\newtheorem{theorem}{Theorem}[section]
\newtheorem{proposition}[theorem]{Proposition}
\newtheorem{lemma}[theorem]{Lemma}
\newtheorem{example}[theorem]{Example}
\DeclareMathOperator{\Vol}{Vol}
\DeclareMathOperator{\interior}{int}
\DeclareMathOperator{\inj}{inj}
\DeclareMathOperator{\length}{L}
\DeclareMathOperator{\support}{spt}
\DeclareMathOperator{\mass}{\mathbf{M}}
\DeclareMathOperator{\diameter}{diam}
\DeclareMathOperator{\Cone}{Cone}
\DeclareMathOperator{\dist}{dist}
\DeclareMathOperator{\centers}{Centers}
\DeclareMathOperator{\faces}{Faces}
\DeclareMathOperator{\radius}{rad}
\DeclareMathOperator{\Exp}{Exp}
\DeclareMathOperator{\Refine}{Ref}
\theoremstyle{definition}
\newtheorem{definition}[theorem]{Definition}
\newtheorem{remark}[theorem]{Remark}
\def\XXint#1#2#3{{\setbox0=\hbox{$#1{#2#3}{\int}$}
    \vcenter{\hbox{$#2#3$}}\kern-.5\wd0}}
\def\YYint#1#2#3{{\setbox0=\hbox{$#1{#2#3}{\int}$}
    \lower1ex\hbox{$#2#3$}\kern-.46\wd0}}
\def\YYYint#1#2#3{{\setbox0=\hbox{$#1{#2#3}{\int}$}
    \lower0.35ex\hbox{$#2#3$}\kern-.48\wd0}}
\def\ZZint#1#2#3{{\setbox0=\hbox{$#1{#2#3}{\int}$}
    \raise1.15ex\hbox{$#2#3$}\kern-.57\wd0}}
\def\ZZZint#1#2#3{{\setbox0=\hbox{$#1{#2#3}{\int}$}
    \raise0.85ex\hbox{$#2#3$}\kern-.53\wd0}}
\title{Parametric Coarea Inequality for 1-cycles}
\author{Bruno Staffa}
\begin{document}
\maketitle

\begin{abstract}
    We prove the Parametric Coarea Inequality for $1$-cycles conjectured by Guth and Liokumovich.
\end{abstract}
%We prove a parametric version of the Coarea Inequality for mod $2$ Lipschitz $1$-cycles on a Riemannian manifold with boundary $M$. The previous was conjectured by Guth and Liokumovich, and is a key ingredient in the author's proof of the Weyl law for the volume spectrum for $1$-cycles.

\tableofcontents

\section{Introduction}

Let $M$ be an $n$-dimensional compact Riemannian manifold with boundary. We consider the space $\mathcal{I}_{k}(M;\mathbb{Z}_{2})$ of flat $k$-chains in $M$ with $\mathbb{Z}_{2}$ coefficients as defined in \cite{Fleming66} and \cite{FedererGMT}[Section~4.2.26], and we denote by $\mass$ the mass functional on that space (which coincides with the $k$-dimensional volume for flat $k$-chains induced by $k$-dimensional submanifolds of $M$). Let $\tau\in\mathcal{I}_{k}(M;\mathbb{Z}_{2})$ be such that $\partial\tau$ is supported in $\Sigma=\partial M$. Notice that $\mass(\partial\tau)$ might be arbitrarily large compared to $\mass(\tau)$. An example is provided in Figure 1, where a piecewise linear flat $1$-chain $\tau$ in the unit square $M$ is represented, with the property that $\partial\tau$ supported in $\partial M$, $\mass(\tau)<\infty$ and $\mass(\partial\tau)=\infty$. 

\begin{figure}[h]
\centering
\includegraphics[scale = 0.4]{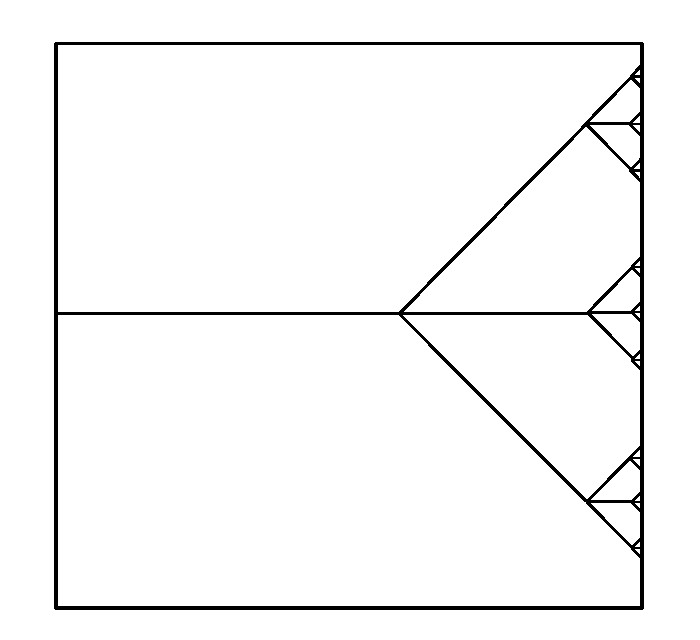}
\caption{}
\end{figure}

We can think of $\tau$ as an infinite tree, whose set of vertices is $V=\bigsqcup_{n\in\mathbb{N}}V_{n}$, each $V_{n}$ has $3^{n}$ elements and each vertex $v\in V_{n}$ has one incoming edge connecting it with a vertex $w\in V_{n-1}$ and three incoming edges connecting it with three different vertices of $V_{n+1}$. The length of the edges connecting $V_{n}$ with $V_{n+1}$ is at most $\frac{\sqrt{2}}{4^{n}}$, which yields $\mass(\tau)\leq\frac{2}{3}+3\sqrt{2}$. But $\partial\tau$ has infinite mass.

Nevertheless, the previous situation can be avoided if we cut a portion of $\tau$ close to $\Sigma$. By the Coarea Inequality applied to the chain $\tau$ and the distance function to $\Sigma$, given $\varepsilon>0$ we can find $s\in (0,\varepsilon)$ such that
\begin{equation}\label{Coarea Inequality}
    \mass(\partial[\tau\llcorner M_{s}])\leq\frac{\mass(\tau)}{\varepsilon}
\end{equation}
where $M_{s}=M\setminus N_{s}\Sigma$ and $N_{s}\Sigma$ is the $s$-tubular neighborhood of $\Sigma$. We will be interested in obtaining an inequality like (\ref{Coarea Inequality}) not for an individual chain $\tau$ but for a continuous family of $k$-chains with boundary supported in $\partial M$. We will regard such families as families of relative $k$-cycles in $(M,\partial M)$. We consider the flat topology on the space $\mathcal{Z}_{k}(M,\partial M;\mathbb{Z}_{2})$ of flat relative $k$-cycles, which is induced by the flat norm
\begin{equation*}
    \mathcal{F}(\tau)=\inf\{\mass(\alpha)+\mass(\beta):\tau=\alpha+\partial\beta,\alpha\in\mathcal{Z}_{k}(M,\partial M;\mathbb{Z}_{2}),\beta\in\mathcal{I}_{k+1}(M,\partial M;\mathbb{Z}_{2})\}.
\end{equation*}
Two relative cycles $\tau$ and $\tau'$ are close in flat topology if there exists a $(k+1)$-chain $\beta$ such that $\partial\beta=\tau-\tau'$ and $\mass(\beta)$ is small. We will consider multiparameter families in $\mathcal{Z}_{k}(M,\partial M;\mathbb{Z}_{2})$, with domain a certain cubical complex $X$ as defined in Section \ref{Section Almgren-Pitts}.

%first say that it's not possible to choose s(x) continuously. Then present the first sentence in the next paragraph

%Given a continuous family $F:X\to\mathcal{Z}_{k}(M,\partial M)$, we may wonder whether it is possible to perturb it a little bit to obtain a new family $F'$ whose mass is not much larger than $\mass(F(x))$ and which verifies $\mass(\partial F'(x))\leq\frac{\mass(F(x))}{\varepsilon}$ for every $x\in X$. But the number $s\in(0,\varepsilon)$ in (\ref{Coarea Inequality}) depends heavily on $\tau$.
Observe that the number $s\in(0,\varepsilon)$ in (\ref{Coarea Inequality}) depends heavily on $\tau$. Moreover, it is not hard to construct a continuous family $F$ of relative $1$-cycles for which it is not possible to choose $s(x)\in(0,\varepsilon)$ continuously such that $\mass(\partial[F(x)\llcorner M_{s(x)}])\leq\frac{\mass(F(x))}{\varepsilon}$. In \cite{GL22} such example was constructed and it is illustrated in Figure 2. The family $F:[0,1]\to\mathcal{Z}_{1}([0,1]^{2},\partial[0,1]^{2};\mathbb{Z}_{2})$ consists of a small but tightly wound spiral moving from left to right on $M=[0,1]^{2}$ with constant speed as $x$ increases. Let $d\in(0,\varepsilon)$ be such that the two vertical lines at distance $d$ from the center of the spiral intersect $F(0)$ at $0$-chains of mass $\frac{\mass(F(0))}{\varepsilon}$. Then for every $x$ either $s(x)<1-(x+d)$ or $s(x)>1-(x-d)$. Moreover at $x=0$ the first must hold and at $x=1$ the second must hold. The previous forces $s(x)$ to be discontinuous.

%A continuous family of relative $1$-cycles $\{z_{x}:x\in[0,1]\}$ in $M=[0,1]^{2}$ is represented. It consists of a small but tightly wound spiral moving from left to right on $M$ with constant speed as $x$ increases. Let $d\in(0,\varepsilon)$ be such that the two vertical lines at distance $d$ from the center of the spiral intersect $z_{0}$ it at a $0$-chain of mass $\frac{\mass(z_{0})}{\varepsilon}$. Then for every $x$ either $s(x)<1-(x+d)$ or $s(x)>1-(x-d)$. Moreover at $x=0$ the first must hold and at $x=1$ the second must hold. The previous forces $s(x)$ to be discontinuous.

\begin{figure}[h]
\centering
\includegraphics[scale=1.1]{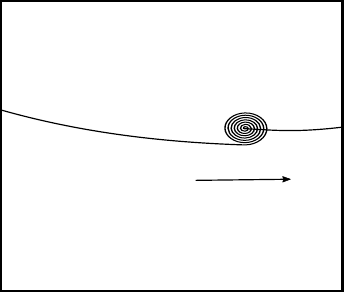}
\caption{}
\end{figure}

Given a continuous family $F:X\to\mathcal{Z}_{k}(M,\partial M)$, we may wonder whether it is possible to perturb it a little bit to obtain a new family $F'$ whose mass is not much larger than $\mass(F(x))$ and which verifies $\mass(\partial F'(x))\leq\frac{\mass(F(x))}{\varepsilon}$ for every $x\in X$. The previous example shows that this can not be done just choosing the coarea inequality cut $s(x)$ continuously, but other methods could work. In \cite{GL22}, Guth and Liokumovich positively aswered this question for families of $1$-cycles in $3$-dimensional manifolds. They studied this problem in the context of proving the Weyl law for the volume spectrum for cycles of codimension higher than $1$. The volume spectrum of a Riemannian manifold is a sequence of numbers $(\omega^{k}_{p}(M))_{p\in\mathbb{N}}$ which correspond to the volumes of certain (possibly singular) $k$-dimensional minimal submanifolds of $M$, $1\leq k\leq n-1$, $\dim(M)=n$. They are constructed via a min-max procedure in the space $\mathcal{Z}_{k}(M,\partial M;\mathbb{Z}_{2})$ of flat relative cycles on $M$ and $\omega_{p}^{k}(M)$ is called the ($k$-dimensional) $p$-width of $M$. For a detailed exposition on Almgren-Pitts Min-Max Theory, the construction of the $p$-widths and the Weyl law see \cite{FA62}, \cite{FA65} \cite{JP81}, \cite{GuthMinMax}, \cite{MN} and \cite{Willmore}. In the 1980's, Gromov \cite{Gromov86} suggested to think of the $p$-widths as non-linear analogs of the eigenvalues of the Laplacian on $M$ (see also \cite{Gromov02} and \cite{Gromov09}). He conjectured that they should satisfy the following Weyl law
\begin{equation}\label{Eq Weyl law}
    \lim_{p\to\infty}\omega_{p}^{k}(M)p^{-\frac{n-k}{n}}=\alpha(n,k)\Vol(M)^{\frac{k}{n}}
\end{equation}
for a certain universal constant $\alpha(n,k)$. The conjecture was resolved by Liokumovich, Marques and Neves for $k=n-1$ and $n$ arbitrary \cite{LMN}, and later by Guth and Liokumovich for $k=1$, $n=3$. In \cite{GL22}, the later two authors showed that the Weyl law for cycles of codimension higher than $1$ can be obtained by proving parametric versions of the Coarea Inequality and the Isoperimetric Inequality. They proved the corresponding versions for families of $1$-cycles in $3$-manifolds, which allowed them to resolve Gromov's conjecture (\ref{Eq Weyl law}) in that case. The following is the Parametric Coarea Inequality that they obtained.

\begin{theorem}[Guth-Liokumovich, Parametric Coarea Inequality for $1$-cycles in $3$-manifolds]
    Let $M$ be a compact Riemannian $3$-manifold with boundary. Fix $\eta>0$. For all $\varepsilon\in (0,\varepsilon_{0})$ and $p\geq p_{0}(\Omega,\varepsilon)$ the following holds. Let $F:X\to\mathcal{Z}_{1}(M,\partial M;\mathbb{Z}_{2})$ be a continuous family of relative $1$-cycles without concentration of mass, $\dim(X)=p$. Then there exists a continuous map $F':X\to\mathcal{Z}_{1}(M_{\varepsilon},\partial M_{\varepsilon};\mathbb{Z}_{2})$ such that
    \begin{enumerate}
        \item $\mathcal{F}(F(x)\llcorner M_{\varepsilon},F'(x))<\eta$ as relative cycles in $(M_{\varepsilon},\partial M_{\varepsilon})$.
        \item $\mass(F'(x))\leq\mass(F(x))+\frac{\mass(F(x))}{\varepsilon\sqrt{p}}+c\Vol_{2}(\partial M)\sqrt{p}$.
        \item $\mass(\partial F'(x))\leq c\Vol_{2}(\partial M)p$.
    \end{enumerate}
\end{theorem}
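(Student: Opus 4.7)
The plan is to adapt the Guth--Liokumovich strategy from \cite{GL22}: on a fine cubical subdivision $X_\delta$ of $X$, choose cutting levels $s_\sigma \in (0,\varepsilon)$ in a piecewise manner, restrict $F$ to $M_{s_\sigma}$ cell by cell, and paste the pieces together by continuously varying isoperimetric fillings on $\partial M_\varepsilon$. The dimension restriction $\dim M = 3$ enters because $\Sigma = \partial M$ is then a surface, so that $0$-cycles on $\Sigma_\varepsilon := \partial M_\varepsilon$ bound $1$-chains of comparable mass via a $2$-dimensional isoperimetric inequality.

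First I would subdivide $X$ into cells of size $\delta \sim 1/\sqrt{p}$, the scale being tuned so that the final estimates balance, and replace $F$ by a simplicial approximation fine enough that the forthcoming slicings make sense. At each vertex $v$ of $X_\delta$, the classical coarea inequality (\ref{Coarea Inequality}) applied to $F(v)$ and $\dist(\cdot,\Sigma)$ produces a level $s_v \in (0,\varepsilon)$ with $\mass(\partial(F(v)\llcorner M_{s_v})) \leq \mass(F(v))/\varepsilon$. I would then construct $F'$ by induction on the skeleta of $X_\delta$: set $F'(v) := F(v)\llcorner M_{s_v}$, closed up to a relative cycle in $(M_\varepsilon,\partial M_\varepsilon)$ by an isoperimetric filling on $\Sigma_\varepsilon$ of the leftover $0$-chain; and extend across each $k$-cube $\sigma$ (whose boundary values are already defined) by the obvious homotopy that restricts $F(x)$ to the annular region between the neighboring cutting levels and closes it up by a continuously varying family of fillings on $\Sigma_\varepsilon$.

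The three estimates come from two contributions at each $x \in \sigma$. The annular piece $\mass(F(x)\llcorner(M_{\min s_v}\setminus M_{\max s_v}))$ is controlled by $O(\delta/\varepsilon)\mass(F(x))$ using the no-concentration-of-mass hypothesis together with an averaged coarea on the thin slab between neighboring cutting levels. The filling pieces on $\Sigma_\varepsilon$ are controlled by the $2$-dimensional isoperimetric inequality applied to $0$-chains of total mass at most $\mass(F(v))/\varepsilon$. Since $x$ is adjacent to $O(p)$ faces of its containing cube, summing these contributions and inserting $\delta \sim 1/\sqrt{p}$ produces the bound (2). Estimate (1) is immediate: the discarded slab $M_\varepsilon \setminus M_{s(x)}$ has width at most $\varepsilon$ and, by the no-concentration hypothesis, flat norm shrinking with $\delta$. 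Estimate (3) holds because $\partial F'(x) \subset \Sigma_\varepsilon$ by construction, with per-cube mass bounded by a geometric constant depending only on $\partial M$ times the number of adjacent faces.

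The main obstacle is the inductive extension: the fillings on $\Sigma_\varepsilon$ must depend continuously on $x$ across the skeleta of $X_\delta$ while retaining isoperimetric mass control on each skeleton, and the combinatorics of the subdivision must match the Almgren--Pitts-style interpolation in such a way that the final estimates close with the correct $\sqrt{p}$ and $p$ dependencies. In effect one must prove a \emph{parametric} $2$-dimensional isoperimetric inequality on $\Sigma_\varepsilon$ and deploy it jointly with the coarea cutting; that combined construction is the technical heart of the argument.
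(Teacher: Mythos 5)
Your outline catches the broad shape of the argument (coarea cuts at vertices, skeleton-by-skeleton extension, closing up boundary $0$-chains on $\Sigma_\varepsilon$), but the mechanism you invoke to make the fillings work is wrong in an essential way. You propose to fill the leftover $0$-chains via a ``$2$-dimensional isoperimetric inequality,'' but no such inequality is available in the form you need, and in fact cannot be: the problematic $0$-chains are of the form $\partial C(F(x);s(x))_{D}\llcorner\partial D$ coming from a $1$-chain crossing the boundary of a triangle $D$ of a fine triangulation of $\Sigma$, and these can have arbitrarily large mass, so any bound of the form ``filling mass $\lesssim$ (length scale) $\times$ ($0$-chain mass)'' is useless. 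The actual device in \cite{GL22} (sketched in Section~\ref{Section n=4}) is a mod-$2$ cancellation that has nothing to do with isoperimetry: the filling $1$-chains $f_{D}(x)$ are cones centered at a vertex of each edge $E$, hence supported inside the $1$-skeleton $\Sigma_{1}$ of a triangulation of width $\rho_{1}\sim p^{-1/2}$, and since they are $\mathbb{Z}_{2}$-chains in a $1$-dimensional complex their total mass is bounded by $\length(\Sigma_{1})\sim\Vol_{2}(\Sigma)p^{1/2}$ \emph{regardless of how large the $0$-chains being filled are}. This cancellation is the sole reason the $n=3$ case closes, and precisely the feature that fails for $n\geq 4$ (where $\Sigma_{1}$ has dimension $\geq 2$), which is the entire motivation for the present paper. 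Replacing it with isoperimetry discards the key idea. Also, the scale $p^{-1/2}$ belongs to the triangulation of $\Sigma$, not to a subdivision of the parameter space $X$: you have conflated two independent parameters.

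Two further gaps. First, ``replace $F$ by a simplicial approximation'' is not the right reduction; what is required is the $\delta$-localization (Theorem~\ref{Thm delta localized approximation}), under which the differences $F(x)-F(y)$ for $x,y$ in a common cell $C$ are supported in a small admissible family of balls, arranged moreover to miss all cutting hypersurfaces $D_{s(v)}\Sigma$. This is what reduces the interpolation across $C$ to interpolating between cuts of a \emph{single} chain $F(y)$ at different levels, modulo a contractible error $e(x)$. Without it you have no structural control over $F'(x)-F'(y)$, only flat-norm smallness, and the cone constructions do not apply. Second, your ``obvious homotopy'' is not obvious and is the technical heart of the theorem: one must rescale the pieces $C(F(y);\Delta s(x_{i}))_{D}$ by coefficients $\tilde{\mu}^{D}_{i}(x)\in[0,1]$ built inductively over the skeleta so that (a) at any $x$ only $O(\dim E(x))\leq p$ of them are strictly between $0$ and $1$, giving $\mass(\partial F'(x))\lesssim p$, and (b) the telescoping sums $\sum_{i}(\tilde{\mu}^{D}_{i}-\tilde{\mu}^{D}_{i+1})\mass(\partial C(\cdot)\llcorner D^{0})$ stay bounded, giving the cone-mass estimate. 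Asserting that a continuously varying family of fillings exists, without this bookkeeping, begs the question.
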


The strategy of Guth and Liokumovich was to first define $F'$ in the $0$-skeleton of $X$ using (\ref{Coarea Inequality}) and then extend it inductively skeleton by skeleton by interpolating on each cell $C$ of $X$ between the values of $F'$ at its vertices. It is crucial in their argument to first perturb $F$ to a $\delta$-localized family (see Section \ref{Section Preliminaries} for the definition and \cite{GL22}[Section~2] and \cite{StaWeyl}[Section~3] for a detailed exposition on $\delta$-localized families), which permits to reduce the problem to interpolating between different Coarea Inequality cuts applied to the same chain $F(x)$ (roughly speaking, the previous means interpolating between $F(x)\llcorner M_{s_{1}}$ and $F(x)\llcorner M_{s_{2}}$ for $s_{1},s_{2}\in (0,\varepsilon)$ satisfying (\ref{Coarea Inequality})). This interpolation happens on $\Sigma=\partial M$, which is $2$-dimensional. They considered a certain triangulation of $\Sigma$ of width $\rho=\frac{1}{\sqrt{p}}<<\varepsilon$, they added certain cones over $\partial F'(x)\llcorner D$ for each cell $D$ of $\Sigma$ and each $x\in X_{0}$ and then they were able to define an interpolation by suitably contracting the differences $F'(v)-F'(w)$ on each $D$ via certain carefully defined rescaling factors. 

Part of the upper bounds for $\mass(F'(x))$ in 
\cite{GL22} come from mod $2$ cancellation: any $1$-chain with $\mathbb{Z}_{2}$-coefficients supported in the codimension $1$ skeleton $\Sigma^{1}$ of $\Sigma$ has mass at most equal to the total length of $\Sigma^{1}$ which is $\sim \frac{\text{Area}(\Sigma)}{\rho^{2}}\rho\sim\sqrt{p}$. Nevertheless, when $\dim(\Sigma)>2$ (which corresponds to $\dim(M)>3)$ such cancellation does not happen because $\Sigma^{1}$ (the codimension $1$ skeleton of $\Sigma$) has dimension larger than $1$. This is the main reason of the restriction to the ambient manifold dimension to be $3$ in the result of Guth and Liokumovich. However, they conjectured parametric versions of the Coarea Inequality (and also of the Isoperimetric Inequality) for families of $k$-cycles in $n$-manifolds for arbitrary $k$ and $n$ (see \cite{GL22}[Conjecture~1.6]). In this paper, we solve their conjecture for $k=1$ and $n$ arbitrary, obtaining the following result. 

\begin{theorem}[Parametric coarea inequality for $1$-cycles]\label{Thm Parametric Coarea}
Let $M$ be a compact $n$-dimensional Riemannian manifold with boundary, $n\geq 4$. Let $\alpha\in(0,1)$. There exist $p_{0}=p_{0}(M,\alpha)$, a constant $c=c(n)$ and a sequence of numbers $\gamma_{p}=\gamma_{p}(n,\alpha)$ converging to $0$ such that the following is true. Let $p\geq p_{0}$ and let $F:X^{p}\to\mathcal{Z}_{1}(M,\partial M;\mathbb{Z}_{2})$ be a family of relative $1$-cycles which is continuous in the flat topology and has no concentration of mass. Then for every $\varepsilon>0$, there exists a continuous family $F':X^{p}\to\mathcal{I}_{1}(M;\mathbb{Z}_{2})$ of absolute chains in $M$ such that
    \begin{enumerate}
        \item $\support(\partial F'(x))\subseteq\partial M$. Therefore $F'$ induces a family of relative $1$-cycles $F':X^{p}\to\mathcal{Z}_{1}(M,\partial M;\mathbb{Z}_{2})$ which is denoted in the same way.
        \item $\mathcal{F}(F(x),F'(x))\leq\varepsilon$ as relative cycles in the space $\mathcal{Z}_{1}(M,\partial M;\mathbb{Z}_{2})$.
        \item $\mass(F'(x)) \leq(1+\gamma_{p})\mass(F(x))+c\Vol_{n-1}(\partial M)p^{\frac{n-2}{n-1}+\alpha}$.
        \item $\mass(\partial F'(x)) \leq c\Vol_{n-1}(\partial M)p^{1+\alpha}$.
    \end{enumerate}
\end{theorem}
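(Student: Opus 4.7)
My plan is to follow the Guth--Liokumovich scheme from \cite{GL22}, performing a $\delta$-localization of $F$ and then extending $F'$ skeleton-by-skeleton in $X^p$, but with their $2$-dimensional interpolation on $\Sigma = \partial M$ replaced by a local cone-interpolation carried out cell-by-cell in a triangulation of $\Sigma$. In dimension three the Guth--Liokumovich bound $\sqrt p$ arose from placing interpolating chains on the $1$-skeleton of a triangulation of $\Sigma$ and exploiting mod-$2$ cancellation there; in dimension $n \geq 4$ the codimension-$1$ skeleton of $\Sigma$ is no longer $1$-dimensional and no such global bound is available, so the exponent $(n-2)/(n-1)$ has to emerge from the local per-cell geometry.

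I would first apply the $\delta$-localization procedure of \cite{GL22}[Section~2] and \cite{StaWeyl}[Section~3] to replace $F$ by a $\delta$-localized family $\tilde F$ with $\mathcal{F}(F(x),\tilde F(x)) < \varepsilon/2$. For $\delta$-localized families, across adjacent cells of $X^p$ the chain $\tilde F$ changes only inside small balls, so the interpolation problem reduces to moving a coarea cut on the same underlying chain between different depths. I would then triangulate $\Sigma$ with mesh size $\rho = p^{-1/(n-1)}$, producing $\sim p$ top-dimensional cells of diameter $\rho$. For each vertex $v \in X_0$ the single-chain Coarea Inequality (\ref{Coarea Inequality}) yields a depth $s(v) \in (0,\varepsilon)$ such that $\mass(\partial[\tilde F(v)\llcorner M_{s(v)}]) \leq \mass(\tilde F(v))/\varepsilon$, and I define $F'(v)$ by capping $\tilde F(v)\llcorner M_{s(v)}$ with a short $1$-chain in $N_{s(v)}\Sigma$ that pushes the inner boundary onto $\Sigma$, at a mass cost $\lesssim s(v)\cdot\mass(\tilde F(v))/\varepsilon$ that is absorbed into the $(1+\gamma_p)\mass(F(x))$ term of item (3) after tuning $s(v)/\varepsilon \to 0$.

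The heart of the argument is the inductive extension of $F'$ over the $j$-cells of $X^p$, $j=1,\ldots,p$. Over a $j$-cell $C$, the interpolation between the values of $F'$ already defined on $\partial C$ is assembled locally on each cell $D$ of the triangulation of $\Sigma$ by coning the difference $0$-chain to an interior center of $D$. Each such cone has $1$-dimensional mass bounded by $\diameter(D) \sim \rho$ times the number of points in the local difference, and the construction keeps the number of boundary points of $F'(x)$ per cell $D$ at $O(1)$; since only $\sim p$ cells of $\Sigma$ are touched per $x$, the per-$x$ mass added is $\lesssim p \cdot \rho = p^{(n-2)/(n-1)}$, matching item (3). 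The $p^\alpha$ slack comes from the polynomial-in-$p$ combinatorics of extending skeleton by skeleton in $X^p$, absorbable provided $p \geq p_0(M,g,\alpha)$. Item (4) follows from the $O(1)$ boundary-point count per cell of $\Sigma$ summed over $\sim p$ cells; item (2) follows because every modification happens inside the shell $N_\varepsilon\Sigma$ of width $\varepsilon$; and item (1) holds because every interpolating cone has boundary on $\Sigma$.

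The main obstacle is producing a \emph{globally continuous and coherent} family on $X^p$ from these cell-by-cell cone interpolations, with mass that does not accumulate beyond $p^{(n-2)/(n-1)+\alpha}$ as one climbs the $p$ skeletal levels of $X^p$. This requires choosing the rescaling factors in each cone interpolation carefully, so that when a higher-dimensional cell $C$ of $X^p$ meets a lower-dimensional boundary cell $C'$, the cones agree on $C'$; and it requires counting the combinatorial overlap between interpolations at different skeletal levels and different cells of $\Sigma$ so that the per-level contributions telescope rather than accumulate. Handling this bookkeeping while preserving flat-continuity on $X^p$, with no recourse to the mod-$2$ cancellation available only when $\dim\Sigma=2$, is where I expect the main technical difficulty to lie.
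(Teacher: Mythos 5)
Your high-level frame — $\delta$-localize, cut at depth $s(v)$ via the single-chain Coarea Inequality, triangulate $\Sigma$ at mesh $\rho\sim p^{-1/(n-1)}$, cone the per-cell differences, and extend skeleton by skeleton — is exactly the Guth--Liokumovich scheme, but the step where you assert that ``the construction keeps the number of boundary points of $F'(x)$ per cell $D$ at $O(1)$'' is precisely the claim that fails in dimension $n\geq 4$, and the paper is organized entirely around repairing it. With a single cut, a piece of the form $C(F(x);\Delta s(x,y))\llcorner D$ has two kinds of boundary: the part in $\interior(D)$, which the coarea cut does control (total mass $\lesssim \mass(F(x))/\varepsilon$ summed over $D$), and the part in $\partial D$, i.e.\ on the codimension-one skeleton of the triangulation of $\Sigma$, which the single coarea cut does \emph{not} control at all. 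When $\dim\Sigma=2$ this was harmless because $\partial D$ is $1$-dimensional and any $1$-chain supported there has mass at most $\length(\Sigma_1)\sim\sqrt p$ by mod-$2$ cancellation; but once $\dim\Sigma\geq3$ the skeleton $\partial D$ has dimension $\geq2$, so you cannot cone or fill $\partial[C(F(x);\Delta s(x,y))\llcorner D]\llcorner\partial D$ with bounded mass, and there is no pigeonhole reason that this $0$-chain has $O(1)$ points per cell. Your proposal doesn't address this.

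The paper's actual mechanism is a hierarchy of $n-2$ coarea cuts $s_1(x),\ldots,s_{n-2}(x)$ at nested scales $\varepsilon_1\gg\varepsilon_2\gg\cdots\gg\varepsilon_{n-2}$, where the $j$-th cut is taken with respect to the distance to an $(n-1)$-dimensional PL set $A_j$ that thickens the codimension-$j$ skeleton $\Sigma^j$ of $\Sigma$. Each additional cut gives coarea control over the intersection of $F(x)$ with the successively lower-dimensional skeleta, which is what makes the per-cell boundary masses of the difference terms $C(\tau;\Delta_{\underline i}\underline s)_{\underline D}$ controllable; the interpolation formula then rescales these pieces cell by cell of $\Sigma^j$ at each level $j$, and the $\tilde\mu$ monotonicity is designed so the cone-mass sums telescope. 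The exponent $p^{(n-2)/(n-1)+\alpha}$ arises from $\rho_1\sim p^{-1/(n-1)}$ together with the small losses $\rho_j/\varepsilon_j=p^{-\alpha'/2}$ at each of the $n-2$ nested scales, not from ``polynomial combinatorics'' of the skeletal extension as you suggest. To make your sketch into a proof you would need to replace your single-cut-plus-cone step with this multi-scale cut construction (or an equivalent device that controls $\partial[C(F(x);\Delta s)\llcorner D]\llcorner\partial D$), and then verify the monotonicity/telescoping of the rescaling coefficients across the skeleta of $X^p$; as written, the proposal lacks the central idea.
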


\begin{remark}
    In fact, we prove Theorem \ref{Thm Parametric Coarea} for almost $1$-Lipschitz triangulable piecewise smooth Riemannian manifolds with boundary $(M,g)$. This is a class of manifolds which includes both the category of compact smooth manifolds with boundary and the category of compact PL manifolds with boundary. It will be the correct set up for our constructions because we are interested in proving results about compact Riemannian manifolds but our methods will involve considering triangulations with almost flat simplices. See a detailed discussion about piecewise smooth Riemannian manifolds in Section \ref{Section piecewise smooth}.
\end{remark}

Theorem \ref{Thm Parametric Coarea} is used in \cite{StaWeyl} to prove the Weyl law for the volume spectrum for $1$-cycles in manifolds of any dimension $n$. As an application of the previous, we obtained generic equidistribution of stationary geodesic nets in $n$-manifolds (see \cite{LiSta}, where the later result is proved assuming the Weyl law for $1$-cycles).

The paper is organized in the following way. In Section \ref{Section n=4}, we prove Theorem \ref{Thm Parametric Coarea} for $n=4$ in order to introduce the main ideas and constructions before getting into the technicalities that arise when the codimension is higher. In Section \ref{Section n arbitrary}, we generalize what was done for $n=4$ to arbitrary $n$. As mentioned above, the main obstruction to extend the Parametric Coarea Inequality for $1$-cycles to manifolds of dimension larger than $3$ is that there is no mod $2$ cancellation in $\Sigma^{1}$, the codimension $1$ skeleton of $\Sigma=\partial M$. This problem is solved by performing a sequence of cuts $s_{1}(x)$,...,$s_{n-2}(x)$ to each $F(x)$, $x\in X_{0}$ using the Coarea Inequality. Each cut is done at a different scale, taking $s_{j}(x)\in(0,\varepsilon_{j})$ with $\varepsilon_{1}>>\varepsilon_{2}>>...>>\varepsilon_{n-2}$. The cuts are done with respect to the distance function to the $A_{j}$, which are certain $(n-1)$-dimensional PL submanifolds of $M$ which ``enlarge'' $\Sigma^{j}$, the codimension $j$ skeleton of $\Sigma$. Those constructions are explained in sections \ref{Section cuts} to \ref{Section AD}. We perform, cell by cell, a very delicate interpolation procedure between the different cuts $C(F(x);s_{1}(x),...,s_{n-2}(x))$ at the different vertices $x\in X_{0}$, making sure to preserve control over $\mass(F'(x))$ and $\mass(\partial F'(x))$. This is explained in Sections \ref{Section Interpolation formula} and \ref{Section Proof of Coarea Ineq}.  To simplify the exposition, in Sections \ref{Section cuts} to \ref{Section Proof of Coarea Ineq} we work with rectangular domains, and later in Section \ref{Extension to triangulable domains} we extend our result to almost $1$-Lipschitz triangulable piecewise smooth Riemannian manifolds with boundary.

\textbf{Acknowledgments.} I am very grateful to Yevgeny Liokumovich for suggesting this problem and for all the valuable conversations we have had while I was working on it. I would also like to express my deep gratitude to the Hausdorff Research Institute for Mathematics, where part of this work was completed during my participation in the Trimester Program: Metric Analysis, funded by the Deutsche Forschungsgemeinschaft (DFG, German Research Foundation) under Germany's Excellence Strategy – EXC-2047/1 – 390685813.

\section{Preliminaries}\label{Section Preliminaries}

\subsection{Piecewise smooth Riemannian manifolds}\label{Section piecewise smooth}

We will need to work with a class of Riemannian manifolds with piecewise smooth boundary which admit a compatible PL structure. For that purpose, we introduce the following definitions.

\begin{definition}\label{Def Euclidean polyhedron}
    A Euclidean polyhedron $P$ is a linearly embedded finite simplicial complex in $\mathbb{R}^{N}$ for some $N\in\mathbb{N}$.
\end{definition}

\begin{definition}
    Let $P$ be a Euclidean polyhedron. A map $\phi:P\to\mathbb{R}^{N}$ is piecewise smooth if $\phi$ is continuous and for each face $F$ of a certain triangulation $T$ of $P$ the map $\phi|_{F}$ is smooth. $\phi$ is a piecewise smooth embedding if it is a topological embedding and $\phi|_{F}$ is a smooth embedding for each face $F$ of $P$.
\end{definition}

\begin{definition}
    An $n$-dimensional Riemannian polyhedron is a is a piecewise smoothly embedded $n$-dimensional polyhedron in $\mathbb{R}^{N}$ (i.e. the image of a Euclidean polyhedron under a piecewise smooth embedding into $\mathbb{R}^{n}$ for some value of $N$) equipped with the piecewise smooth Riemannian metric $g$ induced by the Euclidean metric in $\mathbb{R}^{N}$. % We will assume that all our Riemannian polyhedra are pure. This means that each face $F$ of $M$ is contained in an $n$-dimensional one (and hence we have a good notion of boundary which extends the one for Riemannian manifolds).
\end{definition}

\begin{definition}\label{Def smooth triangulation}
    Let $P$ be a Riemannian polyhedron embedded in $\mathbb{R}^{N}$ and denote the corresponding embedding by $\iota:P\to\mathbb{R}^{N}$. A smooth triangulation of $P$ is a homeomorphism $\tau:P'\to P$ where $P'$ is a Euclidean polyhedron provided with a triangulation $T'$ and $\iota\circ\tau:P'\to\mathbb{R}^{n}$ is a piecewise smooth embedding with respect to the triangulation $T'$. A triangulated Riemannian polyhedron is a pair $(P,T)$ where $P$ is a Riemannian polyhedron and $T$ is the triangulation on $P$ induced by the $T'$ just defined. A refinement of $T$ is the triangulation induced by taking a refinement of the corresponding $T'$.
\end{definition}

\begin{remark}
    Riemannian polyhedra (and hence Euclidean ones) admit different triangulations. For example, we can obtain infinitely many of them from a single one by refinement. 
\end{remark}

%\begin{definition}\label{Def Euclidean polyhedron}
   % A Euclidean polyhedron is a Riemannian polyhedron $(P,g)$ whose faces are linear simplices of $\mathbb{R}^{N}$.
%\end{definition}

\begin{definition}
    A piecewise smooth Riemannian manifold $(M^{n},g)$ is an $n$-dimensional Riemannian polyhedron provided with a topological manifold with boundary structure (i.e. with an atlas of class $C^{0}$). In other words, it is an $n$-dimensional topological submanifold with boundary of $\mathbb{R}^{N}$ (for some $N\in\mathbb{N}$) which admits a smooth triangulation and is equipped with a continuous Riemannian metric $g$ which is smooth on each simplex of that triangulation.
\end{definition}

\begin{definition}\label{Def almost 1-Lip triangulable}
    A piecewise smooth Riemannian manifold $(M,g)$ is almost $1$-Lipchitz triangulable if for every $\varepsilon>0$ there exists a smooth $(1+\varepsilon)$-bilipschitz triangulation $\tau_{\varepsilon}:P_{\varepsilon}\to M$ (here $P_{\varepsilon}$ is a Euclidean polyhedron according to Definition \ref{Def smooth triangulation}).
\end{definition}

\begin{remark}
    By the Nash Embedding Theorem and Theorems 1.1 and 1.2 of \cite{Bowditch}, every smooth compact Riemannian manifold with boundary is almost $1$-Lipschitz triangulable. So is every piecewise linear submanifold of $\mathbb{R}^{N}$. Thus the collection of all almost $1$-Lipschitz triangulable piecewise smooth Riemannian manifolds includes both the category of compact smooth manifolds with boundary and the category of compact PL manifolds with boundary. It will be the correct set up for our constructions because we are interested in proving results about compact Riemannian manifolds but our methods will involve considering triangulations with almost flat simplices.
\end{remark}

\begin{remark}
    Because of the works of Akopyan \cite{Akopyan} and Minemyer \cite{Minemyer}, every Euclidean simplicial complex as defined by Bowditch in \cite{Bowditch} can be piecewise linearly embedded in $\mathbb{R}^{N}$ and hence can be regarded as a Euclidean polyhedron.
\end{remark}

\begin{remark}
    As far as the author knows, the question whether every piecewise smooth Riemannian manifold is almost $1$-Lipschitz triangulable has not been studied. Maybe the methods of Bowditch \cite{Bowditch} also work in this case.
\end{remark}

\begin{remark}
    Notice that in Definition \ref{Def almost 1-Lip triangulable}, $P_{\varepsilon}$ inherits a topological manifold structure from $M$ and hence it is a compact PL manifold with boundary. So the condition of being almost $1$-Lipschitz triangulable can be re-expressed as being $(1+\varepsilon)$-bilipschitz diffeomorphic to a compact PL manifold with boundary for every $\varepsilon>0$. 
\end{remark}

\begin{definition}\label{Def tubular neighborhood}
    Let $(M,g)$ be a piecewise smooth Riemannian manifold. We denote $\Sigma=\partial M$ (which is a subpolyhedron of $M$) and $d$ the metric on $M$ induced by the Riemannian metric $g$. Given two subsets $A,B\subseteq M$ we define
    \begin{equation*}
        \dist(A,B)=\inf\{d(x,y):x\in A,y\in B\}.
    \end{equation*}
    Let $A\subseteq M$ be a subset of $M$. For each $r>0$, we denote
    \begin{align*}
        N_{r}A &=\{x\in M:\dist(x,A)< r\}\\
        D_{r}A & =\{x\in M:\dist(x,A)=r\}.
    \end{align*}
\end{definition}

\begin{definition}
    Given a piecewise smooth Riemannian manifold $(M,g)$, its injectivity radius $\inj(M,g)$ is the minimum among the injectivity radius of its faces.
\end{definition}

%\begin{definition}\label{Def Exp}
    %Given a $\mu$-triangulated Riemannian manifold with corners $(M,g)$, we consider $r_{0}=r_{0}(M,g,\mu)$ and a $\mu$-bilipschitz map $\Exp:\Sigma\times[0,r_{0}]\to M$ such that
    %\begin{enumerate}
        %\item $\Exp(x,0)=x$ for every $x\in\Sigma$ and
        %\item $N_{\mu^{-1}r_{0}}\Sigma\subseteq\Exp(\Sigma\times[0,r_{0}])\subseteq N_{\mu r_{0}}\Sigma$
    %\end{enumerate}
    %as constructed in \cite{GL22}[Section~2.1]. This maps plays the role of the exponential map in a Riemannian manifold with smooth boundary.
%\end{definition}

\subsection{Almgren-Pitts min-max theory}\label{Section Almgren-Pitts}

Let $(M,g)$ be an $n$-dimensional piecewise smooth Riemannian manifold with boundary. Let $\mathcal{Z}_{k}(M,\partial M)$ be the space of mod $2$ flat $k$-cycles on $M$ relative to $\partial M$ and let $\mathcal{I}_{k}(M,\partial M)$ the corresponding space of flat $k$-chains, $0\leq k\leq n$. We also consider the spaces $\mathcal{Z}_{k}(M)$ and $\mathcal{I}_{k}(M)$ of mod $2$ absolute flat cycles and chains on $M$ respectively as defined in \cite{Fleming66} and in \cite{FedererGMT}[Section~4.2.26]. The mass functional on the previous spaces with respect to the metric $g$ is denoted by $\mass$. Notice that for a relative chain $\tau\in\mathcal{I}_{k}(M,\partial M)$,
\begin{equation}\label{Mass of a relative chain}
    \mass(\tau)=\inf\{\mass(\tau'):\tau'\in\mathcal{I}_{k}(M),[\tau']=\tau\}
\end{equation}
where given an absolute $k$-chain $\tau'$, $[\tau']$ denotes its class as a relative $k$-chain in $(M,\partial M)$.
The flat norm associated to $\mass$ is denoted by $\mathcal{F}$, which for absolute $k$-chains is defined as
\begin{equation*}
    \mathcal{F}(\tau)=\inf\{\mass(\alpha)+\mass(\beta):\tau=\alpha+\partial\beta,\alpha\in\mathcal{I}_{k}(M),\beta\in\mathcal{I}_{k+1}(M)\}
\end{equation*}
and the analogous definition holds for relative chains.

\begin{definition}
    We denote $I=[0,1]$ and $I^{d}=[0,1]^{d}$ for every $d\in\mathbb{N}$. In addition, given $q\in\mathbb{N}$ we denote $I(q)$ the complex obtained by dividing $I$ into $q$ equal parts, and $I^{d}(q)=I(q)\times...\times I(q)$ ($d$ times) with the product structure. We say that $I^{d}(q)$ is obtained from $I^{d}$ by performing a $q$-refinement.
\end{definition}

\begin{definition}
    A cubical complex $X$ is a subcomplex of $I^{d}(q)$ for some $d,q\in\mathbb{N}$. Given $q'\in\mathbb{N},$ we denote $X(q')$ the complex obtained by performing a $q'$-refinement of each cell of $X$. Notice that $X(q')$ is a subcomplex of $I^{d}(qq')$.
\end{definition}

\begin{remark}
    The convention in the previous two definitions is different from that in 
    \cite{MN}, \cite{Willmore} and \cite{GL22}.
\end{remark}

\begin{definition}
    Given a $p$-dimensional cubical complex $X$ and $0\leq j\leq p$, we denote by $X_{j}$ the $j$-skeleton of $X$.
\end{definition}

\begin{definition}
    Given a cubical complex $X$ and a cell $C$ of $X$, we denote $V(C)$ the set of vertices of $C$.
\end{definition}

\begin{definition}
    Given a $p$-dimensional cubical complex $X$ and $0\leq j\leq p$, we denote by $\faces_{j}(X)$ the set of $j$-dimensional faces of $X$.
\end{definition}

\begin{definition}
    Let $X$ be a $p$-dimensional cubical complex, let $0\leq j\leq p$ and  let $F:X_{j}\to\mathcal{Z}_{k}(M,\partial M)$ be a continuous map in the flat topology. We say that $F$ is $\varepsilon$-fine if for every cell $C$ of $X$ and every $x,y\in C\cap X_{j}$
    \begin{equation*}
        \mathcal{F}(F(x),F(y))\leq\varepsilon.
    \end{equation*}
    When $j=p$, we say that $F$ has no concentration of mass if
    \begin{equation*}
        \limsup_{r\to 0}\sup_{x\in X}\sup_{p\in M}\{\mass(F(x)\llcorner B(p,r)\}=0.
    \end{equation*}
\end{definition}

\begin{definition}
    We denote by $\lambda_{k}\in H^{n-k}(\mathcal{Z}_{k}(M,\partial M;\mathbb{Z}_{2});\mathbb{Z}_{2})$ the fundamental cohomology class of $\mathcal{Z}_{k}(M,\partial M;\mathbb{Z}_{2})$ (see \cite{GuthMinMax} and \cite{LS}[Section~3]). It is defined by the property that a continuous family $F:X\to\mathcal{Z}_{k}(M,\partial M;\mathbb{Z}_{2})$ is a sweepout of $M$ (i.e. the gluing homomorphism described in \cite{GuthMinMax} is nontrivial) if and only if $F^{*}(\lambda_{k})\neq 0$.
\end{definition}

\begin{definition}
    Let $X$ be a cubical complex. We say that a continuous map $F:X\to\mathcal{Z}_{k}(M,\partial M)$ is a $p$-sweepout of $(M,\partial M)$ by $k$-cycles if $F^{*}(\lambda_{k}^{p})\neq 0$. We denote $\mathcal{P}_{p}^{k}=\mathcal{P}_{p}^{k}(M,\partial M)$ the collection of $p$-sweepouts on $(M,\partial M)$ by $k$-cycles with no concentration of mass.
\end{definition}

\subsection{$\delta$-localized families}
In order to prove the Parametric Coarea Inequality, it is convenient to work with $\delta$-localized families, which were first defined in \cite{GL22}[Section~2]. We proceed to introduce the definition as stated in \cite{StaWeyl}[Definition~3.1]. Here $(M,g)$ is a piecewise smooth Riemannian manifold with boundary.

\begin{definition}\label{Def delta admissible}
    Let $\delta\leq\inj(M,g)$. A $\delta$-admissible family on $M$ is a finite collection $\{U_{i}\}_{i\in I}$ of disjoint open balls such that if $r_{i}=\radius(U_{i})$ then
\begin{equation*}
    \sum_{i\in I}r_{i}<\delta.
\end{equation*}
An $(N,\delta)$-admissible family is a $\delta$-admissible family such that $|I|\leq N$.
\end{definition}

\begin{definition}\label{Def delta localized}
    Let $X$ be a $p$-dimensional cubical complex and $1\leq j\leq p$. A family of chains $F:X_{j}\to\mathcal{I}_{k}(M)$ is said to be $\delta$-localized if for every cell $C$ of $X$ there exists a $\delta$-admissible family $\{U_{i}^{C}\}_{i\in I_{C}}$ such that for every $x,y\in C\cap X_{j}$
    \begin{equation*}
        \support(F(x)-F(y))\subseteq \bigcup_{i\in I_{C}}U_{i}^{C}.
    \end{equation*}
     If each $\delta$-admissible family $\{U_{i}^{C}\}$ is also $(N,\delta)$-admissible, we say that $F$ is $(N,\delta)$-localized. We introduce the same definition for families of relative chains and for families of absolute and relative cycles by replacing $\mathcal{I}_{k}(M)$ by $\mathcal{I}_{k}(M,\partial M)$, $\mathcal{Z}_{k}(M)$ and $\mathcal{Z}_{k}(M,\partial M)$ respectively.
\end{definition}

\begin{remark}
    When $\partial M\neq\emptyset$, by $\support(F(x)-F(y))$ we mean $\support(\overline{F}(x)-\overline{F}(y))$ where $\overline{F}(x),\overline{F}(y)\in\mathcal{I}_{k}(X)$, $[\overline{F}(x)]=F(x)$, $[\overline{F}(y)]=F(y)$ and $F(x)\llcorner\partial M=F(y)\llcorner\partial M=0$ (i.e. $\overline{F}(x)$ and $\overline{F}(y)$ are absolute $k$-chains which represent $F(x)$ and $F(y)$ and have minimal mass, being $\mass(F(x))=\mass(\overline{F}(x))$ and $\mass(F(y))=\mass(\overline{F}(y))$ according to (\ref{Mass of a relative chain})).
\end{remark}

Having this control over how the family $F$ behaves on the cells of $X$ allows to produce new families with desired mass bounds and additional properties. For example, it permits to construct families inductively skeleton by skeleton, starting from a discrete family defined in the $0$-skeleton of $X$, keeping track of quantities like the masses of the chains involved, of its boundaries, $\mathcal{F}(F(x)-F(y))$ for $x,y\in C$ and the supports of the $F(x)$. In order to use these families for our constructions, we need to apply the following result which was proved in \cite{StaWeyl} building on the work \cite{GL22}: every continuous family in the flat topology can be arbitrarily well approximated by a $\delta$-localized one. We state the main approximation theorem, which is proved in \cite{StaWeyl}[Theorem~3.15].

\begin{theorem}\label{Thm delta localized approximation}
    Let $X$ be a $p$-dimensional cubical complex and let $F:X\to\mathcal{Z}_{k}(M,\partial M)$ be a continuous map without concentration of mass. Let $\varepsilon,\delta>0$. Then there exists a continuous map $F':X\to\mathcal{Z}_{k}(M,\partial M)$ without concentration of mass and a fine cubulation of $X$ such that
    \begin{enumerate}
        \item $F'$ is $\delta$-localized (and moreover $(N(k,p),\delta)$-localized for some $N(k,p)\in\mathbb{N}$ depending only on $k$ and $p$).
        \item $\mathcal{F}(F(x),F'(x))\leq\varepsilon$ for every $x\in X$.
        \item If $x\in C$ for some cell $C$ of $X$ then 
        \begin{equation*}
            \mass(F'(x))\leq\max\{\mass(F(v)):v\in V(C)\}+\varepsilon.
        \end{equation*}
        \item \begin{equation*}
    \sup_{x\in X}\sup_{p\in M}\{\mass(F(x)\llcorner B(p,\delta))\}\leq\varepsilon.
\end{equation*}
        \item Given a cell $C$ of $X$, let $\{U_{i}^{C}\}_{i\in I_{C}}$ be the $\delta$-admissible family associated to $C$. Then if $C\subseteq C'$,
        \begin{equation*}
            \bigcup_{i\in I_{C}}U_{i}^{C}\subseteq\bigcup_{i\in I_{C'}}U_{i}^{C'}.
        \end{equation*}
    \end{enumerate}
\end{theorem}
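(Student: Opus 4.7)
The overall plan is a two-stage construction: first pass from the arbitrary continuous family $F$ to a map $F^{0}$ defined on a sufficiently refined cubical subdivision whose values at vertices are polyhedral (and hence admit small localized fillings between neighbours), and then extend $F^{0}$ skeleton by skeleton via carefully chosen fillings whose supports form the prescribed $\delta$-admissible families. By uniform continuity of $F$ on the compact cubical complex $X$, for any $\eta>0$ we can choose a $q$-refinement of $X$ so that $\mathcal{F}(F(x),F(y))<\eta$ whenever $x,y$ lie in a common cell of $X(q)$. The parameter $\eta$ is chosen much smaller than $\varepsilon$ and $\delta$ and will be shrunk several times in the course of the argument.

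At each vertex $v$ of $X(q)$, apply a standard polyhedral approximation (Almgren's deformation theorem or the approximation theorem for flat cycles, suitably adapted to the relative setting using $\mathrm{Exp}$ from Definition~\ref{Def Exp}) to produce a polyhedral relative cycle $F'(v)$ with $\mathcal{F}(F(v),F'(v))<\eta$ and $\mass(F'(v))\leq \mass(F(v))+\eta$. Because each pair $F'(v),F'(w)$ joined by an edge of $X(q)$ is close in flat norm, there is an isoperimetric filling $\beta_{vw}\in\mathcal{I}_{k+1}(M,\partial M)$ of small mass with $\partial\beta_{vw}\equiv F'(v)-F'(w)$ modulo $\partial M$; the essential point is that $\support(\beta_{vw})$ can be covered by a uniformly bounded number of balls or half-balls whose total radius is bounded by a small multiple of $\mathcal{F}(F'(v),F'(w))^{1/(k+1)}$. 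Choosing $\eta$ small enough makes this covering $\delta$-admissible, and provides the family $\{U_i^C\}$ for each edge $C$.

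The inductive extension to higher-dimensional cells proceeds as in the Pitts interpolation theorem: given $F'$ already defined on $\partial C$ for a $j$-cell $C$, build a continuous interpolation inside $C$ by a cone/cut-and-paste construction in the filling chain, always controlling supports inside the $\delta$-admissible family associated to $C$. The nesting property (item 4) is enforced at this stage by the simple expedient of defining the admissible family $\{U_i^C\}$ for a cell $C$ as the union of the admissible families of all its proper faces together with additional balls introduced only if needed to cover new fillings interior to $C$; since the filling on $C$ can be built out of the boundary fillings plus a single interior correction, the number of new balls per cell is bounded by a constant depending only on $k$ and $j\leq p$. The mass bound in item (3) is achieved by writing $F'(x)=F'(v_0)+\gamma_C(x)$ on each cell $C$, where $v_0\in V(C)$ is a chosen base vertex and $\gamma_C(x)$ is a chain of mass $<\eta$ supported in $\bigcup_i U_i^C$, so that $\mass(F'(x))\leq\mass(F'(v_0))+\eta\leq\max_{v\in V(C)}\mass(F(v))+2\eta$, absorbing $2\eta$ into the $\varepsilon$ slack.

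The main obstacle is the simultaneous control of three quantities on each cell: the cardinality of the admissible family (which must be bounded by a universal constant $N(k,p)$), the total radius (which must be $<\delta$), and the mass of $F'(x)$ (which must not accumulate as $x$ ranges over the cell). Naive constructions pick up either a factor depending on the number of cells or a loss of $\sum_v\mass(F'(v))$ instead of $\max_v\mass(F'(v))$. The resolution, following \cite{GL22} and \cite{StaWeyl}, is to keep the interpolation on each cell genuinely $(k+1)$-dimensional — realising $F'(x)-F'(v_0)$ as the boundary of a small-mass filling concentrated in a few balls — and to invoke no concentration of mass together with the isoperimetric inequality to pass the estimate through the induction without a combinatorial blow-up. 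This careful bookkeeping, rather than any single deep ingredient, is where the real work lies.
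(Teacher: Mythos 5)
This theorem is not proved in the paper under review: the author explicitly imports it, writing ``We state the main approximation theorem, which is proved in \cite{StaWeyl}[Theorem~3.15].'' So there is no in-paper proof against which to compare your sketch, and a correct proof would in any case reproduce the argument of \cite{StaWeyl} (which itself builds on \cite{GL22}[Section~2]), not something that belongs in this paper.

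Taking your sketch on its own terms, the decisive gap is the sentence asserting that ``$\support(\beta_{vw})$ can be covered by a uniformly bounded number of balls or half-balls whose total radius is bounded by a small multiple of $\mathcal{F}(F'(v),F'(w))^{1/(k+1)}$.'' This is precisely the content that requires proof, and it is false as stated for a generic isoperimetric filling. The isoperimetric inequality controls $\mass(\beta_{vw})$, not the geometry of $\support(\beta_{vw})$: two relative $k$-cycles at small flat distance can cobound a $(k+1)$-chain of tiny mass whose support has diameter comparable to $\diameter(M)$ (think of two nearly coincident great circles on a large sphere bounding a thin but long annulus). Turning a small-mass filling into one confined to a $\delta$-admissible family is the real work of the $\delta$-localization theorem: in \cite{GL22} and \cite{StaWeyl} it proceeds through discretization, Federer--Fleming-type projections onto a fine grid, repeated coarea cuts, and a bookkeeping scheme that both keeps the number of balls bounded by $N(k,p)$ and makes the families nest; the no-concentration-of-mass hypothesis enters quantitatively in exactly these cutting steps. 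Your sketch invokes this conclusion as if it were a corollary of the ordinary isoperimetric inequality, which bypasses the main difficulty. The remaining steps (polyhedral approximation at vertices, skeleton-by-skeleton extension, the mass bound via $F'(x)=F'(v_0)+\gamma_C(x)$, unioning admissible families of faces to enforce the nesting in item (4)) are of the right flavour but are all downstream of the localized-filling claim, and the nesting step in particular needs a scale hierarchy so that the accumulated total radius over all faces of a top cell stays below $\delta$, which your sketch does not arrange.
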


\subsection{Rectangular complexes}\label{Section rectangular complexes}
\begin{definition}\label{Def k-cube}
    A rectangle $Q$ in $\mathbb{R}^{n}$ is a set of the form
\begin{equation*}
    Q=\{p+v:v\in J_{1}\times...\times J_{n}\}
\end{equation*}
    where $p$ is some point in $\mathbb{R}^{n}$ and $J_{1},...,J_{n}$ are sets such that each $J_{i}$ is either $\{0\}$ or an interval $[0,a_{i}]$ for some $a_{i}>0$. The dimension of such rectangle $Q$ is
    \begin{equation*}
        \dim(Q)=\#\{i:J_{i}\neq\{0\}\}.
    \end{equation*}
\end{definition}

\begin{definition}\label{Def rectangular complex}
    A rectangular complex consists of a collection $T$ of rectangles in $\mathbb{R}^{n}$ such that
    \begin{enumerate}
        \item Each face of a rectangle $Q\in T$ also belongs to $T$.
        \item If $Q_{1},Q_{2}\in T$ have non-empty intersection, then $Q_{1}\cap Q_{2}\in T$.
    \end{enumerate}
    The geometric realization $M$ of $T$ is the following subset of $\mathbb{R}^{n}$
    \begin{equation*}
        M=\bigcup_{Q\in T}Q.
    \end{equation*}
\end{definition}

\begin{definition}
    We say that $M\subseteq\mathbb{R}^{n}$ is a rectangular polyhedron if $M$ is the geometric realization of a rectangular complex as in Definition \ref{Def rectangular complex}. Given a rectangular polyhedron, a rectangular structure on $M$ is a rectangular complex $T$ such that $M$ is the geometric realization of $T$. As the pair $(M,T)$ of a rectangular polyhedron and a rectangular structure on it is uniquely determined by the rectangular complex $T$, we will also call $(M,T)$ a rectangular complex.
\end{definition}

\begin{remark}
    A rectangular polyhedron is a Euclidean polyhedron whose cells are rectangles instead of linear simplices (see Definition \ref{Def Euclidean polyhedron}).
\end{remark}

\begin{remark}
    Notice that a rectangular polyhedron $M$ admits different rectangular structures. For example, we can fix one of such structures and obtain others by subdividing each edge in $q$ equal parts for any $q\in\mathbb{N}$. Such operation is a $q$-refinement of the rectangular structure.
\end{remark}

\begin{definition}\label{Def q-refinement}
    Given a rectangular complex $(M,T)$ and a number $q\in\mathbb{N}$, we denote by $\Refine^{q}T$ the rectangular structure on $M$ obtained by a $q$-refinement of $M$.
\end{definition}

\begin{definition}
    A rectangular domain in $\mathbb{R}^{n}$ is a rectangular polyhedron for which each maximal cell has dimension $n$. 
\end{definition}

\begin{remark}
    Notice that rectangular domains are piecewise smooth Riemannian manifolds with boundary, as each rectangle can be subdivided into linear simplices. The constructions performed to prove the Parametric Coarea Inequality are easier to describe in the presence of a rectangular structure than when we have a triangulation. That is why this result is first proved in detail for rectangular domains. In Section \ref{Extension to triangulable domains} we show how to extend it to almost $1$-Lipschitz triangulable piecewise smooth Riemannian manifolds.
\end{remark}

%\begin{definition}
    %A rectangular complex in $\mathbb{R}^{n}$ is pure if every maximal face has dimension $n$. We extend this definition to rectangular polyhedra. Notice that pure rectangular polyhedra are domains in $\mathbb{R}^{n}$ with Lipschitz boundary (moreover, the boundary of such  $M$ is an $(n-1)$-dimensional rectangular polyhedron).
%\end{definition}

\begin{definition}
    Given two rectangular complexes $(M_{1},T_{1})$ and $(M_{2},T_{2})$, a rectangular map $f:(M_{1},T_{1})\to(M_{2},T_{2})$ is a function $f:M_{1}\to M_{2}$ such that for every $Q_{1}\in T_{1}$, $f(Q_{1})$ is a rectangle $Q_{2}\in T_{2}$ and the map $f|_{Q_{1}}:Q_{1}\to Q_{2}$ is the composition of an orthogonal projection onto a face of $Q_{1}$ and an orthogonal transformation of $\mathbb{R}^{n}$.
\end{definition}

\begin{definition}\label{Def width cubical complex}
    We say that a rectangular complex is uniform if each of its edges has the same length $\rho$. In that case, we say that the complex has width $\rho$.
\end{definition}

\begin{definition}\label{Def width rectangular complex}
    Given a rectangular complex $(M,T)$, we say that it has width at least $\rho$ if each of its edges has length greater or equal to $\rho$.
\end{definition}

\begin{definition}
    Given two rectangular structures $T$ and $T'$ on a polyhedron $M$, we say that $T'$ is a refinement of $T$ if for every $Q'\in T'$ there exists $Q\in T$ such that $Q'\subseteq Q$.
\end{definition}

The following lemma provides for each sufficiently small $\varepsilon>0$ a map $S:M\to M$ with Lipschitz constant very close to $1$, homotopic to the identity and having the following property: given a subcomplex $A$ of $(M,T)$, the $\varepsilon$-tubular neighborhood of $A$ is mapped onto $A$. It is crucial in the proof of the Parametric Coarea Inequality on rectangular domains. All tubular neighborhoods are taken with respect to the $|\cdot|_{\infty}$-metric.
\begin{lemma}\label{Lemma map S}  Let $(M,T)$ be a rectangular complex in $\mathbb{R}^{n}$ of width at least $\rho$. Let $0<\varepsilon<\frac{\rho}{3}$. Then there exists a map $S=S_{(M,T,\varepsilon)}:M\to M$ such that
    \begin{enumerate}
        \item $S$ is $\frac{\rho}{\rho-2\varepsilon}$-Lipschitz.
        \item Given $Q\in T$, $S(N_{\varepsilon}Q)\subseteq Q$ and in particular $S(Q)\subseteq Q$.
        \item There exists a refinement $\tilde{T}$ of $T$ of width at least $\varepsilon$ such that $S:(M,\tilde{T})\to(M,T)$ is a rectangular map.
    \end{enumerate}
    In addition, given $q\in\mathbb{N}$ there exists a refinement $T'$ of $\tilde{T}$ of width at least $\frac{\rho-2\varepsilon}{q}$ such that $S:(M,T')\to (M,\Refine^{q}T)$ is a rectangular map.
\end{lemma}

\begin{proof}
    We will first inductively construct $S$ in $N_{\varepsilon}M_{j}$ (here $M_{j}$ denotes the $j$-skeleton of $M$ with the structure $T$) for each $0\leq j\leq n$, denoting $S_{j}=S|_{N_{\varepsilon}M_{j}}$. For the $0$-skeleton, observe that $N_{\varepsilon}M_{0}$ is the disjoint union of the cubes $B_{\infty}(v,\varepsilon)$ for $v\in M_{0}$. On each cube, set $S$ to be the constant function with image $v$. Suppose $S$ has been defined in $N_{\varepsilon}M_{j-1}$ and let $Q$ be a $j$-face of $(M,T)$. Let
    \begin{equation*}
        Q^{\varepsilon} = \{x\in Q:\dist_{\infty}(x,\partial Q)\geq\varepsilon\}.
    \end{equation*}
    Given $A\subseteq Q$, denote
    \begin{equation*}
        N_{\varepsilon}^{\perp}A  =\{x+v:x\in A,v\perp Q,|v|_{\infty}\leq\varepsilon\}.
    \end{equation*}
    We will assume that $(M,T)$ is uniform with $\rho=2$ in order to avoid introducing extra notation due to the different lengths of the edges of the rectangles in $T$. Notice that $N_{\varepsilon}Q\setminus N_{\varepsilon}M_{j-1}=N_{\varepsilon}^{\perp} Q^{\varepsilon}$, thus it suffices to extend $S$ to $N_{\varepsilon}^{\perp}Q^{\varepsilon}$. In that set, define
    \begin{equation*}
        S_{j}(x+v)=\frac{1}{1-\varepsilon}\cdot_{Q}x.
    \end{equation*}
     We need to check that $S_{j}$ coincides with $S_{j-1}$ in
    \begin{equation*}
        N_{\varepsilon}^{\perp}Q^{\varepsilon}\cap N_{\varepsilon}M_{j-1}=N_{\varepsilon}^{\perp}\partial Q^{\varepsilon}=\{x+w:x\in\partial Q^{\varepsilon},w\perp Q,|w|_{\infty}\leq\varepsilon\}.
    \end{equation*}
    
    If $j=1$, $\partial Q=\{q_{1},q_{2}\}$ and $\partial Q^{\varepsilon}=\{q_{1}',q_{2}'\}$ with $q_{i}'=(1-\varepsilon)\cdot_{Q} q_{i}$ so $S_{j}(q_{i}'+w)=\frac{1}{1-\varepsilon}\cdot_{Q} q_{i}'=q_{i}=S_{j-1}(q_{i'}+w)$ as desired.

    If $j>1$, given $x+w\in N_{\varepsilon}^{\perp}\partial Q^{\varepsilon}$ let $F$ be a $(j-1)$-dimensional face containing $\frac{1}{1-\varepsilon}\cdot_{Q}x$. We can write $x=x'+v$ where $x'\in F^{\varepsilon}$ is the orthogonal projection of $x$ onto $F$ and $v\in F^{\perp}\cap Q$ verifies $|v|_{\infty}\leq\varepsilon$. Therefore
    \begin{equation*}
        S_{j-1}(x+w)=S_{j-1}(x'+(v+w))=\frac{1}{1-\varepsilon}\cdot_{F}x'=\frac{1}{1-\varepsilon}\cdot_{Q}x
    \end{equation*}
    thus $S_{j-1}(x+w)=S_{j}(x+w)$. This allows us to define $S$ in $N_{\varepsilon}M_{n-1}$. Given an $n$-dimensional cube $Q$ of the grid, we have $S(x)=\frac{1}{1-\varepsilon}\cdot_{Q}x$ for each $x\in\partial Q^{\varepsilon}$. We extend $S$ to $Q^{\varepsilon}$ with the same formula and hence we obtain a map with the desired properties.

    Now we construct the structure $\tilde{T}$. For each pair $(Q,F)$ where $Q,F\in T$, $\dim(Q)=n$, $0\leq \dim(F)\leq n$ and $F\subseteq Q$, we consider an $n$-rectangle $\tilde{Q}=\tilde{Q}_{(Q,F)}$. The collection of such $n$-rectangles will be the top cells of $\tilde{T}$. Their definition depends on the dimension of $F$. When $\dim(F)=0$, $F$ is a vertex $v$ of $T$ and $\tilde{Q}=B_{\infty}(v,\varepsilon)\cap Q$. When $1\leq \dim(F)\leq n-1$, $\tilde{Q}=N_{\varepsilon}^{\perp}F^{\varepsilon}\cap Q$. When $\dim(F)=n$, we have $F=Q$ and we set $\tilde{Q}=Q^{\varepsilon}$. It follows that $\tilde{T}$ is a refinement of $T$ of width at least $\varepsilon$ and $S:(M,\tilde{T})\to (M,T)$ is a rectangular map.

    Finally, to obtain $T'$ we subdivide the different cells $\tilde{Q}_{(Q,F)}$ of $\tilde{T}$ in the following way. When $\dim(F)=0$, no subdivision is done. When $1\leq j=\dim(F)\leq n-1$, $\tilde{Q}=N_{\varepsilon}^{\perp} F_{\varepsilon}\cap Q\cong F^{\varepsilon}\times[0,\varepsilon]^{n-j}$ and corresponding cells of $\tilde{T}$ are of the form $F'\times[0,\varepsilon]^{n-j}$ where $F'$ is a top cell of the $q$-refinement of $F^{\varepsilon}$. When $\dim(F)=n$, we perform a $q$-refinement of $Q^{\varepsilon}$.
\end{proof}

\begin{remark}
    When working with not-uniform rectangular complexes $(M,T)$ of width $\rho$, we replace each homotecy $\frac{1}{1-\varepsilon}\cdot_{Q}x$ by the linear map $L:Q_{\varepsilon}\to Q$ which stretches the central part $E^{\varepsilon}$ each edge $E$ of $Q$ of length $\tilde{\rho}$ onto $E$, with a Lipschitz constant of $\frac{\tilde{\rho}}{\tilde{\rho}-2\varepsilon}$. As each $\tilde{\rho}$ is greater or equal to $\rho$, these maps are still $\frac{\rho}{\rho-2\varepsilon}$-Lipschitz.
\end{remark}

\begin{remark}
    Given a cell $Q$ of the cubical domain $M$, the map $S|_{Q}$ can be described as follows: first expand $Q$ by a factor of $\frac{1}{1-\varepsilon}$ and then take the nearest point projection onto $Q$. This description will help us extend Lemma \ref{Lemma map S} to triangulable polyhedra, where we can not make use of the $d_{\infty}$ distance to obtain a nice descriptions of suitable tubular neighborhoods of the faces as in the case of cubical domains. The previous is done in Section \ref{Extension to triangulable domains}.
\end{remark}

\section{Motivation and sketch of the proof when $\dim(M)=4$}\label{Section n=4}

In this section, we will explain the main steps of the proof of the Parametric Coarea Inequality for $1$-cycles in $4$-manifolds (Theorem \ref{Thm Parametric Coarea} for $n=4)$. We will leave some details for the next section, where the argument is generalized to arbitrary $n$-manifolds.

Let $(M,g)$ be an $4$-dimensional smooth Riemannian manifold with boundary and denote $\Sigma=\partial M$ (the case when $M$ is piecewise smooth will be analyzed later). Let $\alpha\in (0,1)$. By Theorem \ref{Thm delta localized approximation}, we can assume that we start from a $\delta$-localized family of absolute cycles $F:X^{p}\to\mathcal{I}_{1}(M)$ with $\support(\partial F(x))\subseteq\Sigma$ for which we do not have an upper bound on $\mass(\partial(F(x)))$. For convenience, we will denote
\begin{equation*}
    \mathcal{I}_{1}(M)_{\Sigma}=\{\eta\in\mathcal{I}_{1}(M):\support(\partial\eta)\subseteq\Sigma\}
\end{equation*}
provided with the flat topology inherited from $\mathcal{I}_{1}(M)$. The goal is to produce a new family $F':X^{p}\to\mathcal{I}_{1}(M)_{\Sigma}$ such that $\mass(F'(x))$ is not too big with respect to $M(F(x))$ and also $\mass(\partial F'(x))$ is bounded. To be precise, we will define for every $p\in\mathbb{N}$ small coefficients
\begin{equation*}
    \varepsilon_{1}>>\rho_{1}>>\varepsilon_{2}>>\rho_{2}>0
\end{equation*}
with $\lim_{p\to\infty}\varepsilon_{1}(p)=0$ and $\rho_{2}>>\delta $, and we will construct $F'$ such that
\begin{itemize}
    \item $\mass(F'(x))\leq \mass(F(x))(1+\beta+\frac{\rho_{1}}{\varepsilon_{1}}+2\frac{\rho_{2}}{\varepsilon_{2}})+c\Vol_{3}(\Sigma)p^{\frac{2}{3}+\alpha}$
    \item $\mass(\partial F'(x))\leq c\Vol_{3}(\Sigma)p^{1+\alpha}$
\end{itemize}
for some $\beta=\beta(p)$ which verifies $\lim_{p\to\infty}\beta(p)=0$. Using the asymptotic behavior of the coefficients $\varepsilon_{l},\rho_{l}$ and $\beta$, the previous will imply Theorem \ref{Thm Parametric Coarea} for $n=4$. We are going to describe the construction of the map $F'$, which will clarify where the coefficients and the powers of $p$ come from. We will start by sketching the construction for $1$-cycles in $3$-manifolds done by Guth and Liokumovich in \cite{GL22}, to later point out the difficulties in extending those constructions when $\dim(M)=4$ and how to overcome them. As it is aimed to be later generalized to higher dimensions, the exposition that follows is slightly different from that in \cite{GL22}. First, using the Coarea Inequality for each $x\in X^{p}_{0}$ we can choose $0< s(x)<\varepsilon_{1}$  so that
\begin{equation}\label{coarea ineq1}
    \mass(F(x)\llcorner D_{s(x)}\Sigma)\leq \frac{\mass(F(x))}{\varepsilon_{1}}
\end{equation}
where according to Definition \ref{Def tubular neighborhood}, given $A\subseteq M$ and $\varepsilon>0$, we denote
\begin{align*}
    N_{\varepsilon}A & =\{x\in M:\dist(x,A)<\varepsilon\}\\
    D_{\varepsilon}A & =\{x\in M:\dist(x,A)=\varepsilon\}.
\end{align*}
We can also assume (by worsening (\ref{coarea ineq1}) slightly) that $D_{s(x)}\Sigma$ does not intersect any ball in the $\delta$-admissible families corresponding to the cells containing $x$.

Let $S_{1}:M\to M$ be a $(1+\beta_{1})$-Lipschitz map which retracts $N_{\varepsilon_{1}}\Sigma$ onto $\Sigma$. This can be constructed by taking $\rho_{0}=\rho_{0}(p)>>\varepsilon_{1}(p)$ less than the injectivity radius of $(M,g)$ (for sufficiently large $p$) and defining $S_{1}$ in the following way. We can identify $N_{\rho_{0}}\Sigma\cong\Sigma\times[0,\rho_{0}]$ where the identification is a $(1+\rho_{0})$-bilipschitz diffeomorphism, and set
\[
S_{1}(y)=\begin{cases}
    (x,\phi(t)) 
    & \text{if }y=(x,t)\in N_{\rho_{0}}\Sigma\\
    y & \text{if }y\notin N_{\rho_{1}}\Sigma
\end{cases}
\]
where
\[
\phi(t)=\begin{cases}
    0 & \text{if }0\leq t\leq\varepsilon_{1}\\
    \frac{\rho_{0}}{\rho_{0}-\varepsilon_{1}}(t-\varepsilon_{1}) & \text{if }\varepsilon_{1}\leq t\leq\rho_{0}
\end{cases}
\]
In this case, we obtain $1+\beta_{1}=\frac{1}{1-\frac{\varepsilon_{1}}{\rho_{0}}}$. After cutting $F(x)$ at $s(x)$ and applying $S_{1}$, we can get a new family $\tilde{F}$ defined in the $0$-skeleton of $X$ such that $\partial\tilde{F}(x)$ is supported in $\Sigma$. To be precise,
\begin{equation*}
    \tilde{F}(x)=C(F(x);s(x)):=S_{1}(F(x)\llcorner(M\setminus N_{s(x)}\Sigma)).
\end{equation*}
Observe that $\tilde{F}:X^{p}_{0}\to\mathcal{I}_{1}(M)_{\Sigma}$ has mass bounded by $(1+\beta_{1})\mass(F(x))$ and boundary mass bounded by $\frac{\mass(F(x))}{\varepsilon_{1}}$. The goal is to extend this map to the whole $X^{p}$ without worsening those bounds too much (i.e. obtaining estimates as the ones stated above). In \cite{GL22}, the extension was done inductively skeleton by skeleton, having to interpolate between the values of $\tilde{F}(x)$ at different vertices $x$ of each cell $C$ of $X$. Observe that as the family is $\delta$-localized, if $x$ and $y$ are the vertices of a $1$-cell $C$; the difference $\tilde{F}(y)-\tilde{F}(x)$ equals some absolute $1$-cycle 
\begin{align*}
    e(y,x) & =C(F(y);s(y))-C(F(x);s(y))\\
        & =S_{1}(F(y)-F(x)\llcorner(M\setminus N_{s(y)}\Sigma))
\end{align*}
supported in a $(1+\beta_{1})\delta$-admissible family $\{\tilde{U}_{i}^{C}\}_{i\in\tilde{I}_{C}}$ associated to $C$ (which is obtained from the family $\{U^{C}_{i}\}_{i\in I_{C}}$ where $F|_{C}$ is localized by removing those elements intersecting $N_{s(y)}\Sigma$ and then applying $S_{1}$) plus something supported in $\Sigma=\partial M$, which to be precise is
\begin{align*}
    C(F(x);\Delta s(x,y)) & =C(F(x);s(y))-C(F(x);s(x))\\
    & = S_{1}(F(x)\llcorner(N_{s(x)}\Sigma\setminus N_{s(y)}\Sigma))
\end{align*}
if $s(x)\geq s(y)$. The former is originated by (possibly) doing different cuts for $x$ and for $y$. The important part to interpolate between $C(F(x);s(x))$ and $C(F(x);s(y))$ is how to contract $C(F(x);\Delta s(x,y))$, as $e(y,x)$ can be contracted radially within each ball of $\{\tilde{U}_{i}^{C}\}_{i\in \tilde{I}_{C}}$ without increasing the mass. For simplicity, denote $\tau=C(F(x);\Delta s(x,y))$. We need to contract the $1$-chain $\tau$ whose boundary has bounded mass (by $2\frac{\mass(F(x))}{\varepsilon_{1}}$) in a way that does not increase $\mass(\tau)$ and $\mass(\partial\tau)$ too much. For that purpose, in \cite{GL22} a triangulation of $\Sigma$ was chosen with width $\rho_{1}=p^{-\frac{1}{2}}$ and then $\tau$ was contracted radially cell by cell. But if we do that directly, $\mass(\partial\tau)$ might become arbitrarily large. In Figure \ref{Figure 3}, the situation where $\Sigma$ has only two simplices $Q_{1}$ and $Q_{2}$ is represented. The upper picture part shows the $1$-cycle $\tau$ supported in $\Sigma$, the black dots correspond to $\partial\tau$ and the red dots to $\tau\llcorner\Sigma^{1}$, where we denote by $\Sigma^{1}$ the codimension-$1$ skeleton of $\Sigma$ (which in this case is $1$-dimensional). The lower picture shows the cycles we see as we contract $\tau\llcorner Q_{2}$ linearly and leave $\tau\llcorner Q_{1}$ fixed. Notice that the mass of the boundary is increased by twice $\mass(\tau)\llcorner\partial Q_{2}$, which might be arbitrarily large.

\begin{figure}[h]\label{Figure 3}
\centering
\includegraphics[scale = 0.7]{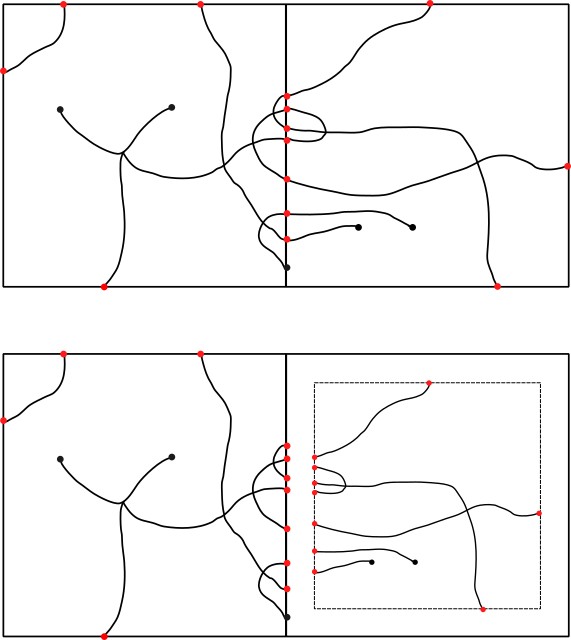}
\caption{}
\end{figure}

The solution proposed in \cite{GL22} was to add for each cell $D$ of $\Sigma$ a $1$-chain $\eta=f_{D}(x)-f_{D}(y)$ supported in $\partial D$ whose boundary cancels most of $\tau\llcorner\partial D$. To be precise, the $1$-chains $f_{D}(x)$ on $\partial D$ are defined as
\begin{equation*}
    f_{D}(x)=\sum_{E\text{ }1-\text{face of }D}\Cone_{q_{E}}(\partial C(F(x);s(x))_{D}\llcorner\interior(E))
\end{equation*}
where for each $1$-face $E$ of $D$ the point $q_{E}$ is chosen to be one of its vertices. We denoted $\partial C(F(x);s(x))_{D}=\partial[C(F(x);s(x))\llcorner D]$. Hence $\partial f_{D}(x)-\partial C(F(x);s(x))_{D}\llcorner\partial D$ is supported in the set of vertices of $D$. Notice that the other part of the boundary of $C(F(x);s(x))\llcorner D$ is $\partial C(F(x);s(x))\llcorner\interior(D)$ which is bounded in mass by $\frac{\mass(F(x))}{\varepsilon_{1}}$ when added over $D$. Then, denoting $q_{D}$ the center of a face  $D$ and setting
\begin{equation}\label{F' in dim 3}
    F'(x)=C(F(x);s(x))+\sum_{D\text{ }2-\text{cell of }\Sigma}\Cone_{q_{D}}(\partial C(F(x);s(x))\llcorner\interior(D))+ f_{D}(x)
\end{equation}
for $x\in X_{0}$, and interpolating between the vertices by radially rescaling their differences, it is possible to extend $F'$ to the $1$-skeleton of $X$. It is key for this argument that as the $f_{D}(x)$ are $1$-chains supported in $\Sigma^{1}$, by mod 2 cancellation the mass of their sum is bounded by the total mass (or length) of $\Sigma_{1}$ which is at most $C(\Sigma)p^{\frac{1}{2}}$ as the width of the triangulation is $\rho_{1}=p^{-\frac{1}{2}}$. But when $\dim(M)\geq 4$ and hence $\dim(\Sigma)\geq 3$, no matter how we define the $f_{D}(x)$ such cancellation of mass does not happen as $\dim(\partial D)\geq 2$ for every $D$ and the $f_{D}(x)$ are $1$-chains. Moreover, $\partial C(F(x);\Delta s(x,y))_{D}\llcorner\partial D$ may not admit a filling in $\partial D$ of bounded mass as $\mass(\partial C(F(x);\Delta s(x,y))_{D}\llcorner\partial D)$ could be arbitrarily large. Therefore, we need to do something else when $M$ has dimension $4$ or higher.

As the problem comes from the fact that $\partial C(F(x);s(x))_{D}\llcorner\partial D$ has unbounded mass, what we will do is to introduce a new cut to obtain control on the later, using the Coarea Inequality again. To be precise, apart from performing the cut at distance $s_{1}(x)$ away from $A_{1}=\Sigma$ with $0< s_{1}(x)< \varepsilon_{1}$, we will perform a certain cut $s_{2}(x)$ away from $A_{2}=\Exp(\Sigma_{2}\times[0,\varepsilon_{1}])$ (here $\Exp:\Sigma\times[0,\overline{\varepsilon}_{1}]\to M$ denotes the exponential map and $\Sigma_{2}$ denotes the $2$-skeleton of $\Sigma$) with $0< s_{2}(x)< \varepsilon_{2}$ such that
\begin{equation}\label{coarea ineq2}
    \mass(F(x)\llcorner D_{s_{2}(x)}A_{2})\leq\frac{\mass(F(x))}{\varepsilon_{2}}.
\end{equation}
Again we can  assume (by worsening (\ref{coarea ineq2}) slightly) that $D_{s_{2}(x)}A_{2}$ does not intersect any ball in the $\delta$-admissible families corresponding to the cells containing $x$. We define
\begin{equation*}
    \overline{C}(F(x);s_{1}(x),s_{2}(x))=F(x)\llcorner[M\setminus(N_{s_{1}(x)}A_{1}\cup N_{s_{2}(x)}A_{2})]
\end{equation*}
and
\begin{align*}
    \overline{C}(F(x);\Delta s_{1}(x,y),s_{2}(x)) & =\overline{C}(F(x);s_{1}(y),s_{2}(x))-\overline{C}(F(x);s_{1}(x),s_{2}(x))\\
    \overline{C}(F(x);s_{1}(y),\Delta s_{2}(x,y)) & =\overline{C}(F(x);s_{1}(y),s_{2}(y))-\overline{C}(F(x);s_{1}(y),s_{2}(x)).
\end{align*}
In general, $\Delta s_{l}(x,y)$ will represent $s_{l}(y)-s_{l}(x)$ for $l=1,2$,  and feeding a certain operator (like $\overline{C}$) at its $s_{l}$ coordinate with $\Delta s_{l}(x,y)$ will mean the evaluation of that operator at $s_{l}(y)$ minus the evaluation at $s_{l}(x)$. Let $S_{2}:M\to M$ a $(1+\beta_{2})$-Lipschitz map which retracts $N_{\varepsilon_{2}}A_{2}$ onto $A_{2}$. The construction of a map with such properties is non trivial in general, and is done in detail for the cases when $M$ is a rectangular domain in Section \ref{Section rectangular complexes} and when $M$ is a Euclidean polyhedron in Section \ref{Extension to triangulable domains}. In both cases, we take advantage of having a triangulation (by rectangular or triangular simplices) to define such maps. For now, we will just assume that such $S_{2}$ exists and continue describing the proof. We define
\begin{equation*}
    C(F(x);s_{1}(x),s_{2}(x))=S_{1}\circ S_{2}(\overline{C}(F(x);s_{1}(x),s_{2}(x))).
\end{equation*}
What we are doing is to first remove two portions of $F(x)$ (the part contained in $N_{s_{1}(x)}A_{1}$ and the part contained in $N_{s_{2}(x)}A_{2}$), then to expand (by applying $S_{2}$) the chain around $A_{2}$ to fill in the gap $N_{s_{2}(x)}A_{2}$ we generated  and finally to expand (by applying $S_{1}$) it around $A_{1}=\Sigma$ to fill in the gap $N_{s_{1}(x)}A_{1}$ we generated . By doing this, we recover a chain whose boundary lies on $\Sigma$ (as after applying $S_{2}$ the boundary is supported in $N_{\varepsilon_{1}}\Sigma$, which is retracted to $\Sigma$ by $S_{1}$) and is bounded in mass. But in addition, adding the second cut $s_{2}(x)$ allows to have control over the mass of the boundary of the chains
\begin{equation*}
    \overline{C}(F(x);\Delta s_{1}(x,y),s_{2}(x))_{D}=\overline{C}(F(x);\Delta s_{1}(x,y),s_{2}(x))\llcorner \Exp(D\times[0,\varepsilon_{1}])
\end{equation*}
for each $3$-cell $D$ of $\Sigma$ because of the following reasons. We can express
\begin{align*}
    N_{\varepsilon_{1}}A_{1}\setminus N_{s_{2}(x)}A_{2} & =\bigcup_{D\text{ }3-\text{cell of }\Sigma}\Exp(D\times[0,\varepsilon_{1}])\setminus N_{s_{2}(x)}A_{2}\\
    & =\bigcup_{D\text{ }3-\text{cell of }\Sigma}\Exp(D\times[0,\varepsilon_{1}])\setminus N_{s_{2}(x)}\Exp(\partial D\times[0,\varepsilon_{1}])
\end{align*}
as a disjoint union of compact domains. The $1$-chain $\overline{C}(F(x);\Delta s_{1}(x,y),s_{2}(x))$ is supported in $N_{\varepsilon_{1}} A_{1}\setminus N_{s_{2}(x)}A_{2}$, therefore we can write
\begin{align*}
    \overline{C}(F(x);\Delta s_{1}(x,y),s_{2}(x)) & = \sum_{D\text{ }3-\text{cell of }\Sigma}\overline{C}(F(x);\Delta s_{1}(x,y),s_{2}(x))\llcorner \Exp(D\times[0,\varepsilon_{1}])\\
    & =\sum_{D\text{ }3-\text{cell of }\Sigma}\overline{C}(F(x);\Delta s_{1}(x,y),s_{2}(x))_{D}
\end{align*}
and the boundary of $\overline{C}(F(x);\Delta s_{1}(x),s_{2}(x))_{D}$ has one part which comes from intersecting $F(x)$ with $N_{s_{2}(x)}A_{2}$ (whose mass adds up to at most $\frac{\mass(F(x))}{\varepsilon_{2}}$ when summed over $D$) and two other parts which come from intersecting $F(x)$ with $N_{s_{1}(x)}A_{1}$ and $N_{s_{1}(y)}A_{1}$ (being the total mass of each of them at most $\frac{\mass(F(x))}{\varepsilon_{1}}$ when added over $D$). These properties are inherited by the chains
\begin{equation*}
    C(F(x);\Delta s_{1}(x,y),s_{2}(x))_{D}=S_{1}\circ S_{2}(\overline{C}(F(x);\Delta s_{1}(x,y),s_{2}(x))_{D})
\end{equation*}
which are supported in $D$.

%Notice that $\partial C(F(x);s_{1}(x),s_{2}(x))$ is supported in $\Sigma$. Moreover,
%\begin{equation*}
    %\partial C(x,s_{1},s_{2})=\partial_{1}C(x;s_{1},s_{2})+\partial_{2}C(x;s_{1},s_{2})
%\end{equation*}
%where
%\begin{align*}
    %\partial_{1}C(x;s_{1},s_{2}) & =S_{1}\circ S_{2}(F(x)\llcorner N_{s_{1}(x)}M)\\
    %\partial_{2}C(x;s_{1},s_{2}) & =S_{1}\circ S_{2}(F(x)\llcorner N_{s_{2}(x)}M)
%\end{align*}
%and hence
%\begin{align*}
    %\mass(\partial_{1}C(x;s_{1},s_{2})) & \leq\frac{\mass(F(x))}{\varepsilon_{1}}\\
    %\mass(\partial_{2}C(x;s_{1},s_{2})) & \leq\frac{\mass(F(x))}{\varepsilon_{2}}.
%\end{align*}
As in the case $n=3$, by $\delta$-localization of $F$ the interpolation between $C(F(x);s_{1}(x),s_{2}(x))$ and $C(F(y);s_{1}(y),s_{2}(y))$ reduces to contracting 
\begin{multline*}
    C(F(x);s_{1}(y),s_{2}(y))-C(F(x);s_{1}(x),s_{2}(x))\\
    =C(F(x);\Delta s_{1}(x,y),s_{2}(y))+C(F(x);s_{1}(x),\Delta s_{2}(x,y)).
\end{multline*}
Using the fact that 
\begin{equation*}
    C(F(x);\Delta s_{1}(x,y),s_{2}(y))=\sum_{D\text{ }3-\text{cell of }\Sigma}C(F(x);\Delta s_{1}(x,y),s_{2}(y))_{D}
\end{equation*}
where each  $C(F(x);\Delta s_{1}(x,y),s_{2}(y))_{D}$ has controlled boundary mass, it is possible to contract $C(F(x);\Delta s_{1}(x,y),s_{2}(y))$ radially cell by cell of $\Sigma$. What happens with $C(F(x);s_{1}(x),\Delta s_{2}(x,y))$?

The first important observation is that by definition
\begin{equation*}
    C(F(x);s_{1}(x),\Delta s_{2}(x,y))=S_{1}\circ S_{2}(\overline{C}(F(x);s_{1}(x),\Delta s_{2}(x,y)))
\end{equation*}
where if we assume for simplicity that $s_{l}(x)\geq s_{l}(y)$ for $l=1,2$,
\begin{equation*}
    \overline{C}(F(x);s_{1}(x),\Delta s_{2}(x,y))=F(x)\llcorner[N_{s_{2}(x)}A_{2}\cap M\setminus (N_{s_{1}(x)}A_{1}\cup N_{s_{2}(y)}A_{2})]
\end{equation*}
which is supported in $N_{s_{2}(x)}A_{2}\subseteq N_{\varepsilon_{2}}A_{2}$. Therefore, as $S_{2}$ retracts $N_{\varepsilon_{2}}A_{2}$ onto $A_{2}$; $S_{2}(\overline{C}(F(x);s_{1}(x),\Delta s_{2}(x,y)))$ is supported in $A_{2}=\Exp(\Sigma_{2}\times[0,\varepsilon_{1}])$ and hence $C(F(x);s_{1}(x),\Delta s_{2}(x,y))$ is supported in $\Sigma_{2}$. Therefore, to be able to extend $F'$ to the $1$-skeleton of $X$ we need to contract the $1$-chain $C(F(x);s_{1}(x),\Delta s_{2}(x,y))$ which lives in the $2$-dimensional PL submanifold $\Sigma_{2}$ of $M$ and has bounded boundary mass. In order to do that, for each $x\in X_{0}$ we can add $1$-chains $f_{E}(x)$ supported in $\partial E$ for each $2$-cell $E$ of $\Sigma_{2}$ and proceed as we described above for the case $\dim(\Sigma)=2$, following \cite{GL22}.

In the discussion so far, we were only concerned about how to interpolate between the values of $F'$ at two different vertices $x$ and $y$. This is sufficient to extend $F'$ to the $1$-skeleton of $X$, but not to higher dimensional skeletons. In general, we will have a $p'$-dimensional cell $C$ of $X$ with $0\leq p'\leq p$ and for each $x\in C$ we will want to write $F'(x)$ as some sort of convex combination of the values of $F'$ at the vertices of $C$. We proceed to motivate how to obtain such an interpolation formula. We start with the case $\dim(M)=3$ by describing what was done in \cite{GL22}. 

Given a $p'$-cell $C$ of $X$, pick an enumeration $x_{1},...,x_{2^{p'}}$ of its vertices such that $s(x_{1})\geq s(x_{2})\geq...\geq s(x_{2^{p'}})$. Given a $2$-cell $D$ of $\Sigma$ and a number $2\leq i\leq 2^{p'}$, we would like to assign a coefficient $\tilde{\mu}_{i}^{D}(x)\in[0,1]$ to each $x\in C$ so that $F'(x)$ is given by
\begin{equation*}
    F'(x)=e(x)+C(F(x_{1});s(x_{1}))+\sum_{D\text{ }2-\text{cell of }\Sigma}\sum_{i=2}^{2^{p'}}\tilde{\mu}^{D}_{i}(x)\cdot C(F(x_{1});\Delta s(x_{i}))_{D}
\end{equation*}
where $\Delta s(x_{i})=s(x_{i})-s(x_{i-1})$ and $e(x)$ is a short family of $1$-cycles supported in $\{\tilde{U}_{i}^{C}\}_{i}$ (and we assume for simplicity that $C(F(x);s(x))$ is transverse to $\Sigma_{1}$ for every $x\in X_{0}$ so that $\sum_{D}C(F(x);\Delta s(x_{i}))_{D}=C(F(x);\Delta s(x_{i}))$ for every $x\in X_{0}$). Notice that $\tilde{F}(x_{i+1})-\tilde{F}(x_{i})=C(F(x_{1});\Delta s(x_{i}))+e_{i}$ for some $e_{i}$ supported in the admissible family $\{\tilde{U}_{i}^{C}\}_{i}$ of the cell $C$. Hence, we can interpolate between the $F(x_{i})$ by rescaling $C(F(x_{1});\Delta s(x_{i}))$ by a factor $\tilde\mu_{i}^{D}(x)\in[0,1]$ at each cell $D$ of $\Sigma$ plus a $1$-chain $e(x)$ supported $\bigcup_{i}\tilde{U}_{i}^{C}$. That is where the formula above came from. The problem with this definition is that $\mass(\partial F'(x))$ is unbounded, as already the boundary of $C(F(x_{1});\Delta s(x_{i}))_{D}$ may have unbounded mass for each fixed value of $i$ as discussed before. To overcome this difficulty, the solution proposed in \cite{GL22} was to add cones over $\partial C(F(x_{1});\Delta s(x_{i}))_{D}$ as in (\ref{F' in dim 3}). Just taking the cone over $\partial C(F(x_{1});\Delta s(x_{i}))\llcorner D$ centered at the central point $q_{D}$ of $D$ does not work as $\partial C(F(x_{1});\Delta s(x_{i})_{D}\llcorner\partial D$ may have arbitrarily large mass. So we need to give a different treatment to the points of $\partial C(F(x_{1});\Delta s(x_{i})_{D}$ in $\interior(D)$ than to those in $\partial D$. For the first ones, we will take a cone centered at the central point $q_{D}$ of $D$, and for the second ones the center will be at some vertex $q_{E}$ of the edge $E$ in whose interior the point lies (which corresponds exactly to the definition of the $f_{D}(x)$ given above). This motivates the following definitions for each $x\in X_{0}$:
\begin{multline*}
    A(F(x);s(x))=C(F(x);s(x))+\sum_{D\text{ }2-\text{cell of }\Sigma}\bigg[\Cone_{q_{D}}(\partial C(F(x);s(x))\llcorner\interior(D))\\
    +\sum_{E\text{ }1-\text{face of }D}\Cone_{q_{E}}(\partial C(F(x);s(x))_{D}\llcorner\interior(E))\bigg]
\end{multline*}
and
\begin{multline*}
    A(F(x);\Delta s(x_{i}))_{D}=C(F(x);\Delta s(x_{i}))_{D}+\Cone_{q_{D}}(\partial C(F(x);\Delta s(x_{i}))_{D}\llcorner\interior(D))\\
    +\sum_{E\text{ }1-\text{face of }D}\Cone_{q_{E}}(\partial C(F(x);\Delta s(x_{i}))_{D}\llcorner\interior(E)).
\end{multline*}
Notice that
\begin{equation*}
    \sum_{D\text{ }2-\text{cell of }\Sigma}A(F(x);\Delta s(x_{i}))_{D}=A(F(x);\Delta s(x_{i})).
\end{equation*}
In addition,
\begin{equation*}
    \support(\partial A(F(x);\Delta s(x_{i}))_{D})\subseteq \{q_{D}\}\cup V(D)
\end{equation*}
where $V(D)$ denotes the set of vertices of $D$. Therefore, by adding the previous cones, we are ``transferring'' the boundary of $C(F(x);\Delta s_{i}(x))_{D}$ to  a $0$-chain supported in $\{q_{D}\}\cup V(D)$. The price that we have to pay for that is adding the extra mass of those cones. But observe that
\begin{align*}
    \sum_{D}\mass(\Cone_{q_{D}}(\partial C(F(x);s(x))_{D}\llcorner\interior(D))) & \leq \sum_{D}\rho_{1}\mass(\partial C(F(x);s(x))\llcorner\interior(D))\\
    & \leq \rho_{1}\mass(\partial C(F(x);s(x)))\\
    & \leq \rho_{1}\frac{\mass(F(x))}{\varepsilon_{1}}
\end{align*}
and that as $\support(\Cone_{q_{E}}(\partial C(F(x);s(x)))_{D}\llcorner\interior(E)))\subseteq E\subseteq \Sigma_{1}$, by mod $2$ cancellation
\begin{equation*}
    \mass\bigg(\sum_{E}\Cone_{q_{E}}(\partial C(F(x);s(x)))_{D}\llcorner\interior(E))\bigg)\leq\length(\Sigma_{1})\leq C(\Sigma)p^{\frac{1}{2}}
\end{equation*}
using the fact that $\rho_{1}=p^{-\frac{1}{2}}$. This leads to the following formula for $F'$ at a cell $C$
\begin{equation*}
    F'(x)=e(x)+A(F(x_{1});s(x_{1}))+\sum_{D\text{ }2-\text{cell of }\Sigma}\sum_{i=2}^{q}\tilde{\mu}^{D}_{i}(x)\cdot A(F(x_{1});\Delta s(x_{i}))_{D}.
\end{equation*}

In \cite{GL22} it is shown that the coefficients $\tilde{\mu}^{D}_{i}(x)$ can be constructed inductively skeleton by skeleton so that $F'$ is given by the previous formula along each cell $C$ of $X$ and it holds
\begin{enumerate}
    \item $\mass(F'(x))\leq \mass(F(x))(1+\beta_{1}+\frac{\rho_{1}}{\varepsilon_{1}})+C(\Sigma)p^{\frac{1}{2}}$
    \item $\mass(\partial F'(x))\leq C(\Sigma)p$.
\end{enumerate}

\begin{remark}
    The factor $(1+\beta_{1})$ comes from applying the $(1+\beta_{1})$-Lipschitz map $S_{1}$. $\mass(F(x))\frac{\rho_{1}}{\varepsilon_{1}}$ is contributed by the cones supported in the $2$-faces $D$ of $\Sigma$ and $C(\Sigma)p^{\frac{1}{2}}$ by the cones supported in the $1$-faces $E$ of $\Sigma$. The upper bound from $\mass(\partial F'(x))$ comes from the fact that it is supported in the $0$-skeleton of $\Sigma$ union the set of centers of its $2$-faces (plus certain rescaled copies of portions of that set, as we will see later).
\end{remark}

We now want to do something similar for $\dim(M)=4$. Recall that we have $\varepsilon_{1}>>\rho_{1}>>\varepsilon_{2}>>\rho_{2}$. We denote
\begin{itemize}
    \item $\Sigma^{1}=\Sigma$ provided with a triangulation $T_{1}$ of width $\rho_{1}$.
    \item $\Sigma^{2}$ the $2$-skeleton of $(\Sigma^{1},T_{1})$. We provide it with a finer triangulation $T_{2}$ of width $\rho_{2}$.
    \item $\Sigma^{3}$ the $1$-skeleton of $(\Sigma^{2},T_{2})$ with the inherited triangulation $T_{2}$ of width $\rho_{2}$.
    \item $\Sigma^{4}$ the $0$-skeleton of $(\Sigma^{2},T_{2})$.
    \item $\mathcal{F}^{j}$ the set of top dimensional cells of $\Sigma^{j}$.
\end{itemize}
Notice that $\dim(\Sigma^{j})=4-j$ and that if $\interior(\Sigma^{j})$ denotes the union of the interiors of the top-dimensional faces of $\Sigma^{j}$ then
\begin{equation*}
    \Sigma=\bigcup_{j=1}^{4}\interior(\Sigma^{j})
\end{equation*}
where the union is disjoint.

For $x\in X_{0}$, we will have
\begin{equation*}
    F'(x)=C(F(x);s_{1}(x),s_{2}(x))+\text{ certain cones}
\end{equation*}
and then $F'$ will be extended skeleton by skeleton by interpolating its values on the vertices of different cells. Thus we will be interested in interpolating between chains of the form $C(F(x);s_{1}(x),s_{2}(x))$. Using the fact that $F$ is $\delta$-localized, it will be enough to know how to interpolate between $C(\tau;s_{1},s_{2})$ for different values of $s_{1}$ and $s_{2}$ and $\tau\in\mathcal{I}_{1}(M)_{\Sigma}$ given (we will apply that for $\tau=F(x)$ for $x$ a vertex of a cell $C$).

Let us consider a fixed $\tau\in\mathcal{I}_{1}(M)_{\Sigma}$. Two sequences $s_{1}^{1}\geq s_{1}^{2}\geq ...\geq s_{1}^{q}$ and $s_{2}^{1}\geq s_{2}^{2}\geq...\geq s_{2}^{q}$ are said to be sequences of admissible cuts for $\tau$ if
\begin{equation*}
    \mass(\tau\llcorner D_{s_{1}^{i}}A_{1})\leq\frac{\mass(\tau)}{\varepsilon_{1}}
\end{equation*}
for every $1\leq i\leq q$ and
\begin{equation*}
    \mass(\tau\llcorner D_{s_{2}^{j}}A_{2})\leq\frac{\mass(\tau)}{\varepsilon_{2}}
\end{equation*}
for every $1\leq j\leq q$. We will denote $\underline{s}=(s_{i}^{j})_{\substack{1\leq i\leq q \\ 1\leq j\leq q}}$. Fix an admissible sequence of cuts $\underline{s}$ for $\tau$. 

\begin{definition}
    Let $1\leq i,j\leq q$. For $i\geq 2$, we denote
\begin{equation*}
    C(\tau;\Delta_{i}s_{1},s_{2}^{j})=C(\tau;s_{1}^{i},s_{2}^{j})-C(\tau;s_{1}^{i-1},s_{2}^{j}).
\end{equation*}
For $j\geq 2$, let
\begin{equation*}
    C(\tau;s_{1}^{i},\Delta_{j}s_{2})=C(\tau;s_{1}^{i},s_{2}^{j})-C(\tau;s_{1}^{i},s_{2}^{j-1}).
\end{equation*}
For $i,j\geq 2$ we set
\begin{align*}
    C(\tau;\Delta_{i}s_{1},\Delta_{j}s_{2}) & =C(\tau;\Delta_{i}s_{1},s_{2}^{j})-C(\tau;\Delta_{i}s_{1},s_{2}^{j-1})\\
    & =C(\tau;s_{1}^{i},\Delta_{j}s_{2})-C(\tau;s_{1}^{i-1},\Delta_{j}s_{2}).
\end{align*}
\end{definition}

Observe that given $1\leq i_{0},i_{1}\leq q$,
\begin{multline*}
    C(\tau;s_{1}^{i_{0}},s_{2}^{j_{0}})=C(\tau;s_{1}^{1},s_{2}^{1})+\sum_{j=2}^{j_{0}}C(\tau;s_{1}^{1},\Delta_{j}s_{2})\\
    +\sum_{i=2}^{i_{0}}\bigg[C(\tau;\Delta_{i}s_{1},s_{2}^{1})+\sum_{j=2}^{j_{0}}C(\tau;\Delta_{i}s_{1},\Delta_{j}s_{2})\bigg]
\end{multline*}
therefore the idea for the interpolation will be to rescale the summands of the form $C(\tau;s_{2}^{1},\Delta_{j}s_{2})$, $C(\tau;\Delta_{i}s_{1},s_{2}^{1})$ and $C(\tau;\Delta_{i}s_{1},\Delta_{j}s_{2})$ by different factors. Notice that $\support(C(\tau;\Delta_{i}s_{1},s_{2}^{j}))\subseteq \Sigma^{1}$ and $\support(C(\tau;s_{1}^{i},\Delta_{j}s_{2})),\support(C(\tau;\Delta_{i}s_{1},\Delta_{j}s_{2}))\subseteq \Sigma^{2}$. Therefore, we will contract them cell by cell of $\Sigma^{j}$, $j=1$ and $2$ respectively. In order to do that, we need the following definitions.

\begin{definition}\label{Definition C(s1s2)_D}
    Given a top-dimensional cell $D$ of $\Sigma^{1}$, we denote
    \begin{equation*}
        \overline{C}(\tau;\Delta_{i}s_{1},s_{2}^{1})_{D}=\overline{C}(\tau;\Delta_{i}s_{1},s_{2}^{1})\llcorner \Exp(D\times[0,\varepsilon_{1}])
    \end{equation*}
    which has controlled boundary mass as previously discussed. We let
    \begin{equation*}
        C(\tau;\Delta_{i}s_{1},s_{2}^{1})_{D}=S_{1}\circ S_{2}(\overline{C}(\tau;\Delta_{i}s_{1},s_{2}^{1})_{D})
    \end{equation*}
    which is supported in $D$. Additionally,
    \begin{equation*}
        \sum_{D\text{ top cell of }\Sigma^{1}}C(\Delta_{i}s_{1},s_{2}^{1})_{D}=C(\tau;\Delta_{i}s_{1},s_{2}^{1}).
    \end{equation*}
\end{definition}

\begin{definition}
    For each (closed) top cell $E$ of $\Sigma^{2}$, we consider a Borel set $\tilde{E}$ satisfying $\interior(E)\subseteq \tilde{E}\subseteq E$ such that the $\tilde{E}$ are disjoint and
    \begin{equation*}
        \bigcup_{E\in\mathcal{F}^{2}}\tilde{E}=\Sigma^{2}.
    \end{equation*}
\end{definition}

\begin{definition}
    Given a top-dimensional cell $E$ of $\Sigma^{2}$, we denote
    \begin{equation*}
        C(\tau;s_{1}^{1},\Delta_{j}s_{2})_{E}=C(\tau;s_{1}^{1},\Delta_{j}s_{2})\llcorner \tilde{E}
    \end{equation*}
    which is supported in $E$. It holds
    \begin{equation*}
        \sum_{E\in\mathcal{F}^{2}}C(\tau;s_{1}^{1},\Delta_{j}s_{2})_{E}=C(\tau;s_{1}^{1},\Delta_{j}s_{2}).
    \end{equation*}
\end{definition}

Regarding the terms of the form $C(\tau;\Delta_{i}s_{1},\Delta_{j}s_{2})$, we want to split them among cells of $\Sigma^{1}$ and of $\Sigma^{2}$ that support them. This can be done naturally by extending the previous two definitions.

\begin{definition}
    Given a top-dimensional cell $D$ of $\Sigma^{1}$, we denote
    \begin{equation*}
        \overline{C}(\tau;\Delta_{i}s_{1},\Delta_{j}s_{2})_{D}=\overline{C}(\tau;\Delta_{i}s_{1},\Delta_{j}s_{2})\llcorner \Exp(D\times[0,\varepsilon_{1}])
    \end{equation*}
    which has controlled boundary mass as previously discussed. We let
    \begin{equation*}
        C(\tau;\Delta_{i}s_{1},\Delta_{j}s_{2})_{D}=S_{1}\circ S_{2}(\overline{C}(\tau;\Delta_{i}s_{1},\Delta_{j}s_{2})_{D})
    \end{equation*}
    which is supported in $D$. Additionally,
    \begin{equation*}
        \sum_{D\in\mathcal{F}^{1}}C(\Delta_{i}s_{1},\Delta_{j}s_{2})_{D}=C(\tau;\Delta_{i}s_{1},\Delta_{j}s_{2}).
    \end{equation*}
\end{definition}

\begin{definition}
    Given top dimensional cells $D$ of $\Sigma^{1}$ and $E$ of $\Sigma^{2}$, we define
    \begin{equation*}
        C(\tau;\Delta_{i}s_{1},\Delta_{j}s_{2})_{DE}=C(\tau;\Delta_{i}s_{1},\Delta_{j}s_{2})_{D}\llcorner\tilde{E}
    \end{equation*}
    which is supported in $D\cap E$. It follows
    \begin{equation*}
        \sum_{E\text{ top cell of } \Sigma^{2}}C(\tau;\Delta_{i}s_{1},\Delta_{j}s_{2})_{DE}=C(\tau;\Delta_{i}s_{1},\Delta_{j}s_{2})_{D}
    \end{equation*}
    and hence
    \begin{equation*}
        \sum_{D\text{ top cell of }\Sigma^{1}}\sum_{E\text{ top cell of } \Sigma^{2}}C(\tau;\Delta_{i}s_{1},\Delta_{j}s_{2})_{DE}=C(\tau;\Delta_{i}s_{1},\Delta_{j}s_{2}).
    \end{equation*}
\end{definition}

The contraction of the chain $C(\tau;\Delta_{i}s_{1},\Delta_{j}s_{2})_{DE}$ will come from composing two linear rescalings: one homotecy which affects the whole $3$-cell $D$ centered at $q_{D}$ and another one which only affects $E$ and is centered at $q_{E}$. Therefore, to interpolate we will need the following rescaling factors for each $2\leq i,j\leq q$, $D$ top cell of $\Sigma^{1}$ and $E$ top cell of $\Sigma^{2}$: $\tilde{\mu}^{D}_{i}$, $\tilde{\mu}^{E}_{j}$ and $\tilde{\mu}^{DE}_{ij}$. For short, we denote $\tilde{\mu}$ the collection of such coefficients, and the $1$-cycle they induce is
\begin{multline}\label{Interp formula n=4}
    C(\tau;\underline{s},\tilde{\mu})=C(\tau;s_{1}^{1},s_{2}^{1})+\sum_{j=2}^{q}\sum_{E\in\mathcal{F}^{2}}\tilde{\mu}^{E}_{j}\cdot C(\tau;s_{1}^{1},\Delta_{j}s_{2})_{E}\\
    +\sum_{i=2}^{q}\sum_{D\in\mathcal{F}^{1}}\tilde{\mu}^{D}_{i}\cdot\bigg[C(\tau;\Delta_{i}s_{1},s_{2}^{1})_{D}+\sum_{j=2}^{n-2}\sum_{E\in\mathcal{F}^{2}}\tilde{\mu}^{DE}_{ij}\cdot C(\tau;\Delta_{i} s_{1},\Delta_{j}s_{2})_{DE} \bigg]
\end{multline}
In the previous, if $\support(\eta)\subseteq D$ then $\tilde{\mu}^{D}_{i}\cdot\eta$ is the rescaling of $\eta$ by a factor of $\tilde{\mu}^{D}_{i}$ centered at $q_{D}$, and similarly for $\tilde{\mu}^{E}_{j}\cdot\eta$ and $\tilde{\mu}^{DE}_{ij}\cdot\eta$ but in the later two cases the homotecy is centered at $q_{E}$ and $\eta$ must be supported in $E$. Recall that $\mathcal{F}^{j}$ denotes the set of top dimensional cells of $\Sigma^{j}$ for $j=1,2$.

\begin{example}
If we set the $\tilde{\mu}$ coefficients to be $1$ if and only if 
 $i\leq i_{0}$ and $j\leq j_{0}$ and $0$ otherwise, then $C(\tau;\underline{s},\tilde{\mu})=C(\tau;s_{1}^{i_{0}},s_{2}^{j_{0}})$.
\end{example}

Observe that as $S_{1}\circ S_{2}$ is $(1+\beta)$-Lipschitz (here $1+\beta=(1+\beta_{1})(1+\beta_{2})$) and the $s_{1}^{i}$ and the $s_{2}^{j}$ are in decreasing order, $\mass(C(\tau;\underline{s},\tilde{\mu}))\leq (1+\beta)\mass(\tau)$ but the mass of $\partial C(\tau;\underline{s},\tilde{\mu})$ can be very large. For example, if we set $\tilde{\mu}^{E}_{j},\tilde{\mu}^{DE}_{ij}=0$ for every $2\leq i,j\leq q$, and we take $\tilde{\mu}^{D}_{i}$ strictly decreasing in $i$ and independent of $D$, as a priori there is no cancellation we could get $\mass(\partial C(\tau;\underline{s},\tilde{\mu}))$ to be as large as
\begin{equation*}
    \sum_{i=1}^{q}\mass(\partial C(\tau;s_{1}^{i},s_{2}^{1})\llcorner\interior(\Sigma^{1}))
\end{equation*}
which might be of the order of $2^{p}\frac{\mass(\tau)}{\varepsilon_{1}}$ when $q=2^{p}$ is the number of vertices in a top cell of $X$. This problem already appears when $\dim(M)=3$, and it is sorted out in \cite{GL22} by adding cones over the boundaries of the chains $C(\tau;\Delta_{i}s_{1})_{D}$ as discussed before. We will extend that to the case $\dim(M)=4$. We need the following definition.

\begin{definition}
    Let $\eta\in\mathcal{I}_{1}(M)_{\Sigma}$. For $l\in\{1,2,3\}$ define
    \begin{equation*}
        \Cone_{l}(\eta)=\sum_{E\in\mathcal{F}^{l}}\Cone_{q_{E}}(\partial\eta\llcorner E^{0}).
    \end{equation*}
    where $E^{0}=\interior(E)$. We also let
    \begin{equation*}
        \Cone(\eta)=\sum_{l=1}^{3}\Cone_{l}(\eta).
    \end{equation*}
    and
    \begin{equation*}
        A(\eta)=\eta+\Cone(\eta).
    \end{equation*}
\end{definition}

\begin{remark}
    Observe that as
    \begin{equation*}
        \partial\eta=\sum_{l=1}^{4}\sum_{E\text{ top face of }\Sigma^{l}}\partial\eta\llcorner E^{0}
    \end{equation*}
    it follows
    \begin{equation*}
        \partial A(\eta)\subseteq \mathcal{A}=\bigcup_{l=1}^{4}\bigcup_{E\text{ top face of }\Sigma^{l}}\{q_{E}\}.
    \end{equation*}
    Hence adding $\Cone(\eta)$ has the effect of transferring $\partial\eta$ to the fixed discrete set $\mathcal{A}$.
\end{remark}

We denote $A(\tau;s_{1},s_{2})=A(C(\tau;s_{1},s_{2}))$ and similarly for the related chains. The interpolation formula which will allow us to have the desired mass bounds is the following.

\begin{definition} We denote
   \begin{multline*}
    A(\tau;\underline{s},\tilde{\mu})=A(\tau;s_{1}^{1},s_{2}^{1})+\sum_{j=2}^{q}\sum_{E\in\mathcal{F}^{2}}\tilde{\mu}^{E}_{j}\cdot A(\tau;s_{1}^{1},\Delta_{j}s_{2})_{E}\\
    +\sum_{i=2}^{q}\sum_{D\in\mathcal{F}^{1}}\tilde{\mu}^{D}_{i}\cdot\bigg[A(\tau;\Delta_{i}s_{1},s_{2}^{1})_{D}+\sum_{j=2}^{q}\sum_{E\in\mathcal{F}^{2}}\tilde{\mu}^{DE}_{ij}\cdot A(\tau;\Delta_{i} s_{1},\Delta_{j}s_{2})_{DE} \bigg]
\end{multline*}
and for $1\leq l\leq 3$ we define $\Cone_{l}(\tau;\underline{s},\tilde{\mu})$ to be the $1$-chain obtained by replacing $A$ by $\Cone_{l}$ in the right hand side of the previous expression.
\end{definition}

However, if we want to have control over $\mass(A(\tau;\underline{s},\tilde{\mu}))$
and $\mass(\partial A(\tau;\underline{s},\tilde{\mu}))$, we need to choose $\tilde{\mu}$ carefully. In order to control $\mass(\partial A(\tau;\underline{s},\tilde{\mu}))$, we will need a bound in the number of coefficients $\tilde{\mu}$ which are not $0$ or $1$, because $\mass(\partial A(\tau;\underline{s},\tilde{\mu}))$ is supported in $\mathcal{A}$
%\begin{equation*}
    %\mathcal{A}=\bigcup_{D\text{ top cell of }\Sigma^{1}}\{q_{D}\}\cup\bigcup_{E\text{ top cell of }\Sigma^{2}}\{q_{E}\}\cup\Sigma^{4}
%\end{equation*}
union certain scaled copies of $\mathcal{A}\cap D$ for  the cells $D$ of $\Sigma$ having some of the $\tilde{\mu}$ coefficients associated to them different from $0$ and $1$  (for example, if $\tilde{\mu}^{E}_{j}\in(0,1)$ for some $E\in\mathcal{F}^{2}$ we need to add $\tilde{\mu}^{E}_{j}\cdot E\cap\Sigma^{4}$). That will also help to have an upper bound for the mass of $\Cone_{3}(\tau;\underline{s},\tilde{\mu})$ as it is supported in $\Sigma^{3}$ union certain rescaled copies of $D\cap\Sigma^{3}$ for the cells $D$ having some $\tilde{\mu}$ coefficient outside of $\{0,1\}$ as before (for example, if $\tilde{\mu}^{E}_{j}\in(0,1)$ we need to add $\mu^{E}_{j}\cdot E\cap\Sigma^{3}$). But for $\Cone_{1}(\tau;\underline{s},\tilde{\mu})$ and $\Cone_{2}(\tau;\underline{s},\tilde{\mu})$, we will need to have a mass cancellation as in \cite{GL22}.

To get such cancellation, we need $\tilde{\mu}^{D}_{i}$ to be decreasing in $i$ and $\tilde{\mu}^{E}_{j},\tilde{\mu}^{DE}_{ij}$ to be decreasing in $j$. This can be obtained by starting from an arbitrary $\mu$ which consists of $\mu^{D}_{i},\mu^{E}_{j},\mu^{DE}_{ij}\in[0,1]$ for $2\leq i,j\leq q$, $D\in\mathcal{F}^{1}$ and $E\in\mathcal{F}^{2}$ and defining
\begin{align*}
    \tilde{\mu}^{D}_{i} 
 & =\prod_{i'=2}^{i}\mu^{D}_{i'},\\
    \tilde{\mu}^{E}_{j} & =\prod_{j'=2}^{j}\mu^{E}_{j'},\\
\tilde{\mu}^{DE}_{ij} & =\prod_{j'=2}^{j}\mu^{DE}_{ij'}.
\end{align*}
Given such $\mu$, we denote $A(\tau;\underline{s},\mu)=A(\tau;\underline{s},\tilde{\mu})$ and we replace $\mu$ by $\tilde{\mu}$ similarly in the notation.

Now let us study the mass cancellation for the cones. We start with $\Cone_{1}(\tau;\underline{s},\mu)$. Observe that
\begin{equation*}
    \Cone_{1}(\tau;s,\mu)=\Cone_{1}(\tau;s_{1}^{1},s_{1}^{2})+\sum_{i=2}^{q}\sum_{D\in\mathcal{F}^{1}}\tilde{\mu}^{D}_{i}\cdot \Cone_{1}(\tau;\Delta_{i}s_{1},s_{2}^{1})_{D}
\end{equation*}
because as $ C(\tau;s_{1}^{1},\Delta_{j}s_{2})_{E}$ and $ C(\tau;\Delta_{i}s_{1},\Delta_{j}s_{2})_{DE}$ are supported in $\Sigma^{2}$ which is disjoint from $\interior(\Sigma^{1})$, $\Cone_{1}$ over them vanishes. Defining $\tilde{\mu}^{D}_{1}=1$ for every $D$ and denoting $\Delta_{1}s_{1}=s_{1}^{1}$, we can rewrite the previous as
\begin{align*}
    \Cone_{1}(\tau;\underline{s},\mu) & =\sum_{D\in\mathcal{F}^{1}}\bigg[\Cone_{q_{D}}(\partial C(\tau;s_{1}^{1},s_{2}^{1})\llcorner D^{0})+\sum_{i=2}^{q}\tilde{\mu}^{D}_{i}\cdot\Cone_{q_{D}}(\partial C(\tau;\Delta_{i}s_{1},s_{2}^{1})_{D}\llcorner D^{0})\bigg]\\
    & =\sum_{D\in\mathcal{F}^{1}}\sum_{i=1}^{q}\tilde{\mu}^{D}_{i}\cdot\Cone_{q_{D}}(\partial C(\tau;\Delta_{i}s_{1},s_{2}^{1})\llcorner D^{0})
\end{align*}
because
\begin{align*}
    \partial C(\tau;\Delta_{i}s_{1},s_{2}^{1})\llcorner D^{0} & =\sum_{D'\text{ top cell of }\Sigma^{1}}\partial C(\tau;\Delta_{i}s_{1},s_{2}^{1})_{D'}\llcorner D^{0}\\
    & =\partial C(\tau;\Delta_{i}s_{1},s_{2}^{1})_{D}\llcorner D^{0}
\end{align*}
due to the fact that $D'\cap D^{0}=\emptyset$ if $D'\neq D$. Hence setting $\tilde{\mu}^{D}_{q+1}=0$ for every $D$ and $q$,
\begin{align*}
    \Cone_{1}(\tau;\underline{s},\mu) & =\sum_{D\in\mathcal{F}^{1}}\sum_{i=1}^{q}\tilde{\mu}^{D}_{i}\cdot\Cone_{q_{D}}(\partial C(\tau;s_{1}^{i},s_{2}^{1})\llcorner D^{0})-\tilde{\mu}^{D}_{i+1}\cdot\Cone_{q_{D}}(\partial C(\tau;s_{1}^{i},s_{2}^{1})\llcorner D^{0})
\end{align*}
therefore as each $D\in\mathcal{F}_{1}$ has width $\rho_{1}$,
\begin{align}\label{Cone 1 mass bound}
    \mass(\Cone_{1}(\tau;\underline{s},\mu))\leq\sum_{D\in\mathcal{F}^{1}}\sum_{i=1}^{q}(\tilde{\mu}^{D}_{i}-\tilde{\mu}^{D}_{i+1})\rho_{1}\mass(\partial C(\tau;s_{1}^{i},s_{2}^{1})\llcorner D^{0}).
\end{align}

Regarding $\Cone_{2}(\tau;\underline{s},\mu)$, for each value of $\Delta_{i}$ we will have the same kind of cancellation when adding over $j$. First of all, by setting $\tilde{\mu}^{E}_{1}=\tilde{\mu}^{DE}_{i1}=1$ we can rewrite
\begin{align*}
    \Cone_{2}(\tau;\underline{s},\mu) = & \sum_{E\in\mathcal{F}^{2}}\sum_{j=1}^{q}\bigg[\tilde{\mu}^{E}_{j}\cdot\Cone_{q_{E}}(\partial C(\tau;\Delta_{1}s_{1},\Delta_{j}s_{2})\llcorner E^{0})\\
    & +\sum_{D\in\mathcal{F}_{1}}\sum_{i=2}^{q}\tilde{\mu}^{D}_{i}\cdot\big(\tilde{\mu}_{ij}^{DE}\cdot\Cone_{q_{E}}(\partial C(\tau;\Delta_{i}s_{1},\Delta_{j}s_{2})_{D}\llcorner E^{0}) \big)\bigg].
\end{align*}

Using the same argument as in $\Cone_{1}$, the first line of the RHS equals
\begin{multline*}
    \sum_{E\in\mathcal{F}^{2}}\sum_{j=1}^{q}\tilde{\mu}^{E}_{j}\cdot\Cone_{q_{E}}(\partial C(\tau;\Delta_{1}s_{1},s_{2}^{j})\llcorner E^{0})-\tilde{\mu}^{E}_{j+1}\cdot\Cone_{q_{E}}(\partial C(\tau;\Delta_{1}s_{1},s_{2}^{j})\llcorner E^{0})
\end{multline*}
whose mass is bounded by
\begin{equation*}
    \sum_{E\in\mathcal{F}^{2}}\sum_{j=1}^{q}(\tilde{\mu}^{E}_{j}-\tilde{\mu}^{E}_{j+1})\rho_{2}\mass(\partial C(\tau;\Delta_{1}s_{1},s_{2}^{j})\llcorner E^{0}).
\end{equation*}
To bound the second line of RHS, the previous calculation can be done for each $D\in\mathcal{F}^{1}$ and $2\leq i\leq q$ (replacing $s_{1}^{1}=\Delta_{1}s_{1}$ by $\Delta_{i}s_{1}$), which yields
\begin{align}\label{Cone 2 mass bound}
    \mass(\Cone_{2}(\tau;\underline{s},\mu)) \leq & \sum_{E\in\mathcal{F}^{2}}\sum_{j=1}^{q}(\tilde{\mu}^{E}_{j}-\tilde{\mu}^{E}_{j+1})\rho_{2}\mass(\partial C(\tau;\Delta_{1}s_{1},s_{2}^{j})\llcorner E^{0})\\
    & +\sum_{D\in\mathcal{F}^{1}}\sum_{i=2}^{q}\sum_{E\in\mathcal{F}^{2}}\sum_{j=1}^{q}(\tilde{\mu}^{DE}_{ij}-\tilde{\mu}^{DE}_{i,j+1})\rho_{2}\mass(\partial C(\tau;\Delta_{i}s_{1},s_{2}^{j})_{D}\llcorner E^{0})
\end{align}
as $\tilde{\mu}^{D}_{i}\leq 1$.

Taking the previous into account, the strategy will be to extend $F'(x)$ inductively skeleton by skeleton in a way such that the following holds.

\textbf{Inductive property for the $l$-skeleton of $X$.} For every $p'$-dimensional cell $C$ of $X$ with $l\leq p'\leq p$ and every point $y\in C$ the following holds. Consider enumerations $x_{1}^{1},...,x_{1}^{2^{p'}}$ and $x_{2}^{1},...,x_{2}^{2^{p'}}$ of the vertices of $C$ such that
\begin{equation*}
    s_{1}(x_{1}^{1})\geq s_{1}(x_{1}^{2})\geq...\geq s_{1}(x_{1}^{2^{p'}})
\end{equation*}
and 
\begin{equation*}
    s_{2}(x_{2}^{1})\geq s_{2}(x_{2}^{2})\geq...\geq s_{2}(x_{2}^{2^{p'}}).
\end{equation*}
Then for each $1\leq i,j\leq 2^{p'}$, $D\in\mathcal{F}^{1}$ and $E\in\mathcal{F}^{2}$, there exist continuous functions $\mu^{D}_{i},\mu^{E}_{j},\mu^{DE}_{ij}:C\cap X_{l}\to[0,1]$ and a continuous family $e^{y}:C\cap X_{k}\to\mathcal{Z}_{1}(M)$ such that
\begin{equation}
    F'(x)=e^{y}(x)+A(F(y);\underline{s},\mu(x))
\end{equation}
where $\underline{s}$ is given by the $s_{i}^{j}=s_{i}(x_{i}^{j})$ (and therefore it depends only on the cell $C$ but not on $x$) and $\mu(x)$ by the collection of $\mu^{D}_{i}(x)$, $\mu^{E}_{j}(x)$ and $\mu^{DE}_{ij}(x)$. The following properties hold for $e^{y}(x)$ and $\mu(x)$.
\begin{enumerate}[label=(A\arabic*)]%[({A}1)]
    \item Denote
    \begin{align*}
        A(x) & =\#\{(D,i):\mu^{D}_{i}(x)\in(0,1)\}\\
        B(x) & =\#\{(E,j):\mu^{E}_{j}(x)\in(0,1)\}\\
        C(x) & =\#\{(D,E,i,j):\mu^{DE}_{ij}(x)\in(0,1)\}.
    \end{align*}
    Then $A(x)+B(x)+C(x)\leq\dim(E(x))$, where $E(x)$ is the unique cell of $X$ which contains $x$ in its interior. Moreover, if
    \begin{equation*}
        i(x)=\max\{i:x_{1}^{i}\in E(x)\}
    \end{equation*}
    and
    \begin{equation*}
        j(x)=\max\{j:x_{2}^{j}\in E(x)\}
    \end{equation*}
    then
    \begin{enumerate}
        \item $\mu^{D}_{i}(x)=0$ if $i>i(x)$ and $\mu^{D}_{i}(x)=1$ if $i\leq i(x)$ and $x_{i}\notin E(x)$.
        \item $\mu^{E}_{j}(x)=0$ (resp. $\mu^{DE}_{ij}(x)=0$) if $j>j(x)$ and $\mu^{E}_{j}(x)=1$ (resp. $\mu^{DE}_{ij}(x)=1$) if $j\leq j(x)$ and $x_{j}\notin E(x)$.
    \end{enumerate}
    \item The following are true
    \begin{equation*}
        \sum_{D\in\mathcal{F}^{1}}\sum_{i=1}^{q}(\tilde{\mu}^{D}_{i}(x)-\tilde{\mu}^{D}_{i+1}(x))\mass(\partial C(\tau;s_{1}^{i},s_{2}^{1})\llcorner D^{0})\leq\frac{\mass(F(x))}{\varepsilon_{1}}
    \end{equation*}
    and
    \begin{multline*}
        \sum_{E\in\mathcal{F}^{2}}\sum_{j=1}^{q}(\tilde{\mu}^{E}_{j}(x)-\tilde{\mu}^{E}_{j+1}(x))\mass(\partial C(\tau;s_{1}^{1},s_{2}^{j})\llcorner E^{0})\\
        +\sum_{D\in\mathcal{F}^{1}}\sum_{i=2}^{q}\sum_{E\in\mathcal{F}^{2}}\sum_{j=1}^{q}(\tilde{\mu}^{DE}_{ij}(x)-\tilde{\mu}^{DE}_{i,j+1}(x))\mass(\partial C(\tau;\Delta_{i}s_{1},s_{2}^{j})_{D}\llcorner E^{0})\leq 2\frac{\mass(F(x))}{\varepsilon_{2}}.
    \end{multline*}
    \item There exists a constant $c(l)$ depending only on $l$ such that
    \begin{equation*}
        \mass(e^{y}(x))\leq c(l)\varepsilon
    \end{equation*}
    for every $x\in C\cap X^{l}$.
\end{enumerate}

It is shown in the next section that (A1) yields a bound on $\mass(\Cone_{3}(F(y);\underline{s},\mu(x))$ and also $\mass(\partial A(F(y);\underline{s},\mu(x)))$. On the other hand, by (\ref{Cone 1 mass bound}) and (\ref{Cone 2 mass bound}) it follows that condition (A2) provides an upper bound for $\mass(\Cone_{1}(F(y);\underline{s},\mu(x)))$ and $\mass(\Cone_{2}(F(y);\underline{s},\mu(x)))$. Finally, (A3) provides a bound for the length of $e^{y}(x)$. Putting everything together, we get the desired bounds for $\mass(F'(x))$ and $\mass(\partial F'(x))$.

\section{Parametric Coarea Inequality for $1$-cycles in $n$-manifolds}\label{Section n arbitrary}

In this section, we are going to extend the constructions which were done previously for families of $1$-cycles in $3$ and $4$-manifolds to $n$-dimensional ambient manifolds $M$, $n\geq 5$, with the aim of generalizing the Parametric Coarea Inequality for $1$-cycles. We start by motivating some of those constructions, and then we proceed to make them rigorous in the subsequent subsections.

Recall that for $n=4$, we introduced two coarea inequality cuts: $s_{1}(x)\in(0,\varepsilon_{1})$ and $s_{2}(x)\in (0,\varepsilon_{2})$. These cuts are made with respect to the distance function to two PL submanifolds of $M$: $A_{1}=\Sigma$ and $A_{2}=\Exp(\Sigma^{2}\times[0,\varepsilon_{1}])$, where $\Sigma^{2}$ is the codimension-$1$ skeleton of $\Sigma$ under a triangulation of width $\rho_{1}<<\varepsilon_{1}$. Notice that we can think of $A_{2}$ as $r_{1}^{-1}(\Sigma^{2})$, where $r_{1}=S_{1}|_{N_{\varepsilon_{1}}\Sigma}$ and $S_{1}:M\to M$ is the previously defined $(1+\beta)$-Lipschitz map (for $\beta$ small) which maps $\Exp(x,t)\mapsto x$ on $N_{\varepsilon_{1}}\Sigma$. We also considered a $(1+\beta)$-Lipschitz map $S_{2}:M\to M$ such that $S_{2}(N_{\varepsilon_{2}}A_{2})\subseteq A_{2}$. The chain resulting from removing the part of $\tau\in\mathcal{I}_{1}(M)_{\Sigma}$ contained in $N_{s_{l}}A_{l}$ for $l=1,2$ was denoted by $\overline{C}(\tau;s_{1},s_{2})$ and its corresponding deformation via applying $R_{2}=S_{1}\circ S_{2}$ was called $C(\tau;s_{1},s_{2})\in\mathcal{I}_{1}(M)_{\Sigma}$. In order to be able to interpolate in high-dimensional cells, we added cones over $\partial C(F(x);s_{1}(x),s_{2}(x))$ obtaining the chains $A(F(x);s_{1}(x),s_{2}(x))$ which are the values of $F'$ at the $0$-skeleton. An important property of our construction is that we considered cones of radius $\rho_{1}$ for points lying in the interior of the $3$-cells of $\Sigma^{1}=\Sigma$ and cones of radius $\rho_{2}$ for points lying in the interior of $2$-cells of $\Sigma^{2}$. This was because the total number of points of the first kind is bounded above by $\frac{\mass(F(x))}{\varepsilon_{1}}$ but for the second type the upper bound is $\frac{\mass(F(x))}{\varepsilon_{2}}$. That justifies the different scales $\rho_{1}$ and $\rho_{2}$ considered to triangulate $\Sigma^{1}$ and $\Sigma^{2}$, and the requirement $\rho_{l}<<\varepsilon_{l}$ with the purpose of ensuring that the total mass of the cones we are adding goes to $0$ as $p\to\infty$. We denote $T_{1}$ the triangulation of $\Sigma^{1}$ with width $\rho_{1}$ and $T_{2}$ that of $\Sigma^{2}$ with width $\rho_{2}$. Roughly speaking, the strategy to extend $F'$ to the higher dimensional skeleta was to decompose the interpolation as the sum of two parts: one involving $1$-chains of the form $C(\tau;\Delta s_{1},s_{2})_{D}$ supported in the $3$-dimensional $\Sigma^{1}=\Sigma$, whose boundary has controlled mass precisely because we introduced the cut $s_{2}$; and another one involving $1$-chains of the form $C(\tau;s_{1},\Delta s_{2})_{E}$ which are supported in the $2$-dimensional $\Sigma^{2}$. For the latter, we used the construction of Guth and Liokumovich which consists in adding segments (or cones) supported in the boundary of each $2$-cell $E$ of $\Sigma^{2}$. 

Now suppose that $n=5$. We can define $\Sigma^{1}$, $\Sigma^{2}$, $A_{1}$, $A_{2}$, $S_{1}$ and $S_{2}$ in the same way as for $n=4$ (in particular, $A_{2}=r_{1}^{-1}(\Sigma^{2})$). Nonetheless, now $C(\tau;s_{1},\Delta s_{2})$ is supported in the $3$-dimensional $\Sigma^{2}$ so we cannot add segments on the boundary of the corresponding cells and use mass cancellation as when $n=4$. Instead, we will define a new PL submanifold $A_{3}$ of $M$ and introduce an extra coarea inequality cut at distance $s_{3}\in (0,\varepsilon_{3})$ from $A_{3}$ so that the corresponding chains $C(\tau;s_{1},\Delta s_{2},s_{3})_{E}$ have controlled boundary mass for any top cell $E$ of $(\Sigma^{2},T_{2})$; in the same way as we introduced $s_{2}$ to get that property for $C(\tau;\Delta s_{1},s_{2})_{D}$ for any top cell $D$ of $(\Sigma^{1},T_{1})$. Thus, for $5$-dimensional ambient manifolds, the interpolation will be given by sums of rescalings of three types of chains: 
\begin{itemize}
    \item Those of the form $C(\tau;\Delta s_{1},s_{2},s_{3})_{D}$ which are supported in top dimensional cells $D$ of the $4$-dimensional $\Sigma^{1}$. Their boundary mass is controlled because we introduced the cut $s_{2}$.
    \item Those of the form $C(\tau;s_{1},\Delta s_{2},s_{3})_{E}$ which are supported in top dimensional cells $E$ of the $3$-dimensional $\Sigma^{2}$. Their boundary mass is controlled because we introduced the cut $s_{3}$.
    \item Those of the form $C(\tau;s_{1},s_{2},\Delta s_{3})_{F}$ which are supported in top dimensional cells $F$ of the $2$-dimensional $\Sigma^{3}$. We add segments supported in $\partial F$ as Guth and Liokumovich did in order to have control in the mass of the boundaries.
\end{itemize}
Following the analogy with the case $n=4$, the chains $\overline{C}(\tau;s_{1},s_{2},s_{3})$ will result from removing the portions of $\tau$ supported in $N_{s_{l}}A_{l}$ for $l=1,2,3$. Then we will apply the map $R_{3}=S_{1}\circ S_{2}\circ S_{3}$ in order to ``fill in'' the gap we just created by the coarea cuts and obtain a new chain $C(\tau;s_{1},s_{2},s_{3})\in \mathcal{I}_{1}(M)_{\Sigma}$. This will guarantee that $C(\tau;s_{1},\Delta s_{2},s_{3})$ is supported in $\Sigma^{2}$ but also that it can be decomposed as
\begin{equation}\label{Eq decomposition}
    C(\tau;s_{1},\Delta s_{2},s_{3})=\sum_{E\text{ top face of }\Sigma^{2}}C(\tau;s_{1},\Delta s_{2},s_{3})_{E}
\end{equation}
where each $C(\tau;s_{1},\Delta s_{2},s_{3})_{E}$ is supported in the corresponding cell $E$ and has controlled boundary mass. $A_{3}$ is defined so that the previous decomposition is possible, analogously to $A_{2}$ when $n=4$ (see Definition \ref{Definition C(s1s2)_D}). To be precise, we know that $\overline{C}(\tau;s_{1},\Delta s_{2})$ is supported in $N_{2}=N_{\varepsilon_{2}}A_{2}$ and also that $R_{2}(N_{2})=S_{1}\circ S_{2}(N_{2})\subseteq\Sigma^{2}$. Denote $r_{2}=R_{2}|_{N_{2}}$. We can decompose $N_{2}$ as the non-disjoint union of the parts which corresponds to the different cells of $\Sigma^{2}$:
\begin{equation*}
    N_{2}=\bigcup_{E\text{ top cell of }\Sigma^{2}}r_{2}^{-1}(E).
\end{equation*}
Let $\Sigma^{3}$ be the codimension-$1$ skeleton of $(\Sigma^{2},T_{2})$. Introducing
\begin{equation*}
    A_{3}=r_{2}^{-1}(\Sigma^{3})
\end{equation*}
allows us to separate each of the previous pieces by expressing $N_{2}\setminus A_{3}$ as a disjoint union
\begin{equation*}
    N_{2}\setminus A_{3}=\bigcup_{E\text{ top cell of }\Sigma^{2}}r_{2}^{-1}(E^{0}).
\end{equation*}
Moreover, if we perform a third coarea inequality cut $s_{3}$ away from $A_{3}$, we can decompose 
\begin{equation*}
    \overline{C}(\tau;s_{1},\Delta s_{2},s_{3})=\sum_{E\text{ top cell of }(\Sigma^{2},T_{2})}\overline{C}(\tau;s_{1},\Delta s_{2},s_{3})\llcorner r_{2}^{-1}(E)
\end{equation*}
where the chains on the RHS will be the $\overline{C}(\tau;s_{1},\Delta s_{2},s_{3})_{E}$ and will have controlled boundary mass. Notice that the previous does not hold without performing the third cut, as $\overline{C}(\tau;s_{1},\Delta s_{2})$ may intersect $A_{3}$ in a $0$-cycle of arbitrarily large mass; which is the exact reason why $C(\tau;s_{1},\Delta s_{2})$ may intersect $\partial E$ in a $0$-cycle of very large mass for some $3$-cells $E$ of $\Sigma^{2}$.

\begin{remark}
    For $l=1,2,3$, can think of $N_{l}$ as an ``enlarged version'' of $\Sigma^{l}$, where the correspondence between them is given by the $(1+\beta)$-Lipschitz map $r_{l}:N_{l}\to\Sigma^{l}$. Under this correspondence, $\Sigma^{l+1}$ corresponds to $A_{l+1}$ and chains of the form $\overline{C}(\tau;...,\Delta s_{l},...)$ correspond to chains of the form $C(\tau;...,\Delta s_{l},...)$. The reason to consider these enlargements is precisely to have enough room to do the coarea inequality cuts to $\tau$ which induce a decomposition like (\ref{Eq decomposition}). 
\end{remark}

The previous strategy can be generalized to arbitrary $n$. We can define $\Sigma^{1}$,...,$\Sigma^{n-2}$, PL submanifolds $A_{1}$,...,$A_{n-2}$ of $M$, open neighborhoods $N_{1}$,...,$N_{n-2}$ of the previous and $(1+\beta)$-Lipschitz maps $S_{l}:M\to M$ satisfying $S_{l}(N_{l})=A_{l}$. We introduce $(n-2)$ coarea inequality cuts $s_{1},...,s_{n-2}$, $s_{l}\in(0,\varepsilon_{l})$, each of them with respect to the distance function to the corresponding $A_{l}$. The constructions are done in a way such that we can always decompose $C(\tau;s_{1},...,s_{l-1},\Delta s_{l},s_{l+1},...,s_{n-2})$ as sum of pieces supported in each top cell $D$ of $\Sigma^{l}$ as in (\ref{Eq decomposition}) with controlled boundary mass (provided $l\leq n-3$). Thus the interpolation to extend to the higher-dimensional skeleta will be reduced to a sum of rescalings of $n-2$ types of chains:
\begin{itemize}
    \item The $l$-th type for $1\leq l\leq n-3$ is given by $C(\tau;s_{1},...,s_{l-1},\Delta s_{l},s_{l+1},...,s_{n-2})_{D}$ for some top cell $D$ of the $(n-l)$-dimensional $\Sigma^{l}$. It is supported in the corresponding cell $D$ of $\Sigma^{l}$. The boundary mass is controlled because the cut $s_{l+1}$ was made.
    \item  The last type is $C(\tau;s_{1},...,s_{n-3},\Delta s_{n-2})_{D}$ which is supported in a top cell $D$ of the $2$-dimensional $\Sigma^{n-2}$. Here we add the segments (or cones) supported in $\partial D$ as in the previous cases, based on \cite{GL22}.
\end{itemize}
Each $\Sigma^{l}$ is provided with a triangulation $T_{l}$ of a certain width $\rho_{l}$, where 
\begin{equation*}
\varepsilon_{1}>>\rho_{1}>>\varepsilon_{2}>>\rho_{2}>>...>>\varepsilon_{n-2}>>\rho_{n-2}.
\end{equation*}
The previous makes the necessary constructions possible and also guarantees that the mass of the cones added to go from chains of the form $C(\tau;...)$ to those of the form $A(\tau;...)$ (step necessary for the interpolation in high-dimensional cells, same as in the cases $n=3$ and $n=4$) is accurately bounded.

\subsection{Definition of $C(\tau;s_{1},...,s_{k})$}\label{Section cuts}

To simplify the exposition, in Sections \ref{Section cuts} to \ref{Section Proof of Coarea Ineq} we will assume that $M$ a rectangular domain in $\mathbb{R}^{n}$ provided with a rectangular structure $T_{0}$ of width at least $1$ (see Section \ref{Section rectangular complexes} for the definitions). In Section \ref{Extension to triangulable domains}, we show how to extend the result to almost $1$-Lipschitz triangulable piecewise smooth Riemannian manifolds with boundary. Fix a small number $\alpha>0$, denote $\alpha'=\frac{2}{(n-1)(n-2)}\alpha$ and define for each $p\in\mathbb{N}$ a collection of positive real numbers
\begin{equation*}
    1=\rho_{0}>>\varepsilon_{1}>>\rho_{1},\rho'_{1}>>\varepsilon_{2}>>\rho_{2},\rho'_{2}>>...>>\varepsilon_{n-2}>>\rho_{n-2},\rho'_{n-2}
\end{equation*}
with $\varepsilon_{l}=\varepsilon_{l}(p)$, $\rho_{l}=\rho_{l}(p)$ and $\rho'_{l}=\rho'_{l}(p)$ as follows. Set $n_{p}=\lfloor p^{\alpha'}\rfloor$,
\begin{align*}
    \varepsilon_{1} & = p^{-\frac{1}{n}}\\
    \rho_{1} & = \frac{1}{\lfloor p^{\frac{1}{n-1}}\rfloor} \sim p^{-\frac{1}{n-1}}\\
    \rho'_{1} & =(1-2\varepsilon_{1})\rho_{1}
\end{align*}
and
\begin{align*}
    \rho_{l} & =\frac{1}{n_{p}}\rho_{l-1}\\
    \varepsilon_{l} & = \frac{1}{\sqrt{n_{p}}}\rho'_{l-1} \\
    \rho'_{l} & = \frac{1}{n_{p}}(1-\frac{2}{\sqrt{n_{p}}})\rho'_{l-1}
\end{align*}
if $2\leq l\leq n-2$. Notice that for every $l$, $\rho'_{l}\leq \rho_{l}$, $\lim_{p\to\infty}\frac{\rho'_{l}}{\rho_{l}}=1$, $\rho_{l}<<\varepsilon_{l}$, $\varepsilon_{l}<<\rho_{l-1}$ and $\rho'_{l}=\frac{1}{n_{p}}(\rho'_{l-1}-2\varepsilon_{l})$ (the latter width comes from applying Lemma \ref{Lemma map S} for $\rho=\rho'_{l-1}$, $\varepsilon=\varepsilon_{l}$ and $q=n_{p}$).

\begin{remark}
    In case $\rho_{0}\neq 1$, the same constructions can be done provided $p\geq p_{0}=(\frac{3}{\rho_{0}})^{n}$ which guarantees $\rho_{0}>3\varepsilon_{1}$.
\end{remark}

For each $1\leq l\leq n-2$, we define a rectangular structure $T_{l}$ on $M$ as follows. $T_{1}$ is obtained from $T_{0}$ by performing a $\lfloor p^{\frac{1}{n-1}}\rfloor$-refinement. Then we inductively obtain $T_{l+1}$ from $T_{l}$ by performing an $n_{p}$-refinement. Observe that $T_{l}$ has width at least $\rho_{l}$ for every $l$. We define a finite sequence $\{\Sigma^{l}\}_{1\leq l\leq n}$ such that each $\Sigma^{l}$ is a rectangular subpolyhedron of $(\Sigma,T_{l-1})$ and $\dim(\Sigma^{l})=n-l$. We set $\Sigma^{1}=\Sigma$ and we recursively define $\Sigma^{l+1}$ as the $n-(l+1)$-skeleton  of $(\Sigma^{l},T_{l})$. In other words, at each step we consider an $n_{p}$-refinement of $\Sigma^{l}$ (which by definition had the structure $T_{l-1}$) and then we take the codimension-$1$ skeleton to obtain $\Sigma^{l+1}$. The previous is done for $l\leq n-2$. For $l=n-1$ we define $\Sigma^{n-1}$ as the $1$-skeleton of $(\Sigma^{n-2},T_{n-2})$ and for $l=n$ we let $(\Sigma^{n},T^{n})$ be the $0$-skeleton of $(\Sigma^{n-2},T_{n-2})$. Unless otherwise is specified, the cubical structure assigned to $\Sigma^{l}$ will always be $T_{l}$.

\begin{remark}
    Observe that the previous refinements can be done provided $n_{p}\geq 9$, which is true if $p\geq p_{0}(n,\alpha)$.
\end{remark}

\begin{definition}
    We denote by $|\cdot|_{\infty}$ the infinity norm in $\mathbb{R}^{n}$, given by
    \begin{equation*}
        |(x_{1},...,x_{n})|_{\infty}=\max\{|x_{i}|:1\leq i\leq n\}.
    \end{equation*}
    We also denote by $|\cdot|$ the Euclidean norm
    \begin{equation*}
        |(x_{1},...,x_{n})|=\sqrt{x_{1}^{2}+...+x_{n}^{2}}.
    \end{equation*}
\end{definition}

\begin{definition}
    Given $\varepsilon>0$ and a PL submanifold $A\subseteq M$, denote 
    \begin{align*}
        N_{\varepsilon}A & =\{x\in M:\dist(x,A)<\varepsilon\}\\
        \overline{N_{\varepsilon}A} & =\{x\in M:\dist(x,A)\leq\varepsilon\}\\
        D_{\varepsilon}A & =\{x\in M:\dist(x,A)=\varepsilon\}.
    \end{align*}
    Here distances are measured with respect to the Euclidean metric. If we use the $|\cdot|_{\infty}$ metric instead, we denote the previous sets by $N_{\varepsilon}^{\infty}A$, $\overline{N_{\varepsilon}^{\infty}A}$ and $D_{\varepsilon}^{\infty}A$.
\end{definition}

\begin{remark}
    Notice that $N_{\varepsilon}A\subseteq N_{\varepsilon}^{\infty}A$ for every $\varepsilon>0$ and every $A$.
\end{remark}

\begin{definition}
    Let $\mathcal{I}_{1}(M)_{\Sigma}=\{\tau\in\mathcal{I}_{1}(M):\support(\partial\tau)\subseteq\Sigma\}$ be the space of absolute flat $1$-chains in $M$ with $\mathbb{Z}_{2}$ coefficients whose boundary is supported in $\Sigma=\partial M$.
\end{definition}

For each $1\leq l\leq n-2$, we are going to define a rectangular structure $T'_{l}$ on $M$, a rectangular subcomplex $A_{l}$ of $T'_{l-1}$, an open neighborhood $N_{l}$ of $A_{l}$ in the $|\cdot|_{\infty}$-metric and a map $S_{l}:M\to M$ such that
\begin{enumerate}[label=(B\arabic*)]
    \item \label{ite : 2.1} $T'_{0}=T_{0}$, $T'_{l}$ is a refinement of $T'_{l-1}$ and each $T'_{l}$ has width at least $\rho'_{l}$.
    \item \label{ite : 2.2} $N_{l}=N_{\varepsilon_{l}}^{\infty}A_{l}$ and it is a rectangular subcomplex of $(M,T'_{l})$.
    \item \label{ite : 2.3} Each $S_{l}$ is $\frac{\rho'_{l-1}}{\rho'_{l-1}-2\varepsilon_{l}}$-Lipschitz and $S_{l}(N_{l})=A_{l}$.
    \item \label{ite : 2.4} The map $R_{l}=S_{1}\circ...\circ S_{l}:(M,T'_{l})\to(M,T_{l})$ is rectangular for every $1\leq l\leq n-2$ and $R_{l}(N_{l})\subseteq \Sigma^{l}$.
    \item \label{ite : 2.5} $A_{l+1}\subseteq N_{l}$. Moreover, if we denote $r_{l}=R_{l}|_{N_{l}}$ then $A_{l+1}=r_{l}^{-1}(\Sigma^{l+1})$.
    
\end{enumerate}

We set $A_{1}=\Sigma$, $N_{1}=N_{\varepsilon_{1}}^{\infty}\Sigma$ and $S_{1}=S_{(M,T_{0},\varepsilon_{1})}$. By Lemma \ref{Lemma map S}, there exists a rectangular structure $T_{1}'$ of width at least $(1-2\varepsilon_{1})\rho_{1}$ on $M$ such that $S_{1}:(M,T'_{1})\to (M,T_{1})$ is a rectangular map, $S_{1}(N_{1})\subseteq \Sigma=A_{1}$ and $N_{1}$ is a rectangular subcomplex of $(M,T'_{1})$.

Suppose that we have defined the previous objects for $1\leq l'\leq l$ in a way such that \ref{ite : 2.1} to \ref{ite : 2.5} hold for except for \ref{ite : 2.5} for $l'=l$. We know that $r_{l}:(N_{l},T'_{l})\to(\Sigma^{l},T_{l})$ is a rectangular map. Therefore if we set $A_{l+1}:=r_{l}^{-1}(\Sigma^{l+1})$, as $\Sigma^{l+1}$ is a subcomplex of $(\Sigma^{l},T_{l})$ it follows that $A_{l+1}$ is a subcomplex of $(N_{l},T'_{l})$. Recall that the width of $T'_{l}$ is at least $\rho'_{l}$ which is bigger than $\varepsilon_{l+1}$. Set $N_{l+1}=N_{\varepsilon_{l+1}}^{\infty}A_{l+1}$ and $S_{l+1}=S_{(M,T'_{l},\varepsilon_{l+1})}$ as defined in Lemma \ref{Lemma map S}. Then by Lemma \ref{Lemma map S}, there exists a rectangular structure $T'_{l+1}$ of width at least $\rho'_{l+1}$ such that $S_{l+1}:(M,T'_{l+1})\to(M,\Refine^{n_{p}}T'_{l})$ is rectangular (see Definition \ref{Def q-refinement}). This implies that $R_{l+1}:(M,T'_{l+1})\to(M,T_{l+1})$ is rectangular. In addition, $S_{l+1}(N_{l+1})=A_{l+1}$ hence $r_{l+1}(N_{l+1})=r_{l}(A_{l+1})=\Sigma^{l+1}$ and all properties are verified.

\begin{remark}\label{Rk 1}
    Observe that $S_{k}(N_{l})\subseteq N_{l}$ if $k>l$ and moreover: given an $(n-l)$-dimensional cell $D$ of $\Sigma^{l}$ it holds $S_{k}(r_{l}^{-1}(D))\subseteq r_{l}^{-1}(D)$. This implies that
    \begin{equation*}
        R_{k}(r_{l}^{-1}(D))=R_{l}\circ S_{l+1}\circ...\circ S_{k}(r_{l}^{-1}(D)\subseteq R_{l}(r_{l}^{-1}(D))\subseteq D.
    \end{equation*}
\end{remark}

\begin{definition}
    Let $\tau\in\mathcal{I}_{1}(M)_{\Sigma}$. We say that $s\in(0,\varepsilon_{l})$ is an $l$-admissible cut for $\tau$ if $\tau$ is transverse to $D_{s}A_{l}$ (see Definition \ref{Def tubular neighborhood}) and
    \begin{equation*}
        \mass(\tau\llcorner D_{s}A_{l})\leq\frac{\mass(\tau)}{\varepsilon_{l}}.
    \end{equation*}
    Here distances are measured with respect to the Euclidean norm $|\cdot|$. The existence of such a cut is guaranteed by the coarea inequality. A tuple $(s_{1},...,s_{k})$ is said to be an admissible tuple for $\tau$ if $s_{l}\in(0,\varepsilon_{l})$ is an $l$-admissible cut for $\tau$ for every $1\leq l\leq k$ and in addition $\tau$ is transverse to $D_{s_{l}}A_{l}\cap D_{s_{l'}}A_{l'}$ for every $l\neq l'$, $l,l'\in\{1,...,k\}$ (this implies that $\tau\llcorner [D_{s_{l}}A_{l}\cap D_{s_{l'}}A_{l'}]=0$).
\end{definition}

\begin{definition}
    Let $(s_{1},...,s_{k})$ be an admissible tuple for $\tau$. We define
    \begin{align*}
        \overline{C}(\tau;s_{1},...,s_{k}) & =\tau\llcorner(M\setminus(\bigcup_{l=1}^{k} N_{s_{l}}A_{l}))\\
        C(\tau;s_{1},...,s_{k}) & = R_{k}(\overline{C}(\tau;s_{1},...,s_{k})).
    \end{align*}
\end{definition}

\begin{definition}
    Given $1\leq l\leq k$, we set
    \begin{equation*}
        \partial_{l}\overline{C}(\tau;s_{1},...,s_{k})=(\tau\llcorner D_{s_{l}}A_{l})\llcorner(M\setminus\bigcup_{\substack{1\leq l'\leq k \\ l'\neq l}}\overline{N_{s_{l'}}A_{l'}}).
    \end{equation*}
\end{definition}

\begin{remark}
    By definition of admissible cut, it holds
    \begin{equation*}
        \mass(\partial_{l}\overline{C}(\tau;s_{1},...,s_{k}))\leq \mass(\tau\llcorner D_{s_{l}}A_{l})\leq\frac{\mass(\tau)}{\varepsilon_{l}}.
    \end{equation*}
\end{remark}

\begin{proposition}\label{Prop 1}
    It holds
    \begin{equation*}
        \partial\overline{C}(\tau;s_{1},...,s_{k})=\sum_{l=1}^{k}\partial_{l}\overline{C}(\tau;s_{1},...,s_{k}).
    \end{equation*}
\end{proposition}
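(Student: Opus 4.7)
The plan is to induct on $k$, using the standard boundary formula for the restriction of a flat $1$-chain to a sublevel set of a Lipschitz function. Concretely, whenever a chain $\sigma\in\mathcal{I}_{1}(M)$ is transverse to a level set $D_{s}A$, one has with $\mathbb{Z}_{2}$ coefficients
\begin{equation*}
\partial\bigl(\sigma\llcorner(M\setminus N_{s}A)\bigr)=(\partial\sigma)\llcorner(M\setminus N_{s}A)+\sigma\llcorner D_{s}A.
\end{equation*}
The admissibility of the tuple $(s_{1},\dots,s_{k})$ is precisely what guarantees that this identity can be applied at every stage of the induction.

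For the base case $k=1$, take $\sigma=\tau$ and $A=A_{1}=\Sigma$. Since $\partial\tau$ is supported in $\Sigma\subseteq\overline{N_{s_{1}}\Sigma}$ and $s_{1}>0$, the restriction $(\partial\tau)\llcorner(M\setminus N_{s_{1}}\Sigma)$ vanishes, and the slice term $\tau\llcorner D_{s_{1}}A_{1}$ is by definition $\partial_{1}\overline{C}(\tau;s_{1})$.

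For the inductive step, write $\overline{C}(\tau;s_{1},\dots,s_{k})=\overline{C}(\tau;s_{1},\dots,s_{k-1})\llcorner(M\setminus N_{s_{k}}A_{k})$ and apply the restriction formula with $\sigma=\overline{C}(\tau;s_{1},\dots,s_{k-1})$ and $A=A_{k}$. The new slice term is
\begin{equation*}
\overline{C}(\tau;s_{1},\dots,s_{k-1})\llcorner D_{s_{k}}A_{k}=(\tau\llcorner D_{s_{k}}A_{k})\llcorner\bigcap_{l<k}\bigl(M\setminus N_{s_{l}}A_{l}\bigr),
\end{equation*}
which, up to the contributions supported on $D_{s_{l}}A_{l}\cap D_{s_{k}}A_{k}$ for $l<k$ (all vanishing by the admissibility assumption $\tau\llcorner[D_{s_{l}}A_{l}\cap D_{s_{k}}A_{k}]=0$), equals $\partial_{k}\overline{C}(\tau;s_{1},\dots,s_{k})$. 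Substituting the inductive hypothesis for $\partial\overline{C}(\tau;s_{1},\dots,s_{k-1})$ and restricting each summand $\partial_{l}\overline{C}(\tau;s_{1},\dots,s_{k-1})$ to $M\setminus N_{s_{k}}A_{k}$ turns it (again modulo the same transverse null intersections) into $\partial_{l}\overline{C}(\tau;s_{1},\dots,s_{k})$, completing the induction.

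The main technical point is to justify the slice formula and the vanishing of the transverse intersection contributions. Both are built into the definition of an admissible tuple: transversality of $\tau$ to each $D_{s_{l}}A_{l}$ makes the slice a well-defined $0$-chain of mass at most $\mass(\tau)/\varepsilon_{l}$, and transversality to the pairwise intersections $D_{s_{l}}A_{l}\cap D_{s_{l'}}A_{l'}$ ensures no spurious lower-dimensional pieces appear when the slices are iterated. With those inputs in place, the remainder of the argument is bookkeeping, considerably simplified by the fact that $\mathbb{Z}_{2}$ coefficients eliminate all sign tracking.
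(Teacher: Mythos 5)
Your proof is correct and follows essentially the same route as the paper: induction on $k$, applying the slicing/restriction formula for the boundary of $\overline{C}(\tau;s_{1},\dots,s_{k-1})\llcorner(M\setminus N_{s_{k}}A_{k})$, and invoking the transversality condition $\tau\llcorner[D_{s_{l}}A_{l}\cap D_{s_{l'}}A_{l'}]=0$ to identify the resulting terms with the $\partial_{l}\overline{C}(\tau;s_{1},\dots,s_{k})$. The only cosmetic difference is that the paper states the slicing formula with the closed set $M\setminus\overline{N_{s_{k+1}}A_{k+1}}$ so that the restricted $\partial_{l}$ terms match the new ones by definition, whereas your open-set version needs one extra appeal to the transverse-intersection vanishing to make that identification; you also spell out the base case (where the paper merely writes ``clear''), correctly using that $\partial\tau$ is supported in $\Sigma=A_{1}\subset N_{s_{1}}A_{1}$.
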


\begin{proof}
    We proceed by induction on $k$. If $k=1$, the result is clear. Suppose it is true for $k$. Observe that
    \begin{equation*}
        \overline{C}(\tau;s_{1},...,s_{k+1})=\overline{C}(\tau;s_{1},...,s_{k})\llcorner(M\setminus N_{s_{k+1}}A_{k+1})
    \end{equation*}
    therefore
    \begin{align*}
        \partial\overline{C}(\tau;s_{1},...,s_{k+1}) & =\partial\overline{C}(\tau;s_{1},...,s_{k})\llcorner(M\setminus \overline{N_{s_{k+1}}A_{k+1}})+\overline{C}(\tau;s_{1},...,s_{k})\llcorner D_{s_{k+1}}A_{k+1}\\
        & =\sum_{l=1}^{k}\partial_{l}\overline{C}(\tau;s_{1},...,s_{k})\llcorner(M\setminus \overline{N_{s_{k+1}}A_{k+1}})+(\tau\llcorner D_{s_{k+1}}A_{k+1})\llcorner(M\setminus\bigcup_{l=1}^{k}N_{s_{l}}A_{l})\\
        & =\sum_{l=1}^{k}\partial_{l}\overline{C}(\tau;s_{1},...,s_{k})\llcorner(M\setminus \overline{N_{s_{k+1}}A_{k+1}})+(\tau\llcorner D_{s_{k+1}}A_{k+1})\llcorner(M\setminus\bigcup_{l=1}^{k}\overline{N_{s_{l}}A_{l}})\\
        & =\sum_{l=1}^{k+1}\partial_{l}\overline{C}(\tau;s_{1},...,s_{k+1})
    \end{align*}
    by inductive hypothesis. We have used the fact that $\tau\llcorner[D_{s_{k+1}}A_{k+1}\cap D_{s_{l}}A_{l}]=0$ for every $1\leq l\leq k$.
\end{proof}

\begin{definition}
    We set $\partial_{l}C(\tau;s_{1},...,s_{k})=R_{k}(\partial_{l}\overline{C}(\tau;s_{1},...,s_{k}))$.
\end{definition}

\begin{remark}
    As $\partial_{l}\overline{C}(\tau;s_{1},...,s_{k})$ is supported in $N_{l}$ (because $D_{s_{l}}A_{l}\subseteq N_{l}$), by Remark \ref{Rk 1} it holds
    \begin{equation*}
        \support(\partial_{l}C(\tau;s_{1},...,s_{k}))\subseteq\Sigma^{l}
    \end{equation*}
    and using Proposition \ref{Prop 1} it follows
    \begin{equation*}
        \partial C(\tau;s_{1},...,s_{k})=\sum_{l=1}^{k}\partial_{l}C(\tau;s_{1},...,s_{k}).
    \end{equation*}
\end{remark}

\subsection{Definition of $C(\tau;\Delta_{i_{1}}s_{1},...,\Delta_{i_{l}}s_{l},s_{l+1}^{i_{l+1}},...,s_{k}^{i_{k}})$}

Given a set of admissible tuples $\{(s^{i}_{1},s^{i}_{2},...,s^{i}_{k}):1\leq i\leq q\}$ for $\tau$, we will need to interpolate between $C(\tau;s_{1}^{i},...,s_{k}^{i})$ for the different values of $1\leq i\leq q$. Throughout this section, we will assume that $s_{j}^{1}\geq s_{j}^{2}\geq...\geq s_{j}^{q}$ for every $1\leq j\leq k$. With the purpose of interpolating, we will need to define $C(\tau;\Delta_{i_{1}} s_{1},...,\Delta_{i_{l}} s_{l},s_{l+1}^{i_{l+1}},...,s_{k}^{i_{k}})$ for each $1\leq l\leq k$ and for each tuple $(i_{1},...,i_{k})\in\{1,...,q\}^{k}$.

    \begin{definition}
        We define $\overline{C}(\tau;\Delta_{i_{1}} s_{1},...,\Delta_{i_{l}} s_{l},s_{l+1}^{i_{l+1}},...,s_{k}^{i_{k}})$ inductively. For $l=1$, we set

        \begin{equation*}
            \overline{C}(\tau;\Delta_{1}s_{1},s_{2}^{i_{2}},...,s_{k}^{i_{k}})  =\overline{C}(\tau;s_{1}^{1},s_{2}^{i_{2}},...,s_{k}^{i_{k}})
        \end{equation*}
        for $i_{1}=1$ and
        \begin{equation*}
            \overline{C}(\tau;\Delta_{i_{1}} s_{1},s_{2}^{i_{2}},...,s_{k}^{i_{k}})  =\overline{C}(\tau;s^{i_{1}}_{1},s_{2}^{i_{2}},...,s_{k}^{i_{k}})-\overline{C}(\tau;s_{1}^{i_{1}-1},s_{2}^{i_{2}},...,s_{k}^{i_{k}})
        \end{equation*}
        for $2\leq i_{1}\leq q$. Assume $\overline{C}(\tau;\Delta_{i_{1}} s_{1},...,\Delta_{i_{l}} s_{l},s_{l+1}^{i_{l+1}},...,s_{k}^{i_{k}})$ has been defined. We set
        \begin{equation*}
            \overline{C}(\tau;\Delta_{i_{1}}s_{1},...,\Delta_{i_{l}}s_{l},\Delta_{1}s_{l+1},s_{l+2}^{i_{l+2}},...,s_{k}^{i_{k}})= \overline{C}(\tau;\Delta_{i_{1}}s_{1},...,\Delta_{i_{l}}s_{l},s_{l+1}^{1},s_{l+2}^{i_{l+2}},...,s_{k}^{i_{k}})
        \end{equation*}
        for $i_{l+1}=1$ and
        \begin{align*}
            \overline{C}(\tau;\Delta_{i_{1}} s_{1},...,\Delta_{i_{l+1}} s_{l+1},s_{l+2}^{i_{l+2}},...,s_{k}^{i_{k}})=\overline{C}(\tau;\Delta_{i_{1}} s_{1},...,\Delta_{i_{l}} s_{l},s_{l+1}^{i_{l+1}},...,s_{k}^{i_{k}}) \\
            -\overline{C}(\tau;\Delta_{i_{1}} s_{1},...,\Delta_{i_{l}} s_{l},s_{l+1}^{i_{l+1}-1},...,s_{k}^{i_{k}})
        \end{align*}
        for $2\leq l\leq q$. We can define $C(\tau;\Delta_{i_{1}} s_{1},...,\Delta_{i_{l}} s_{l},s_{l+1}^{i_{l+1}},...,s_{k}^{i_{k}})$ in the same way and it holds
        \begin{equation*}
            C(\tau;\Delta_{i_{1}} s_{1},...,\Delta_{i_{l}} s_{l},s_{l+1}^{i_{l+1}},...,s_{k}^{i_{k}})=R_{k}(\overline{C}(\tau;\Delta_{i_{1}} s_{1},...,\Delta_{i_{l}} s_{l},s_{l+1}^{i_{l+1}},...,s_{k}^{i_{k}})).
        \end{equation*}
    \end{definition}

    \begin{definition}\label{Def Delta_i s}
    We will denote 
    \begin{align*}
        \underline{s} & =(s_{j}^{i})_{\substack{1\leq j\leq k \\ 1\leq i\leq q}}\\
        \underline{i} & =(i_{1},...,i_{k})\\
        %(\underline{i},\underline{s}) & =(s_{1}^{i_{1}},...,s_{k}^{i_{k}})\\
        %(\underline{i},\Delta_{l}\underline{s}) & =(\Delta_{i_{1}}s_{1},...,\Delta_{i_{l}}s_{l},s_{l+1}^{i_{l+1}},...,s_{n-2}^{i_{n-2}})\\
        \Delta_{\underline{i}}\underline{s} & =(\Delta_{i_{1}} s_{1},...,\Delta_{i_{k}}s_{k}).
    \end{align*}
\end{definition}

    \begin{definition}
        For each $1\leq l\leq k$ and $s>0$ denote
        \begin{align*}
            \overline{M}^{l}_{s} & =M\setminus N_{s}A_{l}\\
            M^{l}_{s} & =M\setminus\overline{N_{s}A_{l}}.
        \end{align*}
        Given a collection $\{(s_{1}^{i},s_{2}^{i},...,s_{k}^{i}):1\leq i\leq q\}$ as defined in the beginning of the section, we denote
        \begin{align*}
            M^{l}_{\Delta_{1}\underline{s}} & =M^{l}_{s_{l}^{1}},\\
            \overline{M}^{l}_{\Delta_{1}\underline{s}} & =\overline{M}^{l}_{s_{l}^{1}}
        \end{align*}
        and
        \begin{align*}
        M^{l}_{\Delta_{i}\underline{s}} & =M^{l}_{s_{l}^{i}}\setminus\overline{M}^{l}_{s_{l}^{i-1}},\\
        \overline{M}^{l}_{\Delta_{i}\underline{s}} & =\overline{M}^{l}_{s_{l}^{i}}\setminus M^{l}_{s_{l}^{i-1}}
        \end{align*}
        for $2\leq i\leq q$. Finally, we set
        \begin{align*}
            M_{(\Delta_{i_{1}}s_{1},...,\Delta_{i_{l}}s_{l},s_{l+1}^{i_{l+1}},...,s_{k}^{i_{k}})}& =[\bigcap_{j=1}^{l}M^{j}_{\Delta_{i_{j}}\underline{s}}]\bigcap[\bigcap_{j=l+1}^{k}M^{j}_{s_{j}^{i_{j}}}],\\
            \overline{M}_{(\Delta_{i_{1}}s_{1},...,\Delta_{i_{l}}s_{l},s_{l+1}^{i_{l+1}},...,s_{k}^{i_{k}})}& =[\bigcap_{j=1}^{l}\overline{M}^{j}_{\Delta_{i_{j}}\underline{s}}]\bigcap[\bigcap_{j=l+1}^{k}\overline{M}^{j}_{s_{j}^{i_{j}}}].
        \end{align*}
        One case of interest of the previous definition is when $l=k$, where using the notation from Definition \ref{Def Delta_i s} we have
        \begin{align*}
            M_{\Delta_{\underline{i}}\underline{s}} & =\bigcap_{j=1}^{k}M^{j}_{\Delta_{i_{j}}\underline{s}}\\
            \overline{M}_{\Delta_{\underline{i}}\underline{s}} & =\bigcap_{j=1}^{k}\overline{M}^{j}_{\Delta_{i_{j}}\underline{s}}.
        \end{align*}

    \end{definition}

    \begin{remark}\label{Remark disjoint omegas}
        Notice that
        \begin{equation*}
            \bigcup_{\underline{i}\in\{1,...,q\}^{k}}\overline{M}_{\Delta_{\underline{i}}\underline{s}}=\overline{M}_{(s_{1}^{q},...,s_{k}^{q})}
        \end{equation*}
         and more generally, given $(j_{1},...,j_{k})\in\{1,...,q\}^{k}$
          \begin{equation*}
            \bigcup_{\underline{i}:1\leq i_{r}\leq j_{r}}\overline{M}_{\Delta_{\underline{i}}\underline{s}}=\overline{M}_{(s_{1}^{j_{1}},...,s_{k}^{j_{k}})}
        \end{equation*}
        and the union is disjoint (except for the boundaries of the involved domains).
    \end{remark}

    \begin{remark}\label{Remark CDelta}
        Observe that
        \begin{align*}
            \overline{C}(\tau;\Delta_{\underline{i}}\underline{s}) & =\tau\llcorner \bigcap_{j=1}^{k}\overline{M}^{j}_{\Delta_{i_{j}} s}\\
            & =\tau\llcorner M_{\Delta_{\underline{i}}\underline{s}}.
        \end{align*}
        Therefore, using Remark \ref{Remark disjoint omegas} and the fact that $\tau\pitchfork D_{s_{j}^{i}}A_{j}$ for every $i,j$ we can see that
        \begin{equation*}
        \sum_{i_{1}=1}^{q}...\sum_{i_{k}=1}^{q}\overline{C}(\tau;\Delta_{\underline{i}}\underline{s})=\overline{C}(\tau;s_{1}^{q},...,s_{k}^{q})
        \end{equation*}
        and more generally, given $(j_{1},...,j_{k})\in\{1,...,q\}^{k}$
        \begin{equation*}
        \sum_{i_{1}=1}^{j_{1}}...\sum_{i_{k}=1}^{j_{k}}\overline{C}(\tau;\Delta_{\underline{i}}\underline{s})=\overline{C}(\tau;s_{1}^{j_{1}},...,s_{k}^{j_{k}})
        \end{equation*}
        and the mass functional is additive when applied to both sides of the previous equation. Therefore, we can think of the $\overline{C}(\tau;\Delta_{\underline{i}}\underline{s})$ as small portions of $\tau$ that we can glue together to construct any $\overline{C}(\tau;s_{1}^{j_{1}},...,s_{k}^{j_{k}})$.
    \end{remark}

    \begin{definition}
        Given  a collection $\{(s_{1}^{i},s_{2}^{i},...,s_{k}^{i}):1\leq i\leq q\}$ as above, $1\leq l\leq k$ and $1\leq i\leq q$, we define $\tau\llcorner D^{l}_{\Delta_{i}\underline{s}}$ as follows. For $i=1$ we set
        \begin{equation*}
            \tau\llcorner D^{l}_{\Delta_{1}\underline{s}}=\tau\llcorner D_{s_{l}^{1}}A_{l}
        \end{equation*}
        and for $2\leq i\leq q$,
        \begin{equation*}
            \tau\llcorner D^{l}_{\Delta_{i}\underline{s}}=\tau\llcorner D_{s^{i}_{l}}A_{l}-\tau\llcorner D_{s^{i-1}_{l}}A_{l}.
        \end{equation*}
    \end{definition}

    \begin{definition}
        For each $1\leq j\leq l$, we define
        \begin{equation*}
            \partial_{j}\overline{C}(\tau;\Delta_{i_{1}}s_{1},...,\Delta_{i_{l}}s_{l},s_{l+1}^{i_{l+1}},...,s_{k}^{i_{k}})=[\tau\llcorner D^{j}_{\Delta_{i_{j}}\underline{s}}]\llcorner[\bigcap_{\substack{1\leq j'\leq l \\ j'\neq j}}M^{j'}_{\Delta_{i_{j'}}\underline{s}}]\cap[\bigcap_{j'=l+1}^{k}M^{j'}_{s_{j'}^{i_{j'}}}]
        \end{equation*}
        and for $l<j\leq k$,
        \begin{equation*}
            \partial_{j}\overline{C}(\tau;\Delta_{i_{1}}s_{1},...,\Delta_{i_{l}}s_{l},s_{l+1}^{i_{l+1}},...,s_{k}^{i_{k}})=[\tau\llcorner D_{s_{j}^{i_{j}}}A_{j}]\llcorner[\bigcap_{1\leq j'\leq l}M^{j'}_{\Delta_{i_{j'}} s}]\cap[\bigcap_{\substack{l+1\leq j'\leq k \\ j'\neq j}}M^{j'}_{s^{i_{j'}}_{j'}}].
        \end{equation*}
        We also set
        \begin{equation*}
            \partial_{j}C(\tau;\Delta_{i_{1}}s_{1},...,\Delta_{i_{l}}s_{l},s_{l+1}^{i_{l+1}},...,s_{k}^{i_{k}})=R_{k}(\partial_{j}\overline{C}(\tau;\Delta_{i_{1}}s_{1},...,\Delta_{i_{l}}s_{l},s_{l+1}^{i_{l+1}},...,s_{k}^{i_{k}})).
        \end{equation*}
    \end{definition}

    \begin{remark}\label{Remark boundary Delta}
        Notice that if $i_{l}>1$,
        \begin{multline*}
        \partial_{j}\overline{C}(\tau;\Delta_{i_{1}}s_{1}...\Delta_{i_{l}}s_{l},s_{l+1}^{i_{l+1}},...,s_{k}^{i_{k}})=\partial_{j}\overline{C}(\tau;\Delta_{i_{1}}s_{1}...\Delta_{i_{l-1}}s_{l-1},s_{l}^{i_{l}},...,s_{k}^{i_{k}})\\
            -\partial_{j}\overline{C}(\tau;\Delta_{i_{1}}s_{1}...\Delta_{i_{l-1}}s_{l-1},s_{l}^{i_{l}-1},...,s_{k}^{i_{k}}).
        \end{multline*}
    \end{remark}

    \begin{proposition}\label{Proposition boundary}
        It holds
        \begin{equation*}
            \partial\overline{C}(\tau;\Delta_{i_{1}}s_{1},...,\Delta_{i_{l}}s_{l},s_{l+1}^{i_{l+1}},...,s_{k}^{i_{k}})=\sum_{j=1}^{k}\partial_{j}\overline{C}(\tau;\Delta_{i_{1}}s_{1},...,\Delta_{i_{l}}s_{l},s_{l+1}^{i_{l+1}},...,s_{k}^{i_{k}})
        \end{equation*}
        and the same is true replacing $\overline{C}$ by $C$. In addition,
        \begin{equation*}
            \support(\partial_{j}C(\tau;\Delta_{i_{1}}s_{1},...,\Delta_{i_{l}}s_{l},s_{l+1}^{i_{l+1}},...,s_{k}^{i_{k}}))\subseteq\Sigma^{j}
        \end{equation*}
        for every $1\leq j\leq k$.
    \end{proposition}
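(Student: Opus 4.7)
The plan is to prove both identities by induction on the number $l$ of $\Delta$-indexed arguments. The base case $l=0$ is nothing other than Proposition \ref{Prop 1} applied to the admissible tuple $(s_1^{i_1},\dots,s_k^{i_k})$, so I would start there.

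For the inductive step, assume the identity for $\overline{C}$ holds with $l-1$ $\Delta$-arguments. When $i_l=1$, the defining recursion collapses $\Delta_1 s_l$ to $s_l^1$, so the case reduces immediately to the inductive hypothesis. When $i_l\geq 2$, the recursive definition gives
\begin{align*}
    \overline{C}(\tau;\Delta_{i_1}s_1,\dots,\Delta_{i_l}s_l,s_{l+1}^{i_{l+1}},\dots,s_k^{i_k}) &= \overline{C}(\tau;\Delta_{i_1}s_1,\dots,\Delta_{i_{l-1}}s_{l-1},s_l^{i_l},s_{l+1}^{i_{l+1}},\dots,s_k^{i_k}) \\
    &\quad - \overline{C}(\tau;\Delta_{i_1}s_1,\dots,\Delta_{i_{l-1}}s_{l-1},s_l^{i_l-1},s_{l+1}^{i_{l+1}},\dots,s_k^{i_k}).
\end{align*}
I would apply $\partial$ to both sides, which is linear on flat $\mathbb{Z}_2$-chains, and then invoke the inductive hypothesis on each summand. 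For each $j\in\{1,\dots,k\}$ this produces the difference $\partial_j\overline{C}(\tau;\dots,s_l^{i_l},\dots)-\partial_j\overline{C}(\tau;\dots,s_l^{i_l-1},\dots)$, which by Remark \ref{Remark boundary Delta} is exactly $\partial_j\overline{C}(\tau;\Delta_{i_1}s_1,\dots,\Delta_{i_l}s_l,s_{l+1}^{i_{l+1}},\dots,s_k^{i_k})$. Summing over $j$ closes the induction.

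The identity for $C$ is then obtained by pushforward through $R_k$: since $R_k$ is Lipschitz and pushforward of flat chains commutes with $\partial$, applying $R_k$ to both sides of the $\overline{C}$-identity yields the analogous statement for $C$. For the support claim, note that by the defining formulas $\partial_j\overline{C}(\tau;\Delta_{i_1}s_1,\dots,\Delta_{i_l}s_l,s_{l+1}^{i_{l+1}},\dots,s_k^{i_k})$ is a restriction of either $\tau\llcorner D^j_{\Delta_{i_j}\underline{s}}$ (if $j\leq l$) or $\tau\llcorner D_{s_j^{i_j}}A_j$ (if $j>l$); in either case its support lies in $D_{s_j^{i_j}}A_j\cup D_{s_j^{i_j-1}}A_j\subseteq N_{\varepsilon_j}A_j=N_j$. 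Remark \ref{Rk 1} then gives $R_k(N_j)\subseteq\Sigma^j$, so $\partial_j C=R_k(\partial_j\overline{C})$ is supported in $\Sigma^j$. The argument is essentially bookkeeping once the definitions are set up; I do not anticipate a substantial obstacle, since the only geometric input needed, namely that the intersection pieces $\tau\llcorner[D_{s_l^{i_l}}A_l\cap D_{s_{l'}^{i_{l'}}}A_{l'}]$ vanish, is already built into the admissibility condition on the tuple, and Remark \ref{Remark boundary Delta} packages the inductive step cleanly.
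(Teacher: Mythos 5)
Your proof is correct and follows exactly the route indicated by the paper: the paper's one-line proof is "proceed by induction on $l$, using Remark \ref{Remark boundary Delta} and Proposition \ref{Prop 1}," and your argument is precisely the fleshed-out version of this, with the support claim handled the same way as in the remark preceding Proposition \ref{Prop 1}.
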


    \begin{proof}
        We can proceed by induction in $l$, using Remark \ref{Remark boundary Delta} and Proposition \ref{Prop 1}.
    \end{proof}

    \subsection{Definition of $C(\tau;\Delta_{\underline{i}}\underline{s})_{\underline{D}}$}

    In this section we focus on the case $k=n-2$, which is the relevant one in order to prove the Parametric Coarea Inequality.

    \begin{definition}
    Consider an enumeration $\{D_{i}\}_{i}$ of the closed top dimensional cells of $\Sigma^{n-2}$. If $D=D_{i}$, denote $n(D)=i$. We define
    \begin{equation*}
        \tilde{D}=D\setminus\bigcup_{i=1}^{n(D)-1}D_{i}.
    \end{equation*}
    Notice that the $\tilde{D}$'s are disjoint Borel subsets of $\Sigma^{n-2}$ and their union is $\Sigma^{n-2}$. For each $1\leq l<n-2$ and each top dimensional cell $D$ of $\Sigma^{l}$, we will denote $\tilde{D}=D$ (we introduce this notation to avoid having distinguishing between the cases $l<n-2$ and $l=n-2$ in the following definitions).
\end{definition}

\begin{definition}
    For $1\leq j\leq n-2$, let $\mathcal{F}^{j}$ denote the set of top dimensional faces of $\Sigma^{j}$. Let $\mathcal{F}^{n-1}$ denote the set of $1$-faces of $\Sigma^{n-2}$ and $\mathcal{F}^{n}$ denote the set of its $0$-cells.
\end{definition}

\begin{definition}
    Given $1\leq j\leq n-2$, we denote
    \begin{equation*}
        \interior(\Sigma^{j})=\bigcup_{D\in\mathcal{F}^{j}}\interior(D).
    \end{equation*}
\end{definition}

\begin{definition}
    Let $1\leq j\leq n-2$. We will denote $I_{j}=(i_{j},D_{j})$ a pair where $1\leq i_{j}\leq q$ and
    \[
    D_{j}=\begin{cases}
        * & \text{ if } i_{j}=1\\
        \text{a top dimensional cell of }\Sigma^{j} & \text{ if }2\leq j\leq q.
    \end{cases}
    \]
    We set $\mathcal{I}_{j}$ to be the set of all possible $I_{j}$, this means
    \begin{equation*}
        \mathcal{I}_{j}=\{(1,*)\}\cup\{(i_{j},D_{j}):2\leq i_{j}\leq q,D_{j}\in\mathcal{F}^{j}\}.
    \end{equation*}
    Given $I=(I_{1},...,I_{n-2})$ with $I_{j}\in\mathcal{I}_{j}$ for every $j$, we denote $\underline{i}=(i_{1},...,i_{n-2})$ and $\underline{D}=(D_{1},...,D_{n-2})$. Additionally, given $1\leq k\leq n-2$ we set $\underline{i}_{k}=(i_{1},...,i_{k})$ and $\underline{D}_{k}=(D_{1},...,D_{k})$. We also set
    \begin{equation*}
        \mathcal{I}=\{(I_{1},...,I_{n-2}):I_{j}\in\mathcal{I}_{j}\text{ }\forall\text{ } 1\leq j\leq n-2\}.
    \end{equation*}
\end{definition}

    \begin{definition}
        Let $I\in\mathcal{I}$ and $1\leq k\leq n-2$. We want to define $\overline{C}(\tau;\Delta_{\underline{i}}\underline{s})_{\underline{D}_{k}}$. We do if by induction on $k$.
        If $k=1$, we set
        \[
        \overline{C}(\tau;\Delta_{\underline{i}}\underline{s})_{\underline{D}_{1}}=\begin{cases}
            \overline{C}(\tau;\Delta_{\underline{i}}\underline{s}) & \text{ if }i_{1}=1\\
            \overline{C}(\tau;\Delta_{\underline{i}}\underline{s})\llcorner r_{1}^{-1}(\tilde{D}_{1}) & \text{ if } 2\leq i_{1}\leq q.
        \end{cases}
        \]
        Assume we already defined $\overline{C}(\tau;\Delta_{\underline{i}}\underline{s})_{\underline{D}_{k-1}}$. We let
         \[
        \overline{C}(\tau;\Delta_{\underline{i}}\underline{s})_{\underline{D}_{k}}=\begin{cases}
            \overline{C}(\tau;\Delta_{\underline{i}}\underline{s})_{\underline{D}_{k-1}} & \text{ if }i_{k}=1\\
            \overline{C}(\tau;\Delta_{\underline{i}}\underline{s})_{\underline{D}_{k-1}}\llcorner r_{k}^{-1}(\tilde{D}_{k}) & \text{ if } 2\leq i_{k}\leq q.
        \end{cases}
        \]
       As usual, we set
       \begin{equation*}
           C(\tau;\Delta_{\underline{i}}\underline{s})_{\underline{D}_{k}}=R_{n-2}(\overline{C}(\tau;\Delta_{\underline{i}}\underline{s})_{\underline{D}_{k}}).
       \end{equation*}
        
    \end{definition}

    \begin{remark}\label{Rk sum over Dk}
        If $i_{k}>1$,
        \begin{align*}
            \sum_{D_{k}\in\mathcal{F}^{k}}\overline{C}(\tau;\Delta_{\underline{i}}\underline{s})_{(\underline{D}_{k-1},D_{k})} & =\sum_{D_{k}\in\mathcal{F}^{k}}\overline{C}(\tau;\Delta_{\underline{i}}\underline{s})_{\underline{D}_{k-1}}\llcorner r_{k}^{-1}(\tilde{D}_{k})\\
            & =\overline{C}(\tau;\Delta_{\underline{i}}\underline{s})_{\underline{D}_{k-1}}
        \end{align*}
        and therefore
        \begin{equation*}
            \sum_{D_{k}\in\mathcal{F}^{k}}C(\tau;\Delta_{\underline{i}}\underline{s})_{(\underline{D}_{k-1},D_{k})}=C(\tau;\Delta_{\underline{i}}\underline{s})_{\underline{D}_{k-1}}.
        \end{equation*}
    \end{remark}

    \begin{definition}
        Given $\underline{i}\in\{1,...,q\}^{n-2}$ and $1\leq k\leq n-2$ let $1\leq j_{1}<j_{2}<...<j_{l}\leq k$ be the integers such that  $\{j_{1},...,j_{l}\}=\{1\leq j\leq k:i_{j}\neq 1\}$. Notice that $l=l(k)$ depends on $k$ (and on $\underline{i})$.
    \end{definition}

    \begin{remark}\label{Remark CD}
        Given $I\in\mathcal{I}$, if $l=l(n-2)$ by Remark \ref{Rk sum over Dk}
        \begin{equation*}
            \sum_{D_{j_{1}}\in\mathcal{F}^{j_{1}}}...\sum_{D_{j_{l}}\in\mathcal{F}^{j_{l}}}\overline{C}(\tau;\Delta_{\underline{i}}\underline{s})_{\underline{D}_{n-2}}=\overline{C}(\tau;\Delta_{\underline{i}}\underline{s})
        \end{equation*}
        and the mass functional is additive when applied to both sides of the previous equation. 
    \end{remark}

    \begin{remark}\label{Remark boundary}
        Assume $1\leq k<n-2$ or $k=n-2$ and $i_{n-2}=1$. As $\{r_{l}^{-1}(D)\setminus N_{s_{l+1}^{i_{l+1}}}A_{l+1}:D\text{ top cell of }\Sigma^{l}\}$ are subsets of $M$ with disjoint closures for every $l\leq k$, we can see that
        \begin{align*}
            \partial[\overline{C}(\tau;\Delta_{\underline{i}}\underline{s})_{\underline{D}_{k}}]& =\partial\overline{C}(\tau;\Delta_{\underline{i}}\underline{s})\llcorner\bigg[ \bigcap _{r=1}^{l(k)}r_{j_{r}}^{-1}(D_{j_{r}})\bigg]\\
           & =\partial\overline{C}(\tau;\Delta_{\underline{i}}\underline{s})\llcorner\bigg[ \bigcap _{r=1}^{l(k)}r_{j_{r}}^{-1}(D_{j_{r}}^{0})\bigg]
        \end{align*}
        where $D_{j_{r}}^{0}$ denotes the interior of the cell $D_{j_{r}}$.
    \end{remark}

    \begin{definition}
        For each $I\in\mathcal{I}$, $1\leq k\leq n-2$  and $1\leq j\leq n-2$, we define
        \begin{equation*}
            \partial_{j}\overline{C}(\tau;\Delta_{\underline{i}}\underline{s})_{\underline{D}_{k}}=\partial_{j}\overline{C}(\tau;\Delta_{\underline{i}}\underline{s})\llcorner\bigg[ \bigcap _{r=1}^{l(k)}r_{j_{r}}^{-1}(D_{j_{r}}^{0})\bigg].
        \end{equation*}
        and
        \begin{equation*}
             \partial_{j}C(\tau;\Delta_{\underline{i}}\underline{s})_{\underline{D}_{k}}=R_{n-2}( \partial_{j}\overline{C}(\tau;\Delta_{\underline{i}}\underline{s})_{\underline{D}_{k}})
        \end{equation*}
        By Remark \ref{Remark boundary} and Proposition \ref{Proposition boundary},
        \begin{equation}\label{Eq boundary sum}
            \sum_{j=1}^{n-2}\partial_{j}\overline{C}(\tau;\Delta_{\underline{i}}\underline{s})_{\underline{D}_{k}}=\partial[\overline{C}(\tau;\Delta_{\underline{i}}\underline{s})_{\underline{D}_{k}}]
        \end{equation}
        if $1\leq k< n-2$ or $k=n-2$ and $i_{n-2}=1$. Also notice that for $k=n-2$ and $i_{n-2}\neq 1$, as $r_{n-2}=R_{n-2}|_{N_{n-2}}$ it holds
        \begin{equation}\label{Eq boundary n-2}
            \partial_{j}C(\tau;\Delta_{\underline{i}}\underline{s})_{\underline{D}_{n-2}}=\partial_{j}C(\tau;\Delta_{\underline{i}}\underline{s})_{\underline{D}_{n-3}}\llcorner D_{n-2}^{0}.
        \end{equation}
    \end{definition}

    \begin{remark}\label{Remark boundary sum}
        If $i_{n-2}=1$ and $l=l(n-2)$, for every and $1\leq j\leq n-2$ it holds
        \begin{equation*}
            \sum_{D_{j_{1}}\in\mathcal{F}^{j_{1}}}...\sum_{D_{j_{l}}\in\mathcal{F}^{j_{l}}}\partial_{j}\overline{C}(\tau;\Delta_{\underline{i}}\underline{s})_{\underline{D}_{n-2}}=\partial_{j}\overline{C}(\tau;\Delta_{\underline{i}}\underline{s}).
        \end{equation*}
        The mass functional is additive when applied to both sides of the equation. The same equality holds replacing $\overline{C}$ by $C$. When $i_{n-2}\neq 1$, it holds
        \begin{equation*}
            \sum_{D_{j_{1}}\in\mathcal{F}^{j_{1}}}...\sum_{D_{j_{l}}\in\mathcal{F}^{j_{l}}}\partial_{j} C(\tau;\Delta_{\underline{i}}\underline{s})_{\underline{D}_{n-2}}=\partial_{j} C(\tau;\Delta_{\underline{i}}\underline{s})\llcorner\interior(\Sigma^{n-2})
        \end{equation*}
        and the mass is also additive.
    \end{remark}

    We want an equation similar to (\ref{Eq boundary sum}) for $k=n-2$ and $i_{n-2}\neq 1$. For that purpose, we need the following lemma.

   \begin{lemma}\label{Lemma 1}
        Given a $1$-chain $\eta$ supported in $\Sigma^{n-2}$ and $D$ a closed top cell of $\Sigma^{n-2}$, we can write
        \begin{equation*}
            \partial[\eta\llcorner\tilde{D}]=\partial\eta\llcorner\interior(D)+\mu
        \end{equation*}
        for some $0$-chain $\mu$ supported in $\partial D$.
    \end{lemma}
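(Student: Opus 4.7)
The plan is to split $\tilde D$ as the disjoint union $\operatorname{int}(D) \sqcup (\tilde D \cap \partial D)$ (valid because $\tilde D \setminus \operatorname{int}(D) \subseteq \partial D$), write
\begin{equation*}
    \eta \llcorner \tilde D \;=\; \eta \llcorner \operatorname{int}(D) \;+\; \eta \llcorner (\tilde D \cap \partial D),
\end{equation*}
and take the boundary of both sides. The boundary of the second summand is already a $0$-chain supported on $\partial D$, because the summand itself is a $1$-chain supported on the $1$-complex $\partial D$. It remains to produce a $0$-chain $\mu_0$ on $\partial D$ satisfying the slicing identity
\begin{equation*}
    \partial(\eta \llcorner \operatorname{int}(D)) \;=\; \partial\eta \llcorner \operatorname{int}(D) \;+\; \mu_0;
\end{equation*}
once this is in hand, $\mu := \mu_0 + \partial(\eta \llcorner (\tilde D \cap \partial D))$ finishes the proof.

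For the slicing identity, the approach is PL approximation. Since $\eta$ is a flat $1$-chain in the $2$-complex $\Sigma^{n-2}$, approximate it in the flat norm by PL $1$-chains $\eta^{\varepsilon}$ carried by a sufficiently fine refinement of $T_{n-2}$ in which every closed top cell $D_i$ is a subcomplex. After a generic perturbation, each edge of $\eta^{\varepsilon}$ is either contained in $\operatorname{int}(D)$ or disjoint from $\operatorname{int}(D)$, meeting $\partial D$ only at vertices. For such a PL chain, $\eta^{\varepsilon} \llcorner \operatorname{int}(D)$ is literally the sub-sum of its edges lying in $\operatorname{int}(D)$, and a direct vertex count shows that at every $v \in \operatorname{int}(D)$ the multiplicities of $v$ in $\partial(\eta^{\varepsilon} \llcorner \operatorname{int}(D))$ and in $\partial \eta^{\varepsilon}$ coincide (each edge of the refinement incident to an interior vertex is itself in $\operatorname{int}(D)$). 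The excess $\mu_0^{\varepsilon}$ is thus a $0$-chain on $\partial D$, and passing to the flat limit recovers the identity for $\eta$ with $\mu_0$ still supported on $\partial D$.

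The main obstacle is ensuring that the slicing contribution $\mu_0$ really stays on $\partial D$ in the flat limit rather than leaking into $\operatorname{int}(D)$. This is where one must use that $\partial D$ is a closed PL subcomplex of $\Sigma^{n-2}$ and that $\tilde D$ differs from $D$ only within $\partial D$: in this PL setting the restriction and boundary operators reduce to finite combinatorial bookkeeping on edges and vertices of a refined triangulation, so the closed support of the slicing term in $\partial D$ is automatic and survives passage to the flat limit.
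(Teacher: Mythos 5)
Your proof is correct and follows essentially the same route as the paper's: both reduce to the slicing identity for $\eta\llcorner D$ (or equivalently $\eta\llcorner\interior(D)$), obtained via transversality/PL approximation, and then absorb the discrepancy between $\tilde{D}$ and the cell into a $0$-chain on $\partial D$ — your decomposition $\tilde{D}=\interior(D)\sqcup(\tilde{D}\cap\partial D)$ and the paper's $\tilde{D}=D\setminus E$ with $E\subseteq\partial D$ are trivially interchangeable.
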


    \begin{proof}
        We know
        \begin{equation*}
            \partial[\eta\llcorner D]=\partial\eta\llcorner \interior(D)+\mu_{1}
        \end{equation*}
        for some $0$-chain $\mu_{1}$ supported in $\partial D$ (when $\eta\pitchfork\partial D$ this is true for $\mu_{1}=\eta\llcorner\partial D$, and an arbitrary $\eta\in\mathcal{I}_{1}(\Sigma^{n-2})$ can be obtained as a limit of a sequence $\{\eta_{n}\}_{n}$ in the flat topology with $\eta_{n}\pitchfork\partial D$). We know that $\tilde{D}=D\setminus E$ where $E$ is a (possibly empty) Borel subset of $\partial D$. Thus
        \begin{align*}
            \partial[\eta\llcorner\tilde{D}] & =\partial[\eta\llcorner D]-\partial[\eta\llcorner E]\\
            & =\partial\eta\llcorner\interior(D)+\mu_{1}-\partial[\eta\llcorner E].
        \end{align*}
        
        As $\mu_{1}-\partial[\eta\llcorner E]$ is supported in $\partial D$, we get the desired result.
    \end{proof}

    \begin{definition}
        Applying the previous lemma to the $1$-chain $C(\tau;\Delta_{\underline{i}}\underline{s})_{\underline{D}_{n-3}}$ with $i_{n-2}>1$ and a top cell $D_{n-2}$ of $\Sigma^{n-2}$, we obtain
    \begin{align}\label{Eq 1}
        \partial C(\tau;\Delta_{\underline{i}}\underline{s})_{\underline{D}} & =\partial C(\tau;\Delta_{\underline{i}}\underline{s})_{\underline{D}_{n-3}}\llcorner\interior(D_{n-2})+B(\tau;\Delta_{\underline{i}}\underline{s})_{\underline{D}}
    \end{align}
    for a certain $0$-chain $B(\tau;\Delta_{\underline{i}}\underline{s})_{\underline{D}}$ supported in $\partial D$.
    \end{definition}

    \begin{remark}
    By (\ref{Eq 1}), (\ref{Eq boundary sum}) and (\ref{Eq boundary n-2}), if $i_{n-2}>1$
        \begin{align*}
            \partial C(\tau;\Delta_{\underline{i}}\underline{s})_{\underline{D}} & =\bigg[\sum_{j=1}^{n-2}\partial_{j}C(\tau;\Delta_{\underline{i}}\underline{s})_{\underline{D}_{n-3}}\llcorner\interior(D_{n-2})\bigg]+B(\tau;\Delta_{\underline{i}}\underline{s})_{\underline{D}}\\
            & =\big[\sum_{j=1}^{n-2}\partial_{j}C(\tau;\Delta_{\underline{i}}\underline{s})_{\underline{D}}\big]+B(\tau;\Delta_{\underline{i}}\underline{s})_{\underline{D}}.
        \end{align*}
    \end{remark}

    \subsection{Definition of $A(\tau;\Delta_{\underline{i}}\underline{s})_{\underline{D}}$}\label{Section AD}

    Fix a collection of admissible tuples $\underline{s}=(s_{j}^{i})_{\substack{1\leq j\leq n-2 \\ 1\leq i\leq q}}$ for $\tau\in\mathcal{I}_{1}(M)_{\Sigma}$.

    \begin{definition}
        Given $1\leq l\leq n-2$ and $D$ a top dimensional cell of $\Sigma^{l}$, we denote $q_{D}$ the central point of the cell $D$. For $D$ a $1$-cell of $\Sigma^{n-2}$ we choose $q_{D}$ to be one of its vertices and for $D$ a $0$-cell we just denote $q_{D}=D$.
    \end{definition}

    \begin{definition}
        Given $\eta\in\mathcal{I}_{1}(M)_{\Sigma}$ and $1\leq l\leq n-1$, we define
        \begin{equation*}
            \Cone_{l}(\eta)=\sum_{E\in\mathcal{F}^{l}}\Cone_{q_{E}}(\partial\eta\llcorner E^{0})
        \end{equation*}
        and we set
        \begin{equation*}
            \Cone(\eta)=\sum_{l=1}^{n-1}\Cone_{l}(\eta).
        \end{equation*}
    \end{definition}
    
\begin{definition}\label{Definition of A}
    Given $I=(I_{1},...,I_{n-2})\in\mathcal{I}$ and $1\leq k\leq n-2$ we define
    \begin{equation*}
        A(\tau;\Delta_{\underline{i}}\underline{s})_{\underline{D}_{k}}=C(\tau;\Delta_{\underline{i}}\underline{s})_{\underline{D}_{k}}+\Cone(C(\tau;\Delta_{\underline{i}}\underline{s})_{\underline{D}_{k}}).
    \end{equation*}
\end{definition}

\begin{remark}\label{Rk boundary A}
    Observe that 
    \begin{equation*}
        \support(\partial A(\tau;\Delta_{\underline{i}}\underline{s})_{\underline{D}_{k}})\subseteq\bigcup_{l=1}^{n}\bigcup_{E\in\mathcal{F}^{l}}\{q_{E}\}
    \end{equation*}
    for every $I\in\mathcal{I}$ and every $1\leq k\leq n-2$. Moreover, given $j$ such that $i_{j}\neq 1$ it holds
    \begin{equation*}
        \support(\partial A(\tau;\Delta_{\underline{i}}\underline{s})_{\underline{D}_{k}})\subseteq\bigcup_{l=j}^{n}\bigcup_{\substack{E\in\mathcal{F}^{l} \\ E\text{ face of }D_{j} }}\{q_{E}\}.
    \end{equation*}
\end{remark}

\begin{remark}
    Fix $I\in\mathcal{I}$ and $1\leq k\leq n-2$. By Remark \ref{Remark CD}, if $1\leq j_{1}<...<j_{l}\leq k$ are the positions $1\leq j\leq k$ for which $i_{j}\neq 1$ then
    \begin{equation*}
            \sum_{D_{j_{1}}\in\mathcal{F}^{j_{1}}}...\sum_{D_{j_{l}}\in\mathcal{F}^{j_{l}}} A(\tau;\Delta_{\underline{i}}\underline{s})_{\underline{D}_{k}}= A(\tau;\Delta_{\underline{i}}\underline{s}).
        \end{equation*}
\end{remark}

\subsection{Interpolation formula}\label{Section Interpolation formula}

Fix $\tau\in\mathcal{I}_{1}(M)_{\Sigma}$ and a collection $\underline{s}=(s_{1}^{i},...,s_{n-2}^{i})_{1\leq i\leq q}$ of admissible cuts for $\tau$ with $s_{j}^{1}\geq s_{j}^{2}\geq...\geq s_{j}^{q}$ for every $1\leq j\leq n-2$. We want to define a continuous way to interpolate between the chains $A(\tau;s_{1}^{i_{1}},s_{2}^{i_{2}},...,s_{n-2}^{i_{n-2}})$, $(i_{1},...,i_{n-2})\in\{1,...,q\}^{n-2}$ with control over the mass and the mass of the boundary of the intermediate chains. That will be done by rescaling chains of the form $A(\tau;\Delta_{\underline{i}}\underline{s})_{\underline{D}}$, because given two pairs $\underline{i}=(i_{1},...,i_{n-2})$ and $\underline{j}=(j_{1},...,j_{n-2})$ the difference $A(\tau;s_{1}^{i_{1}},...,s_{n-2}^{i_{n-2}})-A(s_{1}^{j_{1}},...,s_{n-2}^{j_{n-2}})$ can be expressed as a sum of terms of the form $A(\tau;\Delta_{\underline{i}}\underline{s})$ as stated in Remark \ref{Remark CDelta}. For that purpose, given $1\leq k\leq n-2$ and $(I_{1},...,I_{k})\in \mathcal{I}_{1}\times...\times\mathcal{{I}}_{k}$, we will need a coefficient $\mu^{\underline{D}_{k}}_{\underline{i}_{k}}=\mu^{D_{1}...D_{k}}_{i_{1}...i_{k}}\in[0,1]$ and we will denote
\begin{equation*}
    \tilde{\mu}^{\underline{D}_{k}}_{\underline{i}_{k}}=\prod_{j=1}^{i_{k}} \mu^{D_{1}...D_{k}}_{i_{1}...i_{k-1}j}.
\end{equation*}
The coefficient $\mu^{\underline{D}_{k}}_{\underline{i}_{k}}$ (and therefore $\tilde{\mu}^{\underline{D}_{k}}_{\underline{i}_{k}}$ as well) must be equal to $1$ in case $i_{k}=1$. Let $\tilde{T}^{\underline{D}_{k}}_{\underline{i}_{k}}:D_{k}\to D_{k}$ be the homotecy
\begin{equation*}
    x\mapsto\tilde{\mu}^{\underline{D}_{k}}_{\underline{i}_{k}}\cdot x
\end{equation*}
centered at $q_{D_{k}}$ of ratio $\tilde{\mu}^{\underline{D}_{k}}_{\underline{i}_{k}}$. Denote $T^{\underline{D}_{k}}_{\underline{i}_{k}}:D_{k}\to D_{1}$ the composition
\begin{equation*}
    T^{\underline{D}_{k}}_{\underline{i}_{k}}=\tilde{T}^{\underline{D}_{1}}_{\underline{i}_{1}}\circ\tilde{T}^{\underline{D}_{2}}_{\underline{i}_{2}}\circ...\circ\tilde{T}^{\underline{D}_{k}}_{\underline{i}_{k}}.
\end{equation*}

We denote by $\mu$ a collection of coefficients $\mu^{\underline{D}_{k}}_{\underline{i}_{k}}$ for each $1\leq k\leq n-2$ and each $(I_{1},...,I_{k})\in\mathcal{I}_{1}\times...\times\mathcal{I}_{k}$ as above.

\begin{definition}
    Let
    \begin{align*}
        A(\tau;\underline{s},\mu) & =\sum_{I\in\mathcal{I}}T^{\underline{D}_{n-2}}_{\underline{i}_{n-2}}(A(\tau;\Delta_{\underline{i}}\underline{s})_{\underline{D}_{n-2}})\\
        & =\sum_{I_{1}\in\mathcal{I}_{1}}...\sum_{I_{n-2}\in\mathcal{I}_{n-2}}\tilde{T}^{\underline{D}_{1}}_{\underline{i}_{1}}\circ\tilde{T}^{\underline{D}_{2}}_{\underline{i}_{2}}\circ ...\circ\tilde{T}^{\underline{D}_{n-2}}_{\underline{i}_{n-2}}(A(\tau;\Delta_{\underline{i}}\underline{s})_{\underline{D}_{n-2}}).
    \end{align*}
\end{definition}

We want to find an upper bound for $\mass(A(\tau;\underline{s},\mu))$. Notice that by Definition \ref{Definition of A}, we can write
\begin{equation*}
    A(\tau;\underline{s},\mu)=C(\tau;\underline{s},\mu)+\Cone(\tau;\underline{s},\mu)
\end{equation*}
where
\begin{equation*}
    C(\tau;\underline{s},\mu) =\sum_{I\in\mathcal{I}}T^{\underline{D}_{n-2}}_{\underline{i}_{n-2}}(C(\tau;\Delta_{\underline{i}}\underline{s})_{\underline{D}_{n-2}})
\end{equation*}
and
\begin{equation*}
    \Cone(\tau;\underline{s},\mu) =\sum_{I\in\mathcal{I}}T^{\underline{D}_{n-2}}_{\underline{i}_{n-2}}(\Cone(C(\tau;\Delta_{\underline{i}}\underline{s})_{\underline{D}_{n-2}})).
\end{equation*}
Observe that as the $T^{\underline{D}_{n-2}}_{\underline{i}_{n-2}}$ are compositions of homothecies of ratio less or equal than $1$,
\begin{align*}
    \mass(C(\tau;\underline{s},\mu)) & \leq\sum_{I\in\mathcal{I}}\mass(C(\tau;\Delta_{\underline{i}}\underline{s})_{\underline{D}_{n-2}})\\
    & \leq (1+\beta)\sum_{I\in\mathcal{I}}\mass(\overline{C}(\tau;\Delta_{\underline{i}}\underline{s})_{\underline{D}_{n-2}})\\
    & =(1+\beta)\mass(\overline{C}(\tau;s_{1}^{q},...,s_{n-2}^{q}))\\
    & \leq (1+\beta)\mass(\tau)
\end{align*}
where $1+\beta=\prod_{l=1}^{n-2}\frac{1}{1-2\frac{\varepsilon_{l}}{\rho'_{l-1}}}=(1-\frac{2}{\sqrt{n_{p}}})^{-(n-2)}$ is the Lipschitz constant of $R_{n-2}$ (see Section \ref{Section cuts} and observe that $\lim_{p\to\infty}\beta_{p}=0$ and that we get the same $\beta_{p}$ if $M$ has width at least $\rho_{0}$ instead of at least $1$). Hence it suffices to find an upper bound for $\mass(\Cone(\tau;\underline{s},\mu))$.

\begin{definition}
    Given $1\leq k\leq n-2$, let us denote
\begin{equation}\label{Def cone k}
    \Cone_{k}(\tau;\underline{s},\mu)=\sum_{I\in\mathcal{I}}T^{\underline{D}_{n-2}}_{\underline{i}_{n-2}}(\Cone_{k}(C(\tau;\Delta_{\underline{i}}\underline{s})_{\underline{D}_{n-2}}))
\end{equation}
being
\begin{equation*}
    \Cone(\tau;\underline{s},\mu)=\sum_{k=1}^{n-2}\Cone_{k}(\tau;\underline{s},\mu).
\end{equation*}
\end{definition}

\begin{remark}
    Notice that
    \begin{equation}\label{Def 2 cone k}
    \Cone_{k}(\tau;\underline{s},\mu)=\sum_{I_{1}\in\mathcal{I}_{1}}...\sum_{I_{k}\in\mathcal{I}_{k}}T^{\underline{D}_{k}}_{\underline{i}_{k}}(\Cone_{k}(C(\tau;\Delta_{\underline{i}}\underline{s})_{\underline{D}_{k}}))
\end{equation}
    were $\underline{i}=(i_{1},...,i_{k},1,...,1)$.
\end{remark}

\begin{proof}
    Observe that if $I\in\mathcal{I}$ verifies $i_{j}>1$ for some $j>k$, $C(\tau;\Delta_{\underline{i}}\underline{s})_{\underline{D}_{n-2}}$ is supported in $\Sigma^{j}$ and hence
\begin{align*}
    \Cone_{k}(C(\tau;\Delta_{\underline{i}}\underline{s})_{\underline{D}_{n-2}})& =\sum_{E\in\mathcal{F}^{k}}\Cone_{q_{E}}(\partial C(\tau;\Delta_{\underline{i}}\underline{s})_{\underline{D}_{n-2}}\llcorner E^{0}) \\
    & =0
\end{align*}
as $E^{0}\cap\Sigma^{j}=\emptyset$ for each $E\in\mathcal{F}^{k}$. Therefore, in (\ref{Def cone k}) we only need to consider those $I\in\mathcal{I}$ for which $I_{j}=(1,*)$ for every $j>k$ which yields the desired result.
\end{proof}

\begin{definition}
    For each $1\leq j\leq n-2$, we define $\mathcal{I}_{(j)}=\mathcal{I}_{1}\times...\times\mathcal{I}_{j}$. Let $I_{(j)}=(I_{1},...,I_{j})$ denote an element of $\mathcal{I}_{(j)}$.
\end{definition}

\begin{definition}
    Given $\underline{i}_{k-1}=(i_{1},...,i_{k-1})$ we denote
\begin{equation*}
    (\Delta_{\underline{i}_{k-1}},s_{k}^{j})=(\Delta_{i_{1}}s_{1},...,\Delta_{i_{k-1}}s_{k-1},s_{k}^{j},s_{k+1}^{1},...,s_{n-2}^{1}).
\end{equation*}    
\end{definition}

\begin{proposition}\label{Prop cone k bound}
    If $1\leq k\leq n-2$,
    \begin{multline*}
    \mass(\Cone_{k}(\tau;\underline{s},\mu)) \leq \\
    \sum_{I_{(k-1)}\in\mathcal{I}_{(k-1)}}\sum_{D_{k}\in\mathcal{F}_{k}}\sum_{i_{k}=1}^{q}\rho_{k}(\tilde{\mu}^{(\underline{D}_{k-1},D_{k})}_{(\underline{i}_{k-1},i_{k})}-\tilde{\mu}^{(\underline{D}_{k-1},D_{k})}_{(\underline{i}_{k-1},i_{k}+1)})\mass(\partial C(\tau;(\Delta_{\underline{i}_{k-1}}\underline{s},s_{k}^{i_{k}}))_{\underline{D}_{k-1}})\llcorner D_{k}^{0})).
\end{multline*}
\end{proposition}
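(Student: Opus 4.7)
The plan is to perform a telescoping of the sum (\ref{Def 2 cone k}) over $I_k\in\mathcal{I}_k$ for each fixed pair $(I_{(k-1)},D_k)\in\mathcal{I}_{(k-1)}\times\mathcal{F}^k$, extending the argument sketched in Section \ref{Section n=4} for $\Cone_1$ and $\Cone_2$ in the case $n=4$. First I split the sum according to whether $I_k=(1,*)$ (for which $\tilde{T}^{\underline{D}_k}_{\underline{i}_k}$ is the identity) or $I_k=(i_k,D_k)$ with $2\leq i_k\leq q$ and $D_k\in\mathcal{F}^k$. For the $i_k=1$ contribution I use the expansion $\Cone_k(C(\tau;\Delta_{\underline{i}}\underline{s})_{\underline{D}_{k-1}})=\sum_{D_k\in\mathcal{F}^k}\Cone_{q_{D_k}}(\partial C(\tau;\Delta_{\underline{i}}\underline{s})_{\underline{D}_{k-1}}\llcorner D_k^0)$, so this contribution naturally splits into one summand per $D_k$. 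For $i_k\geq 2$, Remark \ref{Rk 1} shows that $\support(C(\tau;\Delta_{\underline{i}}\underline{s})_{\underline{D}_k})\subseteq D_k$, so the only nonvanishing term in $\Cone_k(C(\tau;\Delta_{\underline{i}}\underline{s})_{\underline{D}_k})$ is $\Cone_{q_{D_k}}(\partial C(\tau;\Delta_{\underline{i}}\underline{s})_{\underline{D}_k}\llcorner D_k^0)$.

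Next, for each $i_k\geq 2$ I unwind the recursive definition of $\Delta_{i_k}s_k$ to obtain $\partial C(\tau;\Delta_{\underline{i}}\underline{s})_{\underline{D}_k}\llcorner D_k^0=\beta_{i_k}-\beta_{i_k-1}$, where $\beta_j:=\partial C(\tau;(\Delta_{\underline{i}_{k-1}}\underline{s},s_k^j))_{\underline{D}_{k-1}}\llcorner D_k^0$. This rests on the identity $\overline{C}(\tau;\ldots,\Delta_{i_k}s_k,\ldots)=\overline{C}(\tau;\ldots,s_k^{i_k},\ldots)-\overline{C}(\tau;\ldots,s_k^{i_k-1},\ldots)$ together with the observation that restricting by $\llcorner r_k^{-1}(\tilde D_k)$ before applying $R_{n-2}$ and subsequently by $\llcorner D_k^0$ selects exactly the same data as restricting directly by $\llcorner D_k^0$ after applying $R_{n-2}$, which follows from $R_{n-2}(r_k^{-1}(D'_k))\subseteq D'_k$ (Remark \ref{Rk 1}) for every top cell $D'_k$ of $\Sigma^k$.

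After these reductions the contribution to $\Cone_k(\tau;\underline{s},\mu)$ coming from a fixed pair $(I_{(k-1)},D_k)$ equals
\begin{equation*}
T^{\underline{D}_{k-1}}_{\underline{i}_{k-1}}\Big(\Cone_{q_{D_k}}(\beta_1)+\sum_{i_k=2}^{q}\tilde{\mu}^{(\underline{D}_{k-1},D_k)}_{(\underline{i}_{k-1},i_k)}\cdot\Cone_{q_{D_k}}(\beta_{i_k}-\beta_{i_k-1})\Big),
\end{equation*}
where $\lambda\cdot\eta$ denotes the push-forward of a chain $\eta$ supported in $D_k$ by the homothecy of ratio $\lambda$ centered at $q_{D_k}$. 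Setting $\tilde{\mu}^{(\underline{D}_{k-1},D_k)}_{(\underline{i}_{k-1},1)}=1$ and $\tilde{\mu}^{(\underline{D}_{k-1},D_k)}_{(\underline{i}_{k-1},q+1)}=0$, I use $\mathbb{Z}_2$-linearity of $\Cone_{q_{D_k}}$, distribute the push-forwards, and reindex to collect the terms by $\beta_{i_k}$; each $\beta_{i_k}$ then appears inside a sum of two concentric cones based at $q_{D_k}$, of scales $\tilde{\mu}^{(\underline{D}_{k-1},D_k)}_{(\underline{i}_{k-1},i_k)}$ and $\tilde{\mu}^{(\underline{D}_{k-1},D_k)}_{(\underline{i}_{k-1},i_k+1)}$. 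Since the scales are nonincreasing in $i_k$, mod $2$ cancellation leaves only the annular region whose mass equals $(\tilde{\mu}^{(\underline{D}_{k-1},D_k)}_{(\underline{i}_{k-1},i_k)}-\tilde{\mu}^{(\underline{D}_{k-1},D_k)}_{(\underline{i}_{k-1},i_k+1)})\mass(\Cone_{q_{D_k}}(\beta_{i_k}))$.

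Finally, I conclude using $\mass(\Cone_{q_{D_k}}(\beta))\leq\rho_k\mass(\beta)$ (since every point of $D_k$ lies within distance $\rho_k$ of $q_{D_k}$ in the triangulation $T_k$) and the fact that $T^{\underline{D}_{k-1}}_{\underline{i}_{k-1}}$ is $1$-Lipschitz as a composition of homothecies of ratio $\leq 1$, so applying it does not increase mass. Summing the resulting bound over $I_{(k-1)}\in\mathcal{I}_{(k-1)}$ and $D_k\in\mathcal{F}^k$ yields the claimed inequality. I expect the main obstacle to lie in the second paragraph: carefully verifying that the extra restriction $\llcorner r_k^{-1}(\tilde D_k)$ encoded in the ``$_{\underline{D}_k}$'' notation is transparent once one further restricts to $D_k^0$ after pushing forward by $R_{n-2}$, so that the identification $\partial C(\tau;\Delta_{\underline{i}}\underline{s})_{\underline{D}_k}\llcorner D_k^0=\beta_{i_k}-\beta_{i_k-1}$ is exact rather than only up to negligible error.
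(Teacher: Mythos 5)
Your proposal is correct and takes essentially the same route as the paper's proof. Both split $\Cone_k$ into the $i_k=1$ and $i_k\geq 2$ contributions, use the disjoint-support observation (Remark \ref{Rk sum over Dk} and Remark \ref{Rk 1}) to identify the $\underline{D}_k$-restricted boundary with the $\underline{D}_{k-1}$-restricted one on $D_k^0$, telescope the $i_k$-sum using the monotonicity of the $\tilde{\mu}$ coefficients, and conclude with $\mass(\Cone_{q_{D_k}}(\cdot))\leq\rho_k\mass(\cdot)$ together with the mass-nonincreasing property of $T^{\underline{D}_{k-1}}_{\underline{i}_{k-1}}$.
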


\begin{proof}
We will write
\begin{equation*}
    \Cone_{k}(\tau;\underline{s},\mu)=\Cone_{k}^{1}(\tau;\underline{s},\mu)+\Cone_{k}^{2}(\tau;\underline{s},\mu)
\end{equation*}
where $\Cone_{k}^{1}(\tau;\underline{s},\mu)$ corresponds to adding the terms with $I_{k}=(1,*)$ in (\ref{Def 2 cone k}) and $\Cone^{k}_{2}(\tau;\underline{s},\mu)$ is the sum of all the other terms. Hence it holds
\begin{align*}
    \Cone_{k}^{1}(\tau;\underline{s},\mu) & =\sum_{I_{1}\in\mathcal{I}_{1}}...\sum_{I_{k-1}\in\mathcal{I}_{k-1}}T^{(\underline{D}_{k-1},*)}_{(\underline{i}_{k-1},1)}(\Cone_{k}(C(\tau;(\Delta_{\underline{i}_{k-1}}\underline{s},s_{k}^{1}))_{\underline{D}_{k-1}}))\\
    & =\sum_{I_{(k-1)}\in\mathcal{I}_{(k-1)}}T^{\underline{D}_{k-1}}_{\underline{i}_{k-1}}\circ\Tilde{T}^{(\underline{D}_{k-1},*)}_{(\underline{i}_{k-1},1)}(\Cone_{k}(C(\tau;(\Delta_{\underline{i}_{k-1}}\underline{s},s_{k}^{1}))_{\underline{D}_{k-1}}))\\
    & =\sum_{I_{(k-1)}\in\mathcal{I}_{(k-1)}}T^{\underline{D}_{k-1}}_{\underline{i}_{k-1}}\bigg (\sum_{D_{k}\in\mathcal{F}^{k}} \tilde{T}^{(\underline{D}_{k-1},*)}_{(\underline{i}_{k-1},1)}(\Cone_{q_{D_{k}}}(\partial C(\tau;(\Delta_{\underline{i}_{k-1}}\underline{s},s_{k}^{1}))_{\underline{D}_{k-1}}\llcorner D_{k}^{0}))) \bigg ).
\end{align*}
On the other hand,
\begin{multline*}
    \Cone_{k}^{2}(\tau;\underline{s},\mu)=\\\sum_{I_{1}\in\mathcal{I}_{1}}...\sum_{I_{k-1}\in\mathcal{I}_{k-1}}\sum_{i_{k}=2}^{q}\sum_{D_{k}\in\mathcal{F}_{k}}T^{(\underline{D}_{k-1},D_{k})}_{(\underline{i}_{k-1},i_{k})}(\Cone_{k}(\tau;(\Delta_{\underline{i}_{k-1}}\underline{s},\Delta_{i_{k}}s_{k}))_{(\underline{D}_{k-1},D_{k})})\\
     =\sum_{I_{(k-1)}\in\mathcal{I}_{(k-1)}}\sum_{i_{k}=2}^{q}\sum_{D_{k}\in\mathcal{F}_{k}}T^{(\underline{D}_{k-1},D_{k})}_{(\underline{i}_{k-1},i_{k})}(\Cone_{q_{D_{k}}}(\partial C(\tau;(\Delta_{\underline{i}_{k-1}}\underline{s},\Delta_{i_{k}}s_{k}))_{(\underline{D}_{k-1},D_{k})})\llcorner D_{k}^{0})\\
    =\sum_{I_{(k-1)}\in\mathcal{I}_{(k-1)}}T^{\underline{D}_{k-1}}_{\underline{i}_{k-1}}\bigg (\sum_{D_{k}\in\mathcal{F}_{k}}\sum_{i_{k}=2}^{q}\tilde{T}^{(\underline{D}_{k-1},D_{k})}_{(\underline{i}_{k-1},i_{k})}(\Cone_{q_{D_{k}}}(\partial C(\tau;(\Delta_{\underline{i}_{k-1}}\underline{s},\Delta_{i_{k}}s_{k}))_{\underline{D}_{k-1}})\llcorner D_{k}^{0})\bigg ).
\end{multline*}
In the previous we used that if $j>1$, by Remark \ref{Rk sum over Dk}
\begin{align*}
    \partial C(\tau;(\Delta_{\underline{i}_{k-1}}\underline{s},\Delta_{j}s_{k}))_{\underline{D}_{k-1}}\llcorner D_{k}^{0} 
 & =\sum_{D'_{k}\in\mathcal{F}_{k}}\partial C(\tau;(\Delta_{\underline{i}_{k-1}}\underline{s},\Delta_{j}s_{k}))_{(\underline{D}_{k-1},D'_{k})}\llcorner D_{k}^{0}\\
    & =\partial C(\tau;(\Delta_{\underline{i}_{k-1}}\underline{s},\Delta_{j}s_{k}))_{(\underline{D}_{k-1},D_{k})}\llcorner D_{k}^{0}
\end{align*}
because $\support(\partial C(\tau;(\Delta_{\underline{i}_{k-1}}\underline{s},\Delta_{j}s_{k}))_{(\underline{D}_{k-1},D'_{k})})\subseteq D'_{k}$ and $D'_{k}\cap D_{k}^{0}=\emptyset$ if $D_{k}'\neq D_{k}$. Because of similar reasons,
\begin{align*}
    \Cone_{k}(\tau;(\Delta_{\underline{i}_{k-1}},\Delta_{j}s_{k}))_{(\underline{D}_{k-1},D_{k})}) & =\sum_{D'_{k}\in\mathcal{F}^{k}}\Cone_{q_{D'_{k}}}(\partial C(\tau;(\Delta_{\underline{i}_{k-1}},\Delta_{j}s_{k}))_{(\underline{D}_{k-1},D_{k})}\llcorner \interior(D'_{k}))\\
    & =\Cone_{q_{D_{k}}}(\partial C(\tau;(\Delta_{\underline{i}_{k-1}},\Delta_{j}s_{k}))_{(\underline{D}_{k-1},D_{k})}\llcorner \interior(D_{k})).
\end{align*}
Therefore, setting  $\tilde{T}^{(\underline{D}_{k-1},D_{k})}_{(\underline{i}_{k-1},1)}=\tilde{T}^{(\underline{D}_{k-1},*)}_{(\underline{i}_{k-1},1)}$ (which in fact is just the identity), we can see that
\begin{multline*}
    \Cone_{k}(\tau;\underline{s},\mu)= \\
    \sum_{I_{(k-1)}\in\mathcal{I}_{(k-1)}}T^{\underline{D}_{k-1}}_{\underline{i}_{k-1}}\bigg (\sum_{D_{k}\in\mathcal{F}_{k}}\sum_{i_{k}=1}^{q}\tilde{T}^{(\underline{D}_{k-1},D_{k})}_{(\underline{i}_{k-1},i_{k})}(\Cone_{q_{D_{k}}}(\partial C(\tau;(\Delta_{\underline{i}_{k-1}}\underline{s},\Delta_{i_{k}}s_{k}))_{\underline{D}_{k-1}})\llcorner D_{k}^{0})\bigg ).
\end{multline*}
We can regroup the previous sum as
\begin{multline*}
    \Cone_{k}(\tau;\underline{s},\mu)=\sum_{I_{(k-1)}\in\mathcal{I}_{(k-1)}}T^{\underline{D}_{k-1}}_{\underline{i}_{k-1}}\bigg (\sum_{D_{k}\in\mathcal{F}_{k}}\sum_{i_{k}=1}^{q}\tilde{\mu}^{(\underline{D}_{k-1},D_{k})}_{(\underline{i}_{k-1},i_{k})}\cdot(\Cone_{q_{D_{k}}}(\partial C(\tau;(\Delta_{\underline{i}_{k-1}}\underline{s},s_{k}^{i_{k}}))_{\underline{D}_{k-1}})\llcorner D_{k}^{0})\\
    -\tilde{\mu}^{(\underline{D}_{k-1},D_{k})}_{(\underline{i}_{k-1},i_{k}+1)}\cdot(\Cone_{q_{D_{k}}}(\partial C(\tau;(\Delta_{\underline{i}_{k-1}}\underline{s},s_{k}^{i_{k}}))_{\underline{D}_{k-1}})\llcorner D_{k}^{0})\bigg )
\end{multline*}
where we set $\tilde{\mu}^{(\underline{D}_{k-1},D_{k})}_{(\underline{i}_{k-1},q+1)}=0$. Taking mass and using that the maps $T^{\underline{D}_{k-1}}_{\underline{i}_{k-1}}$ are mass decreasing,
\begin{align*}
    \mass(\Cone_{k}(\tau;\underline{s},\mu)) \leq \\
    \sum_{I_{(k-1)}\in\mathcal{I}_{(k-1)}}\sum_{D_{k}\in\mathcal{F}_{k}}\sum_{i_{k}=1}^{q}(\tilde{\mu}^{(\underline{D}_{k-1},D_{k})}_{(\underline{i}_{k-1},i_{k})}-\tilde{\mu}^{(\underline{D}_{k-1},D_{k})}_{(\underline{i}_{k-1},i_{k}+1)})\mass(\Cone_{q_{D_{k}}}(\partial C(\tau;(\Delta_{\underline{i}_{k-1}}\underline{s},s_{k}^{i_{k}}))_{\underline{D}_{k-1}})\llcorner D_{k}^{0})).
\end{align*}
Using that $\diameter(D_{k})\leq \rho_{k}$, we get
\begin{align*}
    \mass(\Cone_{k}(\tau;\underline{s},\mu)) \leq \\
    \sum_{I_{(k-1)}\in\mathcal{I}_{(k-1)}}\sum_{D_{k}\in\mathcal{F}_{k}}\sum_{i_{k}=1}^{q}\rho_{k}(\tilde{\mu}^{(\underline{D}_{k-1},D_{k})}_{(\underline{i}_{k-1},i_{k})}-\tilde{\mu}^{(\underline{D}_{k-1},D_{k})}_{(\underline{i}_{k-1},i_{k}+1)})\mass(\partial C(\tau;(\Delta_{\underline{i}_{k-1}}\underline{s},s_{k}^{i_{k}}))_{\underline{D}_{k-1}})\llcorner D_{k}^{0})).
\end{align*}
as desired.
\end{proof}

\subsection{Proof of the Parametric Coarea Inequality for rectangular domains}\label{Section Proof of Coarea Ineq}

The goal of this section is to prove Theorem \ref{Thm Parametric Coarea}. Fix a rectangular domain $M$ and a number $\alpha\in(0,1)$. For each $p\in\mathbb{N}$ define a set of numbers $\{\varepsilon_{l}(p),\rho_{l}(p),\rho'_{l}(p):1\leq l\leq n-2\}$ as in Section \ref{Section cuts}. Let $p_{0}=p_{0}(M,\alpha)$ be such that the constructions from the previous subsections can be performed with the $\varepsilon_{l},\rho_{l},\rho'_{l}$ we just defined. Let $p\geq p_{0}$ and let $F:X^{p}\to \mathcal{Z}_{1}(M,\Sigma)$ be a continuous map without concentration of mass. Fix $\varepsilon>0$. By Theorem \ref{Thm delta localized approximation}, we can assume that $F$ is $(N,\delta)$-localized with $N=N(p)$ and $\delta$ sufficiently small to be determined later. We want to construct a new family $F':X\to\mathcal{I}_{1}(M)_{\Sigma}$ such that $\mass(F'(x))$ is not much larger than $\mass(F(x))$ and also $\mass(\partial F'(x))$ is bounded. For that purpose, for each cell $C$ of $X$ denote $\{U^{C}_{i}\}_{i\in I_{C}}$ the (finite) admissible collection of open balls such that
\begin{equation*}
    \sum_{i\in I_{C}}\radius(U^{C}_{i})< \delta
\end{equation*}
and given $x,y\in C$
\begin{equation*}
    \support(F(x)-F(y))\subseteq \bigcup_{i\in I_{C}}U^{C}_{i}.
\end{equation*}
Denote  
\begin{equation*}
    n(X)=\max\{\text{number of top dimensional cells }C\text{ containing }x:x\in X_{0}\}.
\end{equation*}
Notice that $n(X)\leq 2^d$ if $X$ is a cubical subcomplex of $I^{d}(q)$. For each vertex $x\in X_{0}$, using \cite[Lemma~2.4]{GL22} we can construct an $3n(X)\delta$-admissible family of balls $\{U_{i}^{x}\}_{i\in I_{x}}$ such that
\begin{equation*}
    \bigcup_{i\in I_{x}}U^{x}_{i}\supseteq\bigcup_{\substack{C\text{ cell} \\ \text{containing }x}}\bigcup_{i\in I_{C}}U_{i}^{C}.
\end{equation*}
By considering $\delta'=\delta'_{p}>0$ such that $\delta'<<\rho_{n-2}$ (we can set $\delta'=p^{-\alpha'}\rho_{n-2})$ and taking $\delta<\frac{1}{3n(X)}\delta'$, we can define cuts $s_{j}(x)$ for $1\leq j\leq n-2$ for each $x\in X$ such that $s_{j}(x)$ is $j$-admissible for $F(x)$ as defined in Section \ref{Section cuts} and
\begin{equation*}
    D_{s_{j}(x)}A_{j}\cap\bigg[\bigcup_{i\in I_{x}}U^{x}_{i} \bigg]=\emptyset.
\end{equation*}
We also want to have the property that if $x,y\in C$ for a certain cell $C$ of $X$, then $\mass(F(x)-F(y))\leq\varepsilon$. This can be obtained by choosing $\varepsilon$ and $\delta$ accurately in Theorem \ref{Thm delta localized approximation} when approximating our original family by a $\delta$-localized one, in order to guarantee
\begin{equation*}
    \mass(F(x)\llcorner\bigcup_{i\in I_{C}}U_{i}^{C})\leq\frac{\varepsilon}{2}
\end{equation*}
for every $x\in C$ and hence
\begin{equation*}
    \mass((F(x)-F(y))=\mass((F(x)-F(y))\llcorner(\bigcup_{i\in I_{C}}U_{i}^{C}))\leq\varepsilon.
\end{equation*}
We define
\begin{align}\label{F' at X0}
    F'(x) & =A(F(x),s_{1}(x),...,s_{n-2}(x))\\
    & =C(F(x);s_{1}(x),...,s_{n-2}(x))+\Cone(C(F(x);s_{1}(x),...,s_{n-2}(x)))
\end{align}
for $x\in X_{0}$. We will extend the definition of $F'(x)$ skeleton to skeleton using a similar procedure to that in \cite{GL22}.

\textbf{Inductive property for the $l$-skeleton of $X$.}
For every $p'$-dimensional cell $C$ of $X$, $l\leq p'\leq p$, and for every point $y\in C$ the following holds. Given $1\leq j\leq n-2$, let $x_{j}^{1},x_{j}^{2},...,x_{j}^{2^{p'}}$ be an enumeration of the vertices of $C$ such that $s_{j}(x_{j}^{1})\geq s_{j}(x_{j}^{2})\geq...\geq s_{j}(x_{j}^{2^{p'}})$. Denote $s_{j}^{i}=s_{j}(x_{j}^{i})$. Then:  
\begin{enumerate}[label=(C\arabic*)]%[({B}1)]
    \item \label{ite : 3.1} For each $1\leq k\leq n-2$ and each $(I_{1},...,I_{k})\in\mathcal{I}_{(k)}=\mathcal{I}_{1}\times...\times\mathcal{I}_{k}$, there exists a continuous function $\mu^{\underline{D}_{k}}_{\underline{i}_{k}}=\mu^{D_{1}...D_{k}}_{i_{1}...i_{k}}:C\cap X^{l}\to[0,1]$ such that
    \begin{enumerate}[label=(\alph*)]
        \item \label{ite : 3.1.1} \begin{equation*}
            \sum_{k=1}^{n-2}\#\{I_{(k)}\in\mathcal{I}_{(k)}:\mu^{\underline{D}_{k}}_{\underline{i}_{k}}(x)\in(0,1)\}\leq\dim(E(x))
        \end{equation*}
        for every $x\in C\cap X_{l}$, where $E(x)$ is the unique cell of $X$ such that $x\in\interior(E(x))$. Moreover if
        \begin{equation*}
            i_{k}(x)=\max\{i:x^{i}_{k}\in E(x)\}
        \end{equation*}
        then $\mu^{D_{1}...D_{k}}_{i_{1}...i_{k}}(x)=0$ if $i_{k}>i_{k}(x)$ and $\mu^{D_{1}...D_{k}}_{i_{1}...i_{k}}(x)=1$ for every $i_{k}\leq i_{k}(x)$ such that $x_{k}^{i_{k}}\notin E(x)$.
        \item \label{ite : 3.1.2} Using the notation of the previous sections, given $1\leq k\leq n-2$ it holds
       \begin{multline}\label{Ineq cones}
           \sum_{I_{k-1}\in\mathcal{I}_{(k-1)}}\sum_{D_{k}\in\mathcal{F}_{k}}\sum_{i_{k}=1}^{q}(\tilde{\mu}^{\underline{D}_{k}}_{(\underline{i}_{k-1},i_{k})}(x)-\tilde{\mu}^{\underline{D}_{k}}_{(\underline{i}_{k-1},i_{k}+1)}(x))\mass(\partial C(F(y);(\Delta_{\underline{i}_{k-1}}\underline{s},s_{k}^{i_{k}}))_{\underline{D}_{k-1}})\llcorner D_{k}^{0}))\\
           \leq k\frac{\mass(F(y))}{\varepsilon_{k}}
       \end{multline}
       for every $x\in C\cap X^{l}$.
    \end{enumerate}
    \item \label{ite: 3.2} There exists a continuous family $e^{y}:C\cap X^{l}\to\mathcal{Z}_{1}(M)$ of absolute cycles of length at most $ c(l)\varepsilon$ and supported in a $(1+\beta)\delta$-admissible family of open sets of $M$.
\end{enumerate}
The previous are such that if $\underline{s}=(s_{j}^{i})_{1\leq j\leq l,1\leq i\leq 2^{p'}}$ and $\mu(x)$ is given by the $\mu^{\underline{D}_{k}}_{\underline{i}_{k}}(x)$, it holds
\begin{equation}\label{Eq F'}
    F'(x)=e^{y}(x)+A(F(y);\underline{s},\mu(x)).
\end{equation}

We will show by induction that $F'$ can be extended to the $l$-skeleton verifying the Inductive Property above.

\textbf{Base case.} For $l=0$, pick a cell $C$ of dimension $0\leq p'\leq p$ and for each $1\leq j\leq n-2$ enumerate the vertices of $C$ as $x_{j}^{1},...,x_{j}^{2^{p'}}$ with $s(x^{1}_{j})\geq...\geq s(x_{j}^{2^{p'}})$. Define
\[
\mu^{\underline{D}_{k}}_{\underline{i}_{k}}(x)=\begin{cases}
    1 & \text{ if } i_{j}\leq i_{j}(x) \text{ for every }1\leq j\leq k\\
    0 & \text{ otherwise}
\end{cases}
\]
where $i_{j}(x)\in\{1,...,2^{p'}\}$ is the unique one such that $x_{j}^{i_{j(x)}}=x$. Thus given $x\in C\cap X_{0}$ we have $\tilde{\mu}^{\underline{D}_{k}}_{\underline{i}_{k}}(x)=\mu^{\underline{D}_{k}}_{\underline{i}_{k}}(x)$
and hence the sum in \ref{ite : 3.1}\ref{ite : 3.1.2} becomes
\begin{multline*}
    \sum_{I_{k-1}\in\mathcal{I}_{(k-1)}}\sum_{D_{k}\in\mathcal{F}_{k}}\mass(\partial C(F(y);(\Delta_{\underline{i}_{k-1}}\underline{s},s_{k}^{i_{k}(x)}))_{\underline{D}_{k-1}})\llcorner D_{k}^{0}))\\
    =\sum_{I_{k-1}\in\mathcal{I}_{(k-1)}}\mass(\partial C(F(y);(\Delta_{\underline{i}_{k-1}}\underline{s},s_{k}^{i_{k}(x)}))_{\underline{D}_{k-1}})\llcorner\interior(\Sigma^{k}))\\
    \leq \sum_{I_{k-1}\in\mathcal{I}_{(k-1)}}\mass(\partial \overline{C}(F(y);(\Delta_{\underline{i}_{k-1}}\underline{s},s_{k}^{i_{k}(x)}))_{\underline{D}_{k-1}})\llcorner R_{n-2}^{-1}(\interior(\Sigma^{k})))\\
    =\mass(\partial\overline{C}(F(y);s_{1}^{2^{p'}},...,s_{k-1}^{2^{p'}},s_{k}^{i_{k}(x)},s_{k+1}^{1},...,s_{n-2}^{1})\llcorner R_{n-2}^{-1}(\interior(\Sigma^{k}))).
\end{multline*}
Now using the fact that $\partial\overline{C}(F(y);s_{1}^{i_{1}},...,s_{n-2}^{i_{n-2}})=\sum_{j=1}^{n-2}\partial_{j}\overline{C}(F(y);s_{1}^{i_{1}},...,s_{n-2}^{i_{n-2}})$, that $\partial_{j}\overline{C}(F(y);s_{1}^{i_{1}},...,s_{n-2}^{i_{n-2}})$ is supported in $R_{n-1}^{-1}(\Sigma^{j})$ and that $\Sigma^{j}\cap\interior(\Sigma^{k})=\emptyset$ if $j>k$, the previous equals
\begin{align*}
    \sum_{j=1}^{k}\mass(\partial_{j}\overline{C}(F(y);s_{1}^{2^{p'}},...,s_{k-1}^{2^{p'}},s_{k}^{i_{k}(x)},s_{k+1}^{1},...,s_{n-2}^{1})\llcorner R_{n-2}^{-1}(\interior(\Sigma^{k})))\\
    \leq\sum_{j=1}^{k}\mass(\partial_{j}\overline{C}(F(y);s_{1}^{2^{p'}},...,s_{k-1}^{2^{p'}},s_{k}^{i_{k}(x)},s_{k+1}^{1},...,s_{n-2}^{1}))\\
    \leq\sum_{j=1}^{k}\frac{\mass(F(y)}{\varepsilon_{j}}\\
    \leq k\frac{\mass(F(y))}{\varepsilon_{k}}
\end{align*}
as desired. Condition \ref{ite : 3.1}\ref{ite : 3.1.1} is also clearly satisfied. Finally, condition \ref{ite: 3.2} is equivalent to
\begin{equation*}
    A(F(x);\underline{s},\mu(x))=e^{y}(x)+A(F(y);\underline{s},\mu(x))
\end{equation*}
which comes from the fact that the family $F$ is $\delta$-localized and the cuts are made away from $\{U_{i}^{C}\}_{i\in I_{C}}$ (the $(1+\beta)\delta$-admissible families $\{\tilde{U}^{C}_{i}\}_{i\in I_{C}}$ for $F'$ are given by $\tilde{U}_{i}^{C}=R_{n-2}(U_{i}^{C})$ where $c(0)=1+\beta$).

\textbf{Inductive step.} Assume the inductive property holds for the $(l-1)$-skeleton. We want to extend $F'$ to the $l$-skeleton in such a way that the inductive property also holds there. Let $C$ be an $l$-cell of $X$ and $y\in \partial C$. Enumerate the vertices of $C$ as $x_{j}^{1},...,x_{j}^{2^{l}}$ so that $s_{j}(x_{j}^{i})=s_{j}^{i}$ is decreasing in $i$ for every $1\leq j\leq n-2$. By inductive hypothesis and the fact that $\partial C=C\cap X_{l-1}$, we have continuous maps $\mu^{\underline{D}_{k}}_{\underline{i}_{k}}:\partial C\to[0,1]$ and $e^{y}:\partial C\to\mathcal{Z}_{1}(M)$ verifying the properties in \ref{ite : 3.1} and \ref{ite: 3.2} above and such that
\begin{equation*}
    F'(x)=e^{y}(x)+A(F(y);\underline{s},\mu(x))
\end{equation*}
for every $x\in \partial C$.

We want to extend the coefficients $\mu^{\underline{D}_{k}}_{\underline{i}_{k}}$ continuously to $C$. For that purpose, we will use the fact that $C$ is homeomorphic to a cone over $\partial C$. So the idea will be to define functions $\mu^{\underline{D}_{k}}_{\underline{i}_{k}}:\partial C\times I_{0}\to[0,1]$ for a certain interval $I_{0}=[0,N_{0}]$ such that $\mu_{\underline{i}_{k}}^{\underline{D}_{k}}(x,0)=\mu_{\underline{i}_{k}}^{\underline{D}_{k}}(x)$ and $\mu^{\underline{D}_{k}}_{\underline{i}_{k}}(x,N_{0})$ is a constant function of $x$, inducing functions whose domain is a cone over $\partial C$ which will provide the desired extensions to $C$. 

Given $I_{(k-1)}\in\mathcal{I}_{k-1}$ and $D_{k}\in\mathcal{F}^{k}$, let $i_{k}(I_{(k-1)},D_{k})\in\{1,...,2^{l}\}$ be such that 
\begin{multline}\label{Def of ik0}
    \mass(\partial C(F(y);(\Delta_{\underline{i}_{k-1}}\underline{s},s_{k}^{i_{k}(I_{(k-1)},D_{k})}))_{\underline{D}_{k-1}}\llcorner D_{k}^{0}) \\
    =\min\{\mass(\partial C(F(y);(\Delta_{\underline{i}_{k-1}}\underline{s},s_{k}^{i_{k}}))_{\underline{D}_{k-1}}\llcorner D_{k}^{0}):1\leq i_{k}\leq 2^{l}\}.
\end{multline}
To construct the $\mu^{\underline{D}_{k}}_{\underline{i}_{k}}(x,t)$, what we are going to do is given $I_{(k-1)}\in\mathcal{I}_{(k-1)}$ and $D_{k}\in\mathcal{F}^{k}$, contract the coefficient $\mu^{\underline{D}_{k}}_{(\underline{i}_{k-1},i_{k})}(x)$ to $1$ if $i_{k}\leq i_{k}(I_{(k-1)},D_{k})$ and to $0$ otherwise (this generalizes the strategy in \cite{GL22}[Section~4.5]). We want to contract them one at a time (i.e. use different intervals of time inside $[0,N_{0}]$ for different values of $k$, $\underline{i}_{k}$ and $\underline{D}_{k}$) to ensure \ref{ite : 3.1}\ref{ite : 3.1.1} and in a way that makes the sum (\ref{Ineq cones}) non increasing in $t$ when replacing $x$ by $(x,t)$ in order to guarantee \ref{ite : 3.1}\ref{ite : 3.1.2}.

To carry out the previous, consider the set
\begin{equation*}
    A'=\{(k,I_{(k-1)},D_{k}):1\leq k\leq n-2,I_{(k-1)}\in\mathcal{I}_{(k-1)},D_{k}\in\mathcal{F}^{k}\},
\end{equation*}
denote $N'_{0}=\#A'$, fix an order on $A'$ and given $(k,I_{(k-1)},D_{k})\in A'$ let $m'(k,I_{(k-1)},D_{k})$ be the position of the element $(k,I_{(k-1)},D_{k})$ under that order. We set $N_{0}=(2^{l}-1)N'_{0}$, observe that if
\begin{equation*}
    A=A'\times\{2,...,2^{l}\}
\end{equation*}
then $N_{0}=\# A$. Given $(k,I_{(k-1)},D_{k})\in A'$ we consider an order $o_{(k,I_{(k-1)},D_{k})}$ on $\{2,...,2^{l}\}$ given by
\begin{equation*}
    i_{k}(I_{(k-1)},D_{k})+1,i_{k}(I_{(k-1)},D_{k}),...,3,2,i_{k}(I_{(k-1)},D_{k})+2,i_{k}(I_{(k-1)},D_{k})+3,...,2^{l}.
\end{equation*}

We endow $A$ with the order $m(k,I_{(k-1)},D_{k},i_{k})<m(k',I'_{(k'-1)},D'_{k'},i'_{k'})$ if either $m'(k,I_{(k-1)},D_{k})<m'(k',I'_{(k'-1)},D'_{k'})$ or $(k,I_{(k-1)},D_{k})=(k',I'_{(k'-1)},D'_{k'})$ and \linebreak $o_{(k,I_{(k-1)},D_{k})}(i_{k})<o_{(k,I_{(k-1)},D_{k})}(i'_{k})$. We denote $m(k,I_{(k-1)},D_{k},i_{k})$ the position of the element $(k,I_{(k-1)},D_{k},i_{k})\in A$ under that order.

Now we define $\mu^{\underline{D}_{k}}_{\underline{i}_{k}}(x,t)$ for each $t\in[0,N_{0}]$. We only deal with the case $i_{k}\neq 1$ and set $\mu^{\underline{D}_{k}}_{(\underline{i}_{k-1},1)}(x,t)=1$. Hence the set $A$ with the order $m$ enumerates all the $\mu^{\underline{D}_{k}}_{\underline{i}_{k}}$ we must define. Given $(k,I_{(k-1)},D_{k},i_{k})$ we denote $m=m(k,I_{(k-1)},D_{k},i_{k})$ for short.
\begin{enumerate}
    \item For $0\leq t\leq m-1=m(k,I_{(k-1)},D_{k},i_{k})-1$, we set
    \begin{equation*}
        \mu^{\underline{D}_{k}}_{\underline{i}_{k}}(x,t)=\mu^{\underline{D}_{k}}_{\underline{i}_{k}}(x).
    \end{equation*}
    \item For $m(k,I_{(k-1)},D_{k},i_{k})-1\leq t\leq m(k,I_{(k-1)},D_{k},i_{k})$, we set
    \small
    \[
    \mu^{\underline{D}_{k}}_{\underline{i}_{k}}(x,t)=\begin{cases}
        1+t-m+(m-t)\mu^{\underline{D}_{k}}_{\underline{i}_{k}}(x) & \text{ if } i_{k}\leq i_{k}(I_{(k-1)},D_{k})\\
        (m-t)\mu^{\underline{D}_{k}}_{\underline{i}_{k}}(x) & \text{ otherwise}.
    \end{cases}
    \]
    \normalsize
    \item For $m(k,I_{(k-1)},D_{k},i_{k})\leq t\leq N_{0}$ we set
    \[
    \begin{cases}
        1 & \text{ if } i_{k}\leq i_{k}(I_{(k-1)},D_{k})\\
        0 & \text{ otherwise}.
    \end{cases}
    \]
\end{enumerate}

Extend $e^{y}:\partial C\to\mathcal{Z}_{1}(\bigcup_{i\in I_{C}}U^{C}_{i})$ to $C$ by radial contraction on each ball $U^{C}_{i}$ and define
\begin{equation*}
    F'(x)=e^{y}(x)+A(F(y);\underline{s},\mu(x)).
\end{equation*}
The previous extends $F'$ to the $l$-skeleton. 

\begin{lemma}
    Property \ref{ite : 3.1}\ref{ite : 3.1.1} holds on each $l$-dimensional cell $C$ of $X$.
\end{lemma}

\begin{proof}
    It suffices to verify it for $x\in\interior(C)$ as for $x\in\partial C$ it is given by inductive hypothesis. From the previous construction we can identify $x$ with $(x',t)$ for some $x'\in\partial C$ and some $t\in(0,1]$ being $\dim(B(x'))\leq l-1$. We know that
     \begin{equation*}
            \sum_{k=1}^{n-2}\#\{I_{(k)}\in\mathcal{I}_{(k)}:\mu^{\underline{D}_{k}}_{\underline{i}_{k}}(x')\in(0,1)\}\leq\dim(B(x')).
        \end{equation*}
     Let $m$ be the unique integer such that $m-1\leq t < m+1$ and let $(k,I_{(k-1)},D_{k},i_{k})$ be the unique element of $A$ such that $m=m(k,I_{(k-1)},D_{k},i_{k})$. The only $(k',I'_{(k'-1)},D'_{k'},i'_{k'})\in A'$ such that $\mu^{\underline{D}'_{k'}}_{\underline{i}'_{k'}}(x',t)$ could be in $(0,1)$ without having $\mu^{\underline{D}'_{k'}}_{\underline{i}'_{k'}}(x')\in (0,1)$ is $(k',I'_{(k'-1)},D'_{k'},i'_{k'})=(k,I_{(k-1)},D_{k},i_{k})$. This implies
     \begin{equation*}
            \sum_{k=1}^{n-2}\#\{I_{(k)}\in\mathcal{I}_{(k)}:\mu^{\underline{D}_{k}}_{\underline{i}_{k}}(x)\in(0,1)\}\leq\dim(B(x'))+1\leq l=\dim(E(x)).
        \end{equation*}
      The property that $\mu^{\underline{D}_{k}}_{\underline{i}_{k}}(x)=0$ if $i_{k}>i_{k}(x)$ and $\mu^{\underline{D}_{k}}_{\underline{i}_{k}}(x)=1$ when $i_{k}\leq i_{k}(x)$ is such that $x_{k}^{i_{k}}\notin E(x)$ follows from the inductive hypothesis if $x\in\partial C$ and is vacuously true for $x\in\interior(C)$.
\end{proof}

\begin{lemma}
    Property \ref{ite : 3.1}\ref{ite : 3.1.2} holds on each $l$-dimensional the cell $C$ of $X$.
\end{lemma}

\begin{proof}
    We identify $\tilde{x}\in C$ with $(x,t)\in\partial C\times[0,N_{0}]$ as above. We fix $x\in\partial C$ and $1\leq k\leq n-2$, and we consider
    \begin{equation*}
        g(t)= \sum_{I_{k-1}\in\mathcal{I}_{(k-1)}}\sum_{D_{k}\in\mathcal{F}_{k}}\sum_{i_{k}=1}^{q}(\tilde{\mu}^{\underline{D}_{k}}_{(\underline{i}_{k-1},i_{k})}(x,t)-\tilde{\mu}^{\underline{D}_{k}}_{(\underline{i}_{k-1},i_{k}+1)}(x,t))\mass(\partial C(F(y);(\Delta_{\underline{i}_{k-1}}\underline{s},s_{k}^{i_{k}}))_{\underline{D}_{k-1}})\llcorner D_{k}^{0}))
    \end{equation*}
    for $t\in[0,N_{0}]$. We know that $g(0)\leq k\frac{\mass(F(y))}{\varepsilon_{k}}$ so it is sufficient to show that $g(t)$ is decreasing as a function of $t$. But we claim that for each $I_{k-1}\in\mathcal{I}_{(k-1)}$ and each $D_{k}\in\mathcal{F}^{k}$ the function
    \begin{equation*}
        g_{I_{(k-1)},D_{k}}(t)=\sum_{i_{k}=1}^{q}(\tilde{\mu}^{\underline{D}_{k}}_{(\underline{i}_{k-1},i_{k})}(x,t)-\tilde{\mu}^{\underline{D}_{k}}_{(\underline{i}_{k-1},i_{k}+1)}(x,t))\mass(\partial C(F(y);(\Delta_{\underline{i}_{k-1}}\underline{s},s_{k}^{i_{k}}))_{\underline{D}_{k-1}})\llcorner D_{k}^{0}))
    \end{equation*}
    is decreasing. Observe that $g_{I_{(k-1)},D_{k}}(t)$ is constant on each of the two components of the complement of
    \begin{equation*}
        [(m'(k,I_{(k-1)},D_{k})-1)(2^{l}-1),m'(k,I_{(k-1)},D_{k})(2^{l}-1)]
    \end{equation*}
    and that the previous interval is the union of the unit intervals of the form $[m(k,I_{(k-1)},D_{k},i_{k})-1,m(k,I_{(k-1)},D_{k},i_{k})]$ for $2\leq i_{k}\leq 2^{l}$ where the order of such intervals within $[(m'(k,I_{(k-1)},D_{k})-1)(2^{l}-1),m'(k,I_{(k-1)},D_{k})(2^{l}-1)]$ is given by $o_{(k,I_{(k-1)},D_{k})}$. Thus it suffices to check that $g_{I_{(k-1)},D_{k}}(t)$ decreases along each of the previously mentioned unit intervals. In order to do that, let us first observe the following. Suppose a certain coefficient $\mu^{\underline{D}_{k}}_{(\underline{i}_{k-1},i_{k}^{*})}(x,t)$ is linearly changed to $s\in\{0,1\}$ for $t$ along a certain unit interval $I'$ and all other coefficients are kept constant. Then
    \begin{enumerate}
        \item If $s=0$, we can say the following about the behaviour of $\tilde{\mu}^{\underline{D}_{k}}_{(\underline{i}_{k-1},i_{k})}(x,t)-\tilde{\mu}^{\underline{D}_{k}}_{(\underline{i}_{k-1},i_{k}+1)}(x,t)$ as a function of $t\in I'$.
        \begin{enumerate}
            \item If $i_{k}<i_{k}^{*}-1$, it is constant.
            \item If $i_{k}=i_{k}^{*}-1$, it increases.
            \item If $i_{k}\geq i_{k}^{*}$, then it is multiplied by a factor of $\frac{\mu^{\underline{D}_{k}}_{(\underline{i}_{k},i_{k}^{*})}(x,t)}{\mu^{\underline{D}_{k}}_{(\underline{i}_{k},i_{k}^{*})}(x,0)}$ with respect to its value at $t=0$ and hence it decreases.
        \end{enumerate}
        
        \item If $s=1$, we can say the following about the behaviour of $\tilde{\mu}^{\underline{D}_{k}}_{(\underline{i}_{k-1},i_{k})}(x,t)-\tilde{\mu}^{\underline{D}_{k}}_{(\underline{i}_{k-1},i_{k}+1)}(x,t)$ as a function of $t\in I'$.
        \begin{enumerate}
            \item If $i_{k}<i_{k}^{*}-1$, it is constant.
            \item If $i_{k}=i_{k}^{*}-1$, it decreases.
            \item If $i_{k}\geq i_{k}^{*}$, then it is multiplied by a factor of $\frac{\mu^{\underline{D}_{k}}_{(\underline{i}_{k},i_{k}^{*})}(x,t)}{\mu^{\underline{D}_{k}}_{(\underline{i}_{k},i_{k}^{*})}(x,0)}$ with respect to its value at $t=0$ and hence it increases.
        \end{enumerate}
    \end{enumerate}
    
    Now let us show that $g_{I_{(k-1)},D_{k}}(t)$ is decreasing in $t$ along each interval $I'=[m(k,I_{(k-1)},D_{k},i_{k}^{*})-1,m(k,I_{(k-1)},D_{k},i_{k}^{*})]$. Denote $i_{k}^{0}=i_{k}(I_{(k-1)},D_{k})$ for short. We have three cases.

    \begin{enumerate}
        \item If $i_{k}^{*}=i_{k}^{0}+1$, $s=0$ and thus $\tilde{\mu}^{\underline{D}_{k}}_{(\underline{i}_{k-1},i_{k})}(x,t)-\tilde{\mu}^{\underline{D}_{k}}_{(\underline{i}_{k-1},i_{k}+1)}(x,t)$ only increases for $i_{k}=i_{k}^{0}$. As $\sum_{i_{k}=1}^{2^{l}}\tilde{\mu}^{\underline{D}_{k}}_{(\underline{i}_{k-1},i_{k})}(x,t)-\tilde{\mu}^{\underline{D}_{k}}_{(\underline{i}_{k-1},i_{k}+1)}(x,t)$ is constant (in fact, it is equal to $1$), by (\ref{Def of ik0}) we deduce that  the function $g_{I_{(k-1)},D_{k}}(t)$ is decreasing for $t\in I'$.
        \item If $2\leq i_{k}^{*}\leq i_{k}^{0}$, we have that $s=1$ and also that
        \begin{enumerate}
            \item $\tilde{\mu}^{\underline{D}_{k}}_{(\underline{i}_{k-1},i_{k})}(x,t)-\tilde{\mu}^{\underline{D}_{k}}_{(\underline{i}_{k-1},i_{k}+1)}(x,t)=0$ for all $t\in I'$ and all $i_{k}^{*}\leq i_{k}<i_{k}^{0}$ because $\mu^{\underline{D}_{k}}_{(\underline{i}_{k-1},i_{k})}(x,t)=1$ for $i_{k}^{*}<i_{k}\leq i_{k}^{0}$ and $t\in I'$.
            \item $\tilde{\mu}^{\underline{D}_{k}}_{(\underline{i}_{k-1},i_{k})}(x,t)-\tilde{\mu}^{\underline{D}_{k}}_{(\underline{i}_{k-1},i_{k}+1)}(x,t)=0$ for all $i_{k}>i_{k}^{0}$ because $\mu^{\underline{D}_{k}}_{(\underline{i}_{k-1},i_{k}^{0}+1)}(x,t)=0$ for all $t\in I'$.
        \end{enumerate}
        Thus the only value of $i_{k}$ for which $\tilde{\mu}^{\underline{D}_{k}}_{(\underline{i}_{k-1},i_{k})}(x,t)-\tilde{\mu}^{\underline{D}_{k}}_{(\underline{i}_{k-1},i_{k}+1)}(x,t)$ increases is $i_{k}=i_{k}^{0}$. Using again the fact that $\sum_{i_{k}=1}^{2^{l}}\tilde{\mu}^{\underline{D}_{k}}_{(\underline{i}_{k-1},i_{k})}(x,t)-\tilde{\mu}^{\underline{D}_{k}}_{(\underline{i}_{k-1},i_{k}+1)}(x,t)$ is constantly equal to $1$ we can see that $g_{I_{(k-1)},D_{k}}(t)$ is decreasing in $I'$.
        \item If $i_{k}^{*}>i_{k}^{0}+1$, $s=0$ and thus $\tilde{\mu}^{\underline{D}_{k}}_{(\underline{i}_{k-1},i_{k})}(x,t)-\tilde{\mu}^{\underline{D}_{k}}_{(\underline{i}_{k-1},i_{k}+1)}(x,t)$ is constant for $i_{k}\leq i_{k}^{0}$ and it is equal to $0$ for $i_{k}> i_{k}^{0}$ as $\mu^{\underline{D}_{k}}_{(\underline{i}_{k-1},i_{k}^{0}+1)}(x,t)=0$. Therefore $g_{I_{(k-1)},D_{k}}(t)$ is decreasing in $I'$.
    \end{enumerate}
    
\end{proof}

Thus given an $l$-cell $C$ of $X$, properties \ref{ite : 3.1}\ref{ite : 3.1.1} and \ref{ite : 3.1}\ref{ite : 3.1.2} follow and we get the desired extension
\begin{equation*}
    F'(x)=e^{y}(x)+A(F(y);\underline{s},\mu(x))
\end{equation*}
for each $x\in C$. Picking a different $y'\in C$ will lead to a new family $e^{y'}:X\to \mathcal{Z}_{1}(\bigcup_{i\in I_{C}}U^{C}_{i})$ such that (\ref{Eq F'}) holds because the map $F$ we start with is $\delta$-localized and the cuts are done away from the admissible family $\{U_{i}^{C}\}_{i}$. Notice that
\begin{equation*}
    \mass(e^{y}(x)-e^{y'}(x))\leq\varepsilon
\end{equation*}
and therefore $\mass(e^{y'}(x))\leq c(k+1)\varepsilon$ for every $y'\in C$.

For an arbitrary $p'$-cell $\tilde{C}$ of $X$ with $l\leq p'\leq p$, we can proceed as follows. Given $x\in \tilde{C}\cap X_{l}$, if $C=E(x)$ we know $\dim(C)\leq l$ and thus we have the coefficients $\mu^{\underline{D}_{k}}_{(\underline{i}_{k-1},i_{k})}(x)$ already defined in the cell $C$. That allows us to define $\mu^{\underline{D}_{k}}_{(\underline{i}_{k-1},i_{k})}(x)$ for the values of $i_{k}$ such that $x_{k}^{i_{k}}\in C$. For the other vertices, we choose either $0$ or $1$ according to \ref{ite : 3.1}\ref{ite : 3.1.2}. Under this definition, it follows that \ref{ite : 3.1} and \ref{ite: 3.2} hold for $\tilde{C}$. This completes the proof of the inductive step.

By induction, we obtain a map $F':X^{p}\to\mathcal{I}_{1}(M)_{\Sigma}$ which is continuous in the flat topology and satisfies that given a cell $C$ of $X$ and $y\in C$ it holds
\begin{equation*}
    F'(x)=e^{y}(x)+A(F(y);\underline{s},\mu(x))
\end{equation*}
for every $x\in C$. Now we need to give upper bounds for $\mass(F'(x))$ and for $\mass(\partial F'(x))$, using the inductive construction and the choice of the coefficients $\rho_{l}$ and $\varepsilon_{l}$ for each $1\leq l\leq n-2$. As $e^{y}(x)$ has length bounded by $c(p)\varepsilon$ by hypothesis, it suffices to bound the mass of $A(F(y);\underline{s},\mu(x))$ and of its boundary. We know that
\begin{equation*}
    A(F(y);\underline{s},\mu)=C(F(y);\underline{s},\mu(x))+\sum_{k=1}^{n-1}\Cone_{k}(F(y);\underline{s},\mu(x)).
\end{equation*}
We have previously seen that
\begin{equation*}
    \mass(C(F(y);\underline{s},\mu(x)))\leq (1+\beta)\mass(F(y))
\end{equation*}
where
\begin{equation*}
    1+\beta=(1-\frac{2}{\sqrt{n_{p}}})^{-(n-2)}
\end{equation*}
and that for each $1\leq k\leq n-2$
\begin{multline*}
    \mass(\Cone_{k}(F(y);\underline{s},\mu(x))\leq \\
    \sum_{I_{k-1}\in\mathcal{I}_{(k-1)}}\sum_{D_{k}\in\mathcal{F}_{k}}\sum_{i_{k}=1}^{q}\rho_{k}(\tilde{\mu}^{\underline{D}_{k}}_{(\underline{i}_{k-1},i_{k})}(x)-\tilde{\mu}^{\underline{D}_{k}}_{(\underline{i}_{k-1},i_{k}+1)}(x))\mass(\partial C(\tau;(\Delta_{\underline{i}_{k-1}}\underline{s},s_{k}^{i_{k}}))_{\underline{D}_{k-1}})\llcorner D_{k}^{0}))\\
    \leq k\rho_{k}\frac{\mass(F(y))}{\varepsilon_{k}}
\end{multline*}
by Proposition \ref{Prop cone k bound} and (\ref{Ineq cones}). It remains to bound $\mass(\Cone_{n-1}(F(y);\underline{s},\mu(x))$.

Recall that
\begin{equation*}
    \Cone_{n-1}(F(y),\underline{s},\mu(x))=\sum_{I\in\mathcal{I}}T^{\underline{D}_{n-2}}_{\underline{i}_{n-2}}(\Cone_{n-1}(C(F(y),\Delta_{\underline{i}}\underline{s})_{\underline{D}_{n-2}})).
\end{equation*}
We divide the terms of the previous sum into two groups
\begin{enumerate}
    \item Those $I\in\mathcal{I}$ for which $T^{\underline{D}_{n-2}}_{\underline{i}_{n-2}}(x)=id$ or $T^{\underline{D}_{n-2}}_{\underline{i}_{n-2}}(x)=0$. They are supported in $\Sigma^{n-1}$, the $1$-skeleton of $\Sigma^{n-2}$.
    \item The collection $\mathcal{I}(x)=\{I\in\mathcal{I}:T^{\underline{D}_{n-2}}_{\underline{i}_{n-2}}(x)\neq 0,id\}$. For such $I$, let $1\leq k\leq n-2$ be the smallest integer such that $T^{\underline{D}_{k}}_{\underline{i}_{k}}(x)=T^{\underline{D}_{n-2}}_{\underline{i}_{n-2}}(x)$ (i.e. the unique integer such that $\mu^{\underline{D}_{k}}_{\underline{i}_{k}}(x)\in(0,1)$ and $\mu^{\underline{D}_{j}}_{\underline{i}_{j}}(x)\in \{0,1\}$ for all $j>k$) and consider the map $\phi:\mathcal{I}(x)\to \mathcal{A}(x)$ which assigns $I\mapsto(\underline{i}_{k},\underline{D}_{k})$, where
    \begin{equation*}
        \mathcal{A}(x)=\bigcup_{k=1}^{n-2}\{(\underline{i}_{k},\underline{D}_{k})\in\mathcal{I}_{(k)}:\mu^{\underline{D}_{k}}_{\underline{i}_{k}}(x)\in(0,1)\}.
    \end{equation*}
\end{enumerate}
Thus we can see that $\Cone_{n-1}(F(y);\underline{s},\mu(x))$ is supported in
\begin{equation*}
    \Sigma(x)=\Sigma^{n-1}\cup\bigcup_{(\underline{D}_{k},\underline{i}_{k})\in\mathcal{A}(x)}T^{\underline{D}_{k}}_{\underline{i}_{k}}(D_{k}\cap\Sigma^{n-1}).
\end{equation*}
We want to find an upper bound for $\mass(\Sigma(x))$. In order to do that, we prove the following lemma. 

\begin{lemma}\label{Lemma number of cells}
    Let $1\leq k<l\leq n-2$ and $D\in\mathcal{F}^{k}$. Then there exists a constant $C(n)$ depending only on $n$ such that
    \begin{equation*}
        \#\{\text{top-cells of }\Sigma^{l}\text{ contained in }D\}\leq C(n)p^{\alpha}.
    \end{equation*}
\end{lemma}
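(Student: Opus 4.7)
The plan is to bound $N_j := \#\{\text{top-cells of } \Sigma^j \text{ contained in } D\}$ by induction on $j$, starting from $j = k$ where $N_k = 1$ (the single top-cell of $\Sigma^k$ contained in $D$ is $D$ itself), and iterating up to $j = l$.

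For the inductive step from $j$ to $j+1$, I use that $\Sigma^{j+1}$ is the $(n-j-1)$-skeleton of $(\Sigma^j, T_j)$, further refined by the triangulation $T_{j+1}$ of width $\rho_{j+1}$. Each top $(n-j)$-simplex of $T_j$ that is contained in $D$ has at most $n-j+1$ faces of dimension $n-j-1$, each of diameter at most $\rho_j$. Since the triangulations are $\mu$-bilipschitz to Euclidean simplicial complexes with $\mu$ close to $1$, each such $(n-j-1)$-face is refined by $T_{j+1}$ into at most $C_0(n)(\rho_j/\rho_{j+1})^{n-j-1}$ top cells by standard volume counting. This gives $N_{j+1} \leq C_1(n)\, N_j\, (\rho_j/\rho_{j+1})^{n-j-1}$.

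Iterating and substituting $\rho_j/\rho_{j+1} = p^{\alpha'}$ gives
\begin{equation*}
N_l \leq C_1(n)^{l-k}\, p^{\alpha' \sum_{j=k}^{l-1}(n-j-1)} = C_1(n)^{l-k}\, p^{\alpha'(l-k)(2n-k-l-1)/2},
\end{equation*}
where I used the arithmetic identity $\sum_{j=k}^{l-1}(n-j-1) = (l-k)(2n-k-l-1)/2$.

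The remaining step, and the main obstacle, is to verify that the exponent is at most $\alpha$. Since $\alpha' = 2\alpha/((n-1)(n-2))$, this reduces to the elementary inequality $(l-k)(2n-k-l-1) \leq (n-1)(n-2)$ for all $1 \leq k < l \leq n-2$. Viewing the left side as a downward parabola in $l$ with $k$ fixed, it is maximized at the non-integer point $l = (2n-1)/2$, so among integers $l \leq n-2$ the maximum is attained at $l = n-2$, where the value becomes $(n-k-2)(n-k+1)$; this expression is decreasing in $k$ for $k \leq (n-1)/2$, hence its maximum over $k \geq 1$ is $(n-3)n = n^2-3n$, which is strictly less than $(n-1)(n-2) = n^2-3n+2$. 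Combining, $N_l \leq C_1(n)^{n-2}\, p^\alpha =: C(n)\, p^\alpha$, as claimed.
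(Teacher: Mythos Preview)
Your argument is correct and follows the same inductive scheme as the paper: bound the number of top cells of $\Sigma^{j+1}$ inside a top cell of $\Sigma^{j}$ by a constant times $(\rho_{j}/\rho_{j+1})^{n-j-1}$ and iterate. The only difference is in how the exponent is handled: the paper simply enlarges the product from $k\le j\le l-1$ to the full range $1\le j\le n-3$, giving $\sum_{j=1}^{n-3}(n-j-1)\le \tfrac{(n-1)(n-2)}{2}$ directly, whereas you compute $(l-k)(2n-k-l-1)/2$ exactly and then maximize over $k,l$. Your optimization is fine but there is a small slip: you assert $(n-k-2)(n-k+1)$ is decreasing in $k$ for $k\le (n-1)/2$, which as stated does not cover the full range $1\le k\le n-3$; in fact, writing $u=n-k$ one has $(u-2)(u+1)=u^{2}-u-2$, which is increasing in $u$ for $u>1/2$, so the expression is decreasing in $k$ for all $k<n-1/2$, and in particular on the whole relevant range. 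With that correction your conclusion stands.
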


\begin{proof}
    Observe that there are at most $2(n-l')(\frac{\rho_{l'}}{\rho_{l'+1}})^{n-l'-1}$ top faces of $\Sigma^{l'+1}$ contained in each top face $D'$ of $\Sigma^{l'}$. Therefore
    \begin{align*}
        \#\{\text{top-cells of }\Sigma^{l}\text{ contained in }D\} & \leq\prod_{l'=k}^{l-1}2(n-l')(\frac{\rho_{l'}}{\rho_{l'+1}})^{n-l'-1}\\
        & \leq\prod_{l'=1}^{n-3}2(n-l')(\frac{\rho_{l'}}{\rho_{l'+1}})^{n-l'-1}\\
        & \leq C(n)\prod_{l'=1}^{n-3}p^{\alpha'(n-l'-1)}\\
        &\leq C(n) p^{\alpha'\frac{(n-1)(n-2)}{2}}\\
        & =C(n)p^{\alpha}.
    \end{align*}
    We used the fact that $\frac{\rho_{l'}}{\rho_{l'+1}}=p^{\alpha'}$ and $\alpha'=\frac{2}{(n-1)(n-2)}\alpha$.
\end{proof}

Given $D\in\mathcal{F}^{k}$ for some $1\leq k\leq n-2$, by Lemma \ref{Lemma number of cells}
\begin{align*}
    \mass(D\cap\Sigma^{n-1}) & \leq \rho_{n-2}\#\{1\text{-cells of }\Sigma^{n-2}\text{ contained in }D_{k}\}\\
    & \leq 4\rho_{n-2}\#\{\text{top-cells of }\Sigma^{n-2}\text{ contained in }D\}\\
    & \leq C(n)p^{\alpha}\rho_{n-2}\\
    & \leq C(n)p^{\alpha-\frac{1}{n-1}}
\end{align*}
using that $\rho_{n-2}\leq \rho_{1}=p^{-\frac{1}{n-1}}$. This implies 
\begin{align*}
    \mass(\Sigma^{n-1}) & \leq \#\{\text{top-cells of }\Sigma^{1}\}C(n)p^{\alpha-\frac{1}{n-1}}\\
    & \leq \frac{\Vol_{n-1}(\Sigma)}{\rho_{1}^{n-1}} C(n)p^{\alpha-\frac{1}{n-1}}\\
    & \leq C(n)\Vol_{n-1}(\Sigma)p^{1+\alpha-\frac{1}{n-1}}.
\end{align*}
As $\#\mathcal{A}(x)\leq p$ and 
\begin{equation*}
    \mass(T^{\underline{D}_{k}}_{\underline{i}_{k}}(D_{k}\cap\Sigma^{n-1}))\leq\mass(D_{k}\cap\Sigma^{n-1})\leq C(n)p^{\alpha-\frac{1}{n-1}}
\end{equation*}
for each $(\underline{i}_{k},\underline{D}_{k})\in\mathcal{A}(x)$, we can see
\begin{align*}
    \mass(\Sigma(x))& \leq C(n)\Vol_{n-1}(\Sigma)p^{1+\alpha-\frac{1}{n-1}}+pC(n)p^{\alpha-\frac{1}{n-1}}\\
    & = C(n)(\Vol_{n-1}(\Sigma)+1)p^{1+\alpha-\frac{1}{n-1}}
\end{align*}
which by mod $2$ cancellation implies
\begin{equation*}
    \mass(\Cone_{1}(F(y);\underline{s},\mu(x)))\leq C(n)(\Vol_{n-1}(\Sigma)+1)p^{1+\alpha-\frac{1}{n-1}}.
\end{equation*}

Now we are going to bound $\mass(\partial A(F(y);\underline{s},\mu(x)))$. Recall that $\Sigma^{n}$ denotes the $0$-skeleton of $\Sigma^{n-2}$. Given $1\leq k\leq n-2$ and a top dimensional cell $D$ of $\Sigma^{k}$, let
\begin{equation*}
    \centers(D)=\big (\Sigma^{n}\cap D\big )\cup\bigcup_{l=k}^{n-2}\bigcup_{\substack{E\subseteq D \\ E\in\mathcal{F}^{l}}}\{q_{E}\}.
\end{equation*}
We can see that 
\begin{align*}
    \mass(\centers(D)) & \leq \mass(\Sigma^{n}\cap D)+\sum_{l=k}^{n-2}\#\{E\in\mathcal{F}^{l}:E\subseteq D\}\\
    & \leq 4\#\{E\in\mathcal{F}^{n-2}:E\subseteq D\}+\sum_{l=k}^{n-2}\#\{E\in\mathcal{F}^{l}:E\subseteq D\}\\
    & \leq 5\sum_{l=k}^{n-2}\#\{E\in\mathcal{F}^{l}:E\subseteq D\}\\
    & \leq C'(n)p^{\alpha}.
\end{align*}
where $C'(n)=5(n-2)C(n)$ and in the last step we applied Lemma \ref{Lemma number of cells}. Now as
\begin{equation*}
    \partial A(F(y);\underline{s},\mu(x))=\sum_{I\in\mathcal{I}}T^{\underline{D}_{n-2}}_{\underline{i}_{n-2}}(\partial A(F(y);\Delta_{\underline{I}}\underline{s})_{\underline{D}_{n-2}})
\end{equation*}
and
\begin{equation*}
    \support(\partial A(F(y);\Delta_{\underline{i}}\underline{s})_{\underline{D}_{n-2}})\subseteq\centers(D_{k})
\end{equation*}
in case $I\in \mathcal{I}(x)$ and $\phi(I)=(\underline{i}_{k},\underline{D}_{k})$, and otherwise
\begin{equation*}
    \support(\partial A(F(y);\Delta_{\underline{i}}\underline{s})_{\underline{D}_{n-2}})\subseteq \bigcup_{D\in\mathcal{F}_{1}}\centers(D)
\end{equation*}
we deduce that
\begin{equation*}
    \support(\partial A(F(y);\underline{s},\mu(x)))\subseteq \bigcup_{D\in\mathcal{F}_{1}}\centers(D)\cup\bigcup_{(\underline{i}_{k},\underline{D}_{k})\in\mathcal{A}(x)}T^{\underline{D}_{k}}_{\underline{i}_{k}}(\centers(D_{k}))
\end{equation*}
and hence
\begin{align*}
    \mass(\partial A(F(y);\underline{s},\mu(x))) &\leq C'(n)p^{\alpha}(\#\mathcal{F}^{1}+p)\\ 
    & \leq C'(n)p^{\alpha}(\frac{\Vol_{n-1}(\Sigma)}{\rho_{1}^{n-1}}+p)\\
    & =C'(n)(\Vol_{n-1}(\Sigma)+1)p^{1+\alpha}.
\end{align*}
In summary, we have shown that 

\begin{align}\label{Equations coarea}
     \mass(F'(x)) & \leq \mass(F(x))(1+\beta+\sum_{k=1}^{n-2}k\frac{\rho_{k}}{\varepsilon_{k}})+C(\Sigma)p^{1+\alpha-\frac{1}{n-1}}\\
     \mass(\partial F'(x)) & \leq C(\Sigma)p^{1+\alpha}.
\end{align}

    Using the fact that $\frac{\rho_{k}}{\varepsilon_{k}}\sim p^{-\frac{\alpha'}{2}}$ for every $1\leq k\leq n-2$, we define
    \begin{equation*}
        \gamma_{p}=\beta_{p}+\frac{(n-1)(n-2)}{2}p^{-\frac{\alpha'}{2}}.
    \end{equation*}
    Recall that $\beta_{p}\to 0$ as $p\to\infty$. Therefore 
    \begin{equation*}
        \lim_{p\to\infty}\gamma_{p}=0.
    \end{equation*}
    Theorem \ref{Thm Parametric Coarea} follows from (\ref{Equations coarea}) and these considerations.

\subsection{Extension to piecewise smooth Riemannian manifolds}\label{Extension to triangulable domains}

We now extend the proof of the Parametric Coarea Inequality from rectangular domains to almost $1$-Lipschitz triangulable piecewise smooth Riemannian manifolds. For that purpose, we prove the following result, which is a generalization of Lemma \ref{Lemma map S} to Euclidean polyhedra. We use the same notation as in Section \ref{Section rectangular complexes}.

\begin{proposition}\label{Prop retraction simplicial}
    Let $P$ be a Euclidean polyhedron provided with a triangulation $T$. Then there exist $c=c(P,T)>0$ and a Lipschitz homotopy $\Psi:[0,1)\times P\to P$ such that $\Psi_{0}=id$ and for each $0<\varepsilon<1$ the map $\Psi_{\varepsilon}$ has the following properties:
    \begin{enumerate}
        \item $\Psi_{\varepsilon}$ is piecewise linear and $(1+\varepsilon)$-Lipschitz.
        \item Given a face $F$ of $P$, $\Psi_{\varepsilon}(N_{c\varepsilon} (F))\subseteq F$.
    \end{enumerate}
\end{proposition}

\begin{proof}
    Fix $0<\varepsilon<1$. We will define $\Psi_{\varepsilon}$ on each face of $P$ in a consistent way. First we need to introduce some notation. If $F$ is an $n$-dimensional face of $P$, we denote by $O_{F}$ the center of the $(n-1)$-dimensional sphere where $F$ is circumscribed and we let $r_{F}$ be its radius, so that $|v-O_{F}|=r_{F}$ for every $v\in V(F)$. We denote by $\Pi_{F}$ the $n$-plane which contains $F$, and given $x\in\Pi_{F}$ and $a\in\mathbb{R}$ we denote $a\cdot_{F}x$ the scalar product of the number $a$ and the vector $x$ in vector space $\Pi_{F}$ with the origin at $O_{F}$. We denote by $T_{\varepsilon}^{F}:\Pi_{F}\to\Pi_{F}$ the homotecy $T_{\varepsilon}^{F}(x)=(1+\varepsilon)\cdot_{F} x$ and $F_{\varepsilon}=T_{\varepsilon}^{F}(F)$. We set $\Phi^{F}:\Pi_{F}\to F$ to be the nearest point projection onto $F$, which is a $1$-Lipschitz map. We define $\Psi_{\varepsilon}^{F}=\Phi^{F}\circ T^{F}_{\varepsilon}:F\to F$. We claim that if $G$ is a face of $P$ which contains $F$, then $\Psi^{G}_{\varepsilon}|_{F}=\Psi^{F}_{\varepsilon}$.

    It suffices to prove it when $F$ is a codimension-$1$ face of $G$. In that case, consider $\Pi_{G}$ as a vector space with origin $O_{F}$ and decompose it as $\Pi_{G}=\Pi_{F}\oplus L$ where $L$ is the line through $O_{F}$ and $O_{G}$ (which is perpendicular to $\Pi_{F}$). This allows to represent each point in $\Pi_{G}$ uniquely as a pair $(y,s)$ where $y\in\Pi_{F}$ and $s\in\mathbb{R}$ are chosen so that $O_{F}=(0,0)$ and $O_{G}=(0,D)$, where $D$ is the distance between $O_{F}$ and $O_{G}$. Observe that using this coordinates and the corresponding scalar product $\cdot _{F}$, if $x=(y,s)$ is a point in $\Pi_{G}$
    \begin{align*}
        T^{G}_{\varepsilon}(x) & =O_{G}+(1+\varepsilon)\cdot_{F}(x-O_{G})\\
        & =-\varepsilon O_{G}+(1+\varepsilon)\cdot_{F}(y,s)\\
        & =(T^{F}_{\varepsilon}(y),(1+\varepsilon)s-sD).
    \end{align*}
    In particular, when $y\in\Pi_{F}$ as $s=0$ it holds
    \begin{equation*}
        T^{G}_{\varepsilon}(y)=(T_{\varepsilon}^{F}(y),-D\varepsilon).
    \end{equation*}
    We can use the previous to show that if $y\in F$ then $\Psi^{G}_{\varepsilon}(y)=\Psi^{F}_{\varepsilon}(y)$. Denote $y'=T^{F}_{\varepsilon}(y)$ and $y''=T^{G}_{\varepsilon}(y)$, which are related by
    \begin{equation*}
        y''=(y',-D\varepsilon).
    \end{equation*}
    Then as $G\subseteq\{(y,s):y\in F,s\geq 0\}$, we can show that the nearest point projection $y^{*}$ of $y'$ onto $F$ equals the nearest point projection of $y''$ onto $G$. This is because $y^{*}\in G$ and if $z=(\overline{y},s)\in G$, as $\overline{y}\in F$ and $s\geq 0$,
    \begin{align*}
        |y''-z| & =\sqrt{|y'-\overline{y}|^{2}+|s+D\varepsilon|^{2}}\\
        & \geq \sqrt{|y'-y^{*}|^{2}+D^{2}\varepsilon^{2}}\\
        & = |(y',-D\varepsilon)-(y^{*},0)|\\
        & = |y''-y^{*}|.
    \end{align*}
    Therefore $\Phi^{F}(y')=\Phi^{G}(y'')$ which by definition implies that $\Psi^{F}_{\varepsilon}(y)=\Psi^{G}_{\varepsilon}(y)$. This proves that there exists a well defined $(1+\varepsilon)$-Lipschitz map $\Psi_{\varepsilon}:P\to P$ such that $\Psi_{\varepsilon}|_{F}=\Psi^{F}_{\varepsilon}$ for every face $F$ of $P$. 
    
    The previous argument allows to prove by induction that the nearest point projection onto a linear simplex is piecewise linear. Indeed, this is clear for $1$-dimensional simplices, and given an $n$-dimensional simplex $G$, if we write $\Pi_{G}$ as the union of the cones $C_{F}$ centered at $O_{G}$ over $F$ for each $(n-1)$-dimensional face $F$, we can see that in each $C_{F}\setminus G$ it holds
    \begin{equation*}
        \Psi^{G}=\Phi^{F}\circ P^{GF}
    \end{equation*}
    where $P^{GF}:\Pi^{G}\to\Pi_{F}$ is the orthogonal projection onto $\Pi_{F}$.
    
    We observe that by definition
    \begin{equation*}
        \Psi^{F}(\varepsilon,x)=\Phi^{F}((1+\varepsilon)\cdot_{F} x)
    \end{equation*}
    and hence $\Psi$ is a Lipschitz function of the two variables $(\varepsilon,x)$.
    Let $\tilde{c}$ be the minimum of $|O_{F}-O_{G}|$ over all pair of faces $F,G$ of $P$ with $F\subseteq G$, $\dim(F)=\dim(P)-1$ and $\dim(G)=\dim(P)$ ($\tilde{c}=\tilde{c}(T)$ plays for Euclidean polyhedra the role that the width $\rho$ of a rectangular structure $T$ played for cubical and rectangular complexes, see Definitions \ref{Def width cubical complex} and \ref{Def width rectangular complex}) . Let $x\in G$ be such that $\dist(x,F)\leq \frac{D\varepsilon}{1+\varepsilon}$, $\dim(G)=\dim(P)$. We can write $x=(y,s)$ with $y\in F$ and $s\in[0,\frac{D\varepsilon}{1+\varepsilon})]$. Thus 

    \begin{align*}
        \Psi^{G}_{\varepsilon}(x) & =\Phi^{G}(T_{\varepsilon}^{F}(y),s(1+\varepsilon)-D\varepsilon))\\
        & =\Psi_{\varepsilon}^{F}(y)
    \end{align*}
    because $s(1+\varepsilon)-D\varepsilon\leq 0$. Then $\Psi^{G}_{\varepsilon}(x)\in F$, which proves (2) when $F$ is a codimension-$1$ face of $P$ if we set $c=\frac{\tilde{c}}{2}$ (because $c\varepsilon=\frac{\tilde{c}}{2}\varepsilon\leq\frac{\tilde{c}\varepsilon}{1+\varepsilon}\leq \frac{D\varepsilon}{1+\varepsilon}$). For $F$ of arbitrary dimension, this follows from writing $F$ as a finite intersection of codimension-$1$ faces $F_{1},...,F_{k}$ and observing that if $x\in N_{c\varepsilon}(F)$ then $x\in N_{c\varepsilon} (F_{i})$ for $i=1,...,k$ and hence $\Psi_{\varepsilon}(x)\in\bigcap_{i=1}^{k} F_{i}=F$.
    
\end{proof}

Using Proposition \ref{Prop retraction simplicial} instead of Lemma \ref{Lemma map S}, we can adapt the proof of the Parametric Coarea Inequality for cubical domains to compact PL submanifolds with boundary of $\mathbb{R}^{n}$. In order to extend it to almost $1$-Lipschitz triangulable piecewise smooth Riemannian manifolds with boundary $(M^{n},g)$, we proceed as follows. Given $\varepsilon=\frac{1}{m}$, there exists a compact PL manifold with boundary $(P_{m},g_{m})$ and a $(1+\frac{1}{m})$-bilipschitz map $\Xi_{m}:(P_{m},g_{m})\to (M^{n},g)$. Let $p_{m}$ be such that if $p\geq p_{m}$ and $F:X^{p}\to\mathcal{Z}_{1}(P_{m},\partial P_{m})$ then we have a perturbed map $F':X^{p}\to\mathcal{Z}_{1}(P_{m},\partial P_{m})$ with the properties in Theorem \ref{Thm Parametric Coarea}. We can assume that $(p_{m})_{m\in\mathbb{N}}$ is an increasing sequence. Given $p\geq p_{m}$ and $F:X^{p}\to\mathcal{Z}_{1}(M,\partial M)$, there exists $F':X^{p}\to\mathcal{Z}_{1}(M,\partial M)$ which is homotopic to $F$ and verifies
\begin{enumerate}
    \item $\mass(F'(x))\leq (1+\frac{1}{m})^{2}[\mass(F(x))(1+\gamma_{p})+c(n)\Vol_{n-1}(\Sigma)p^{\frac{n-2}{n-1}+\alpha}]$.
    \item $\mass(\partial F'(x))\leq c(n)\Vol_{n-1}(\Sigma)p^{1+\alpha}$.
\end{enumerate}
Given $p\geq p_{1}$, let $m_{p}=\max\{m:p_{m}\leq p\}$. The previous implies that for every $p\geq p_{1}$, if $F:X^{p}\to\mathcal{Z}_{1}(M,\partial M)$ is a continuous family, there exists another family $F'$ which is arbitrarily close to $F$ in the flat topology and verifies
\begin{enumerate}
    \item $\mass(F'(x))\leq (1+\frac{1}{m_{p}})^{2}[\mass(F(x))(1+\gamma_{p})+c(n)\Vol_{n-1}(\Sigma)p^{\frac{n-2}{n-1}+\alpha}]$.
    \item $\mass(\partial F'(x))\leq c(n)\Vol_{n-1}(\Sigma)p^{1+\alpha}$
\end{enumerate}
which implies that Theorem \ref{Thm Parametric Coarea} holds for $(M,g)$ by replacing $1+\gamma_{p}$ by $(1+\frac{1}{m_{p}})^{2}(1+\gamma_{p})$, which still converges to $1$ as $p\to\infty$.

\bibliography{Bibliography}
\bibliographystyle{amsplain}

\newcommand{\Addresses}{{% additional braces for segregating \footnotesize
  \bigskip
  \footnotesize

  \textsc{Department of Mathematics, Rice University, Houston, TX 77005, USA}\par\nopagebreak
  \textit{Email address}: \texttt{bruno.staffa@rice.edu}
  }}

\Addresses

\end{document}